\declaretheoremstyle[
spaceabove=\topsep, spacebelow=\topsep,
headfont=\normalfont\bfseries,
notefont=\mdseries, notebraces={(}{)},
bodyfont=\itshape,
postheadspace=\newline
]{break}
\declaretheoremstyle[
spaceabove=\topsep, spacebelow=\topsep,
headfont=\normalfont\bfseries,
notefont=\mdseries, notebraces={}{},
bodyfont=\itshape,
postheadspace=\newline
]{refbreak}
\declaretheorem[title=Theorem, style=plain, numberwithin=section]{thm}
\declaretheorem[title=Proposition, style=plain, numberlike=thm]{prop}
\declaretheorem[title=Lemma, style=plain, numberlike=thm]{lem}
\declaretheorem[title=Corollary, style=plain, numberlike=thm]{cor}
\declaretheorem[title=Theorem, style=break, numberlike=thm]{thmlab}
\declaretheorem[title=Lemma, style=break, numberlike=thm]{lemlab}
\declaretheorem[title=Corollary, style=break, numberlike=thm]{corlab}
\declaretheorem[title=Conjecture, style=break, numberlike=thm]{conj}
\declaretheorem[title=Proposition, style=refbreak, numberlike=thm]{propreflab}
\declaretheorem[title=Lemma, style=refbreak, numberlike=thm]{lemreflab}
\declaretheorem[title=Definition, style=definition, numberlike=thm]{defn}
\declaretheorem[title=Remark, style=remark, numberlike=thm]{remark}
\declaretheorem[title=Claim, style=plain, numbered=no]{claim}
\declaretheorem[title=Example, style=remark, numberlike=thm]{ex}
\crefname{thm}{Theorem}{Theorems}
\crefname{prop}{Proposition}{Propositions}
\crefname{lem}{Lemma}{Lemmata}
\crefname{cor}{Corollary}{Corollaries}
\crefname{rem}{Reminder}{Reminders}
\crefname{defn}{Definition}{Definitions}
\crefname{thmlab}{Theorem}{Theorems}
\crefname{proplab}{Proposition}{Propositions}
\crefname{lemlab}{Lemma}{Lemmata}
\crefname{corlab}{Corollary}{Corollaries}
\crefname{remlab}{Reminder}{Reminders}
\crefname{conj}{Conjecture}{Conjectures}
\crefname{thmreflab}{Theorem}{Theorems}
\crefname{propreflab}{Proposition}{Propositions}
\crefname{lemreflab}{Lemma}{Lemmata}
\crefname{correflab}{Corollary}{Corollaries}
\crefname{remreflab}{Reminder}{Reminders}
\crefname{conjref}{Conjecture}{Conjectures}
\crefname{remark}{Remark}{Remarks}
\crefname{claim}{Claim}{Claims}
\crefname{ex}{Example}{Examples}
\def\clap#1{\hbox to 0pt{\hss#1\hss}}
\def\underbracket{%
    \@ifnextchar[{\@underbracket}{\@underbracket [\@bracketheight]}%
}
\def\@underbracket[#1]{%
    \@ifnextchar[{\@under@bracket[#1]}{\@under@bracket[#1][0.4em]}%
}
\def\@under@bracket[#1][#2]#3{
    \mathop{\vtop{\m@th \ialign {##\crcr $\hfil \displaystyle {#3}\hfil $%
    \crcr \noalign {\kern 3\p@ \nointerlineskip }\upbracketfill {#1}{#2}
    \crcr \noalign {\kern 3\p@ }}}}\limits}
\def\upbracketfill#1#2{$\m@th \setbox \z@ \hbox {$\braceld$}
    \edef\@bracketheight{\the\ht\z@}\bracketend{#1}{#2}
    \leaders \vrule \@height #1 \@depth \z@ \hfill
    \leaders \vrule \@height #1 \@depth \z@ \hfill \bracketend{#1}{#2}$}
\def\bracketend#1#2{\vrule height #2 width #1\relax}
\def\thmt@refnamewithcomma #1#2#3,#4,#5\@nil{%
  \@xa\def\csname\thmt@envname #1utorefname\endcsname{#3}%
  \ifcsname #2refname\endcsname
    \csname #2refname\expandafter\endcsname\expandafter{\thmt@envname}{#3}{#4}%
  \fi
}
\newcommand*\rel@kern[1]{\kern#1\dimexpr\macc@kerna}
\newcommand*\widebar[1]{%
  \begingroup
  \def\mathaccent##1##2{%
    \rel@kern{0.8}%
    \overline{\rel@kern{-0.8}\macc@nucleus\rel@kern{0.2}}%
    \rel@kern{-0.2}%
  }%
  \macc@depth\@ne
  \let\math@bgroup\@empty \let\math@egroup\macc@set@skewchar
  \mathsurround\z@ \frozen@everymath{\mathgroup\macc@group\relax}%
  \macc@set@skewchar\relax
  \let\mathaccentV\macc@nested@a
  \macc@nested@a\relax111{#1}%
  \endgroup
}
\newcommand{\subjclass}[2][1991]{%
  \let\@oldtitle\@title%
  \gdef\@title{\@oldtitle\footnotetext{#1 \emph{Mathematics subject classification.} #2}}%
}
\newcommand{\keywords}[1]{%
  \let\@@oldtitle\@title%
  \gdef\@title{\@@oldtitle\footnotetext{\emph{Key words and phrases.} #1.}}%
}
\DeclareMathOperator{\Hom}{Hom}
\DeclareMathOperator{\im}{im}
\DeclareMathOperator{\id}{id}
\DeclareMathOperator{\inkl}{\iota}
\DeclareMathOperator{\pr}{pr}
\newcommand{\cat}[1]{\ensuremath{\mathcal{#1}}}
\newcommand{\ideal}[1]{\ensuremath{\mathfrak{#1}}}
\newcommand{\obj}[1]{\ensuremath{\in \cat{#1}}}
\newcommand{\sbim}{\ensuremath{\cat{B}}}
\newcommand{\bsbim}{\ensuremath{\cat{BS}}}
\newcommand{\rgrbim}{\ensuremath{R\text{-grmod-}R}}
\newcommand{\Kb}{\ensuremath{K^{b}(\sbim)}}
\newcommand{\br}[1][]{\ensuremath{Br_{#1}}}
\newcommand{\brm}[1][]{\ensuremath{Br_{#1}^{+}}}
\newcommand{\brred}[1][]{\ensuremath{Br_{#1}^{\text{red}}}}
\newcommand{\brcat}{\ensuremath{2-\cat{B}r}}
\newcommand{\heck}[1][]{\ensuremath{\mathbf{H}_{#1}}}
\newcommand{\std}[1]{\ensuremath{H_{#1}}}
\newcommand{\kl}[1]{\ensuremath{\underline{H}_{#1}}}
\newcommand{\desc}[1]{\ensuremath{\mathcal{#1}}}
\newcommand{\leqcell}[1]{\ensuremath{\underset{#1}{\leq}}}
\newcommand{\lcell}[1]{\ensuremath{\underset{#1}{<}}}
\newcommand{\cellUniqueRex}{\ensuremath{C}}
\newcommand{\cellNonUniqueRex}{\ensuremath{\widetilde{C}}}
\newcommand{\lcellFixedRightDesc}[1]{\ensuremath{\cellUniqueRex}_{#1}}
\newcommand{\elFixedLeftDesc}[1]{\ensuremath{\prescript{}{#1}{w}}}
\newcommand{\catFixedRightDesc}[1]{\ensuremath{\cat{C}_{#1}}}
\newcommand{\Z}{\ensuremath{\mathbb{Z}}}
\newcommand{\N}{\ensuremath{\mathbb{N}}}
\newcommand{\R}{\ensuremath{\mathbb{R}}}
\newcommand{\defeq}{\ensuremath{\coloneqq}}
\newcommand{\pH}[1]{\ensuremath{\prescript{p}{}{\mathcal{H}}^{#1}}}
\newcommand{\G}[1]{\ensuremath{\mathfrak{K}^0(\cat{#1})}}
\newcommand{\Galg}[1]{\ensuremath{\mathfrak{K}^0(#1)}}
\newcommand{\tauleq}[1]{\ensuremath{\prescript{p}{}{\tau}_{\leqslant #1}}}
\newcommand{\taul}[1]{\ensuremath{\prescript{p}{}{\tau}_{< #1}}}
\newcommand{\taugeq}[1]{\ensuremath{\prescript{p}{}{\tau}_{\geqslant #1}}}
\newcommand{\taug}[1]{\ensuremath{\prescript{p}{}{\tau}_{> #1}}}
\newcommand{\KCleq}[1]{\ensuremath{\prescript{p}{}{K^{b}}(\cat{C})^{\leqslant #1}}}
\newcommand{\KCgeq}[1]{\ensuremath{\prescript{p}{}{K^{b}}(\cat{C})^{\geqslant #1}}}
\newcommand{\Address}{
  \bigskip{\footnotesize

  \textsc{Max Planck Institute for Mathematics, Vivatsgasse 7,
  53111 Bonn}\par\nopagebreak
  \textit{E-mail address}, Lars~Thorge~Jensen: \texttt{ltjensen@mpim-bonn.mpg.de}
}}
\tikzset{%
  highlight/.style={rectangle,rounded corners,fill=red!15,draw=red,
    fill opacity=0.5,thick},
  bendBelow/.style={bend left=70, looseness=2},
  bendAbove/.style={bend right=70, looseness=2},
  object/.style={circle, fill, inner sep=1.5pt, outer sep=0mm},
  labeling/.style={outer sep=0mm, inner sep=0mm},
  1morph/.style={->, shorten >= 0.5pt, >=stealth'},
  2morph/.style={-implies,double,double equal sign distance,
                 shorten >=2pt, shorten <=3pt},
  spot/.style={color=black, thin, dashed},
  sline/.style={color=blue, line width=2pt},
  tline/.style={color=red, line width=2pt},
  uline/.style={color=green, line width=2pt},
  randline/.style={color=pink, line width=2pt},
  sdot/.style={color=blue, thin, fill},
  tdot/.style={color=red, thin, fill},
  udot/.style={color=green, thin, fill},
  randdot/.style={color=pink, thin, fill}
}
\newcommand{\tikzmark}[2]{\tikz[overlay,remember picture,
  baseline=(#1.base)] \node (#1) {#2};}
\newcommand{\DrawBox}[1][submatrix]{%
    \tikz[overlay,remember picture]{
    \node[highlight,fit=(left.north west) (right.south east)] (#1) {};}
}
\newcommand{\BigFig}[1]{\parbox{12pt}{\Huge #1}}
\newcommand{\BigZero}{\BigFig{0}}
\begin{document}

\title{The 2-braid group and \\ Garside normal form}
\author{Lars Thorge Jensen}
\subjclass[2000]{Primary 20F36, 20J05}
\keywords{categorification, braid groups, Rouquier complex, Soergel bimodules, 
         Garside normal form, 2-braid group}
\date{} 

\maketitle


\begin{abstract}
We investigate the relation between the Garside normal form for positive braids and
the $2$-braid group defined by Rouquier. Inspired by work of Brav and Thomas we show that the Garside
normal form is encoded in the action of the $2$-braid group on a certain categorified 
left cell module. This allows us to deduce the faithfulness of the $2$-braid group 
in finite type. We also give a new proof of Paris' theorem that the canonical map 
from the generalized braid monoid to its braid group is injective in arbitrary 
type.
\end{abstract}

\section{Introduction}
\label{secIntro}
The braid group is ubiquitous not only in knot theory, but also in topology,
algebraic geometry and representation theory. Experience tells
that the action of the Coxeter group (or its corresponding Hecke algebra) on 
the level of Grothendieck groups can often be upgraded to a braid group action 
on the underlying category. Examples of this phenomenon in representation theory
can be seen in \cite{Ro}, \cite{BR} and \cite{CR}. Rouquier suggests that not
only the self-equivalences of the action are important, but also the morphisms
between them possess some interesting structure. Thus he introduced in \cite{Ro}
the $2$-braid group as a concrete home to study these morphisms. The $2$-braid
group lives in the homotopy category of Soergel bimodules, but has many other
incarnations (translation functors on Bernstein-Gelfand-Gelfand
category $\mathcal{O}$, convolution functors, spherical twists, \dots) as well.

The $2$-braid group is a fundamental mathematical object. Its importance was 
underlined by its applications in categorified link invariants. Just as many
knot invariants factor over the braid group, the $2$-braid group can be used
to construct triply-graded HOMFLY-PT homology (see \cite{Kh}).

The more natural a categorification is, the more structure of the underlying categorified mathematical
object it reflects. Motivated by work of Brav and Thomas we looked for shadows of the 
Garside normal form of the positive braid monoid in the $2$-braid group. It is 
remarkable that on the categorical level the Garside normal form only becomes apparent
after acting on a certain categorified left cell module for the Hecke algebra
(see \cref{thmGarsideAct}). From this we can deduce
the faithfulness of the $2$-braid group in finite type (see \cref{corFaithfulness}) 
as conjectured by Rouquier in \cite{Ro}. Following Rouquier's philosophy a categorified 
braid group action should encode 
a lot of information about the category acted upon. For this purpose proving 
the faithfulness of the $2$-braid group is a basic question.The faithfulness 
follows in type $A$ from work by Khovanov and Seidel (see \cite{KS}) and
in simply-laced, finite type from results by Brav and Thomas (see \cite{BT}). 
The shadows of the Garside normal form also enable us to give a new proof of 
Paris' theorem that the canonical map from the generalized braid monoid to 
its braid group is an injection in arbitrary type (see \cite{Pa} and \cref{corCanInject}).

The following example illustrates that the categorified braid group action
contains strictly more information than its decategorification.
On the one hand, the categorified left cell module we construct admits a faithful
braid group action (see \cref{thmActFaithful}). On the other hand, it gives a 
categorification of a twisted reduced Burau representation in type $A_{n-1}$ 
(see \cref{exDecatNonFaithful}) which is known not to be faithful for 
$n \geqslant 5$ (see \cite{Bi}).

\subsection {Structure of the paper}
\begin{description}
   \item[\Crefrange{secHeckeAlg}{secGenBraidGroups}] We introduce notation and recall important results about
         the Hecke algebra, cells with respect to the Kazhdan-Lusztig basis, Soergel bimodules, generalized
         braid groups and the $2$-braid group.
   \item[\Cref{secHomFormCons}] Using Soergel's $\Hom$-formula we study the existence of degree $1$ morphisms
         between indecomposable Soergel bimodules and rewrite the multiplication formula for the Kazhdan-Lusztig
         basis in our setting.
   \item[\Cref{secCellModCat}] We show that cell modules of the Hecke algebra can be categorified by mimicking
         the construction on the category of Soergel bimodules.
   \item[\Cref{secPervDef}] We introduce the perverse filtration on the homotopy category of Soergel bimodules
         and recall some important results.
   \item[\Cref{secProof}] After introducing the important notion of an anchor we prove our main results.
\end{description}

\subsection{Acknowledgement}

I would like to thank my advisor, Geordie Williamson, for his support and
encouragement. I am grateful to Hanno Becker for very valuable discussions.

\subsection{The Hecke algebra and cells}
\label{secHeckeAlg}

Let $(W,S)$ be a \emph{Coxeter system}, i.e. $W$ is a group together with a set of generators $S$
admitting a particularly nice presentation:
\begin{equation*}
   W = \; \langle s \in S \; \vert \; \underbrace{sts\dots}_{m_{s,t} \text{ terms}} = 
      \underbrace{tst\dots}_{m_{s,t} \text{ terms}} \text{, } s^2 = 1 \rangle
\end{equation*}
where $m_{s,t} \geqslant 2$ is the order of $st$ for $s \neq t \in S$. For $w \in W$ 
denote the left (resp. right) descent set of $w$ by $\desc{L}(w) = \{ s \in S \; \vert \; sw < w \}$ 
(resp. $\desc{R}(w) = \{ s \in S \; \vert \; ws < w \}$).

Let $\heck[(W, S)]$ be the corresponding \emph{Hecke algebra} over $\Z[v, v^{-1}]$ which
we will also denote by $\heck$ if there is no danger of confusion.

Denote by $\{\std{w}\}_{w \in W}$ the \emph{standard} and by $\{\kl{w}\}_{w\in W}$ the 
\emph{Kazhdan-Lusztig basis} in Soergel's normalization (see \cite{S2}). Since we are not working with 
the usual normalization, let us state the relations for the standard basis:
\begin{alignat*}{2}
   \std{s}^2 &= (v^{-1} - v) \std{s} + 1  \qquad && \text{for all } s \in S \text{,} \\
   \underbrace{\std{s} \std{t} \std{s} \dots}_{m_{s, t} \text{ terms}} &= 
      \underbrace{\std{t} \std{s} \std{t} \dots}_{m_{s,t} \text{ terms}} 
      && \text{for all } s \neq t \in S \text{.}
\end{alignat*}

Write $\kl{x} = \sum_{y \leqslant x} h_{y, x} \std{y}$ where $h_{y, x} \in \Z[v]$
are the \emph{Kazhdan-Lusztig polynomials} up to simple renormalization (see \cite{S2}). 
From the defining property of the Kazhdan-Lusztig basis, one sees immediately 
$h_{x, x} = 1$ and $h_{y, x} \in v\Z[v]$ for all $y < x$. Define $\mu(y, x)$ 
for $y \leqslant x \in W$ as the coefficient of $v$ in $h_{y, x}$. We extend 
this definition of $\mu$ as follows. Set $\mu(x, y) \defeq \mu(y, x)$ if $y < x$
and $\mu(x, y) = 0$ if $x$ and $y$ are incomparable in the Bruhat order.

There is a unique $\Z$-linear involution $\widebar{(-)}$ on $\heck$ satisfying
$\widebar{v} = v^{-1}$ and $\widebar{\std{s}} = \std{s}^{-1}$ for $s \in S$ and 
thus $\widebar{\std{x}} = \std{x^{-1}}^{-1}$. 
The Kazhdan-Lusztig basis element $\kl{x}$ is the unique element in $\std{x} + 
\sum_{y < x} v\Z[v] H_y$ that is invariant under $\widebar{(-)}$. 

Moreover, there is a $\Z[v, v^{-1}]$-linear anti-involution $\iota$ on 
$\heck$ satisfying $\iota(\std{s}) = \std{s}$ for $s \in S$ and thus $\iota(\std{x}) = \std{x^{-1}}$
and a $\Z$-linear anti-involution $\omega$ on $\heck$ satisfying
$\omega(v) = v^{-1}$ and $\omega(\kl{s}) = \kl{s}$ for all $s \in S$.

Recall that a \emph{trace} on $\heck$ is a $\Z[v, v^{-1}]$-linear map
$\varepsilon: \heck \rightarrow \Z[v, v^{-1}]$ satisfying $\varepsilon(h h') =
\varepsilon(h' h)$ for all $h, h' \in \heck$. A calculation shows that the $\Z[v, v^{-1}]$-linear map
$\varepsilon$ satisfying $\varepsilon(\std{x}) = \delta_{x, 1}$, where $\delta_{x, 1}$ is the Kronecker symbol,
is a trace. This map is called the \emph{standard trace}.

Using the standard trace and the $\Z$-linear anti-involution $\omega$, we can now define
the \emph{standard pairing} $(-, -): \heck \times \heck \rightarrow \Z[v, v^{-1}]$ via
$(h, h') = \varepsilon(\omega(h)h')$ for all $h, h'\in \heck$. Note that this
pairing is $\Z[v, v^{-1}]$-semilinear, i.e. $(v^{-1}h, h') = (h, vh') = v(h, h')$
for all $h, h' \in \heck$. In addition, $\kl{s}$ is self-biadjoint with respect to
this pairing: $(\kl{s} h, h') = (h, \kl{s} h')$ and $(h \kl{s}, h') = (h, h' \kl{s})$ 
for all $h, h' \in \heck$.

Denote the \emph{left} (resp. \emph{right} or \emph{two-sided}) \emph{cell preorder} with respect
to the Kazhdan-Lusztig basis by $\leqcell{L}$ (resp. $\leqcell{R}$ or $\leqcell{LR}$).
Recall that $\leqcell{L}$ is generated by the relation $v \leqcell{L} w$ for $v,w \in W$ 
if $\kl{v}$ occurs with non-zero coefficient in $h \kl{w}$ for some $h \in \heck$. Observe
that this definition can be generalized to give a preorder on the indexing set of a
basis of any based $R$-algebra.

For $w \in W$ let $\heck( \leqcell{L} w)$ (resp. $\heck( \lcell{L} w)$) be the 
$\Z[v, v^{-1}]$-span of all basis elements $\kl{v}$ such that $v \leqcell{L} w$ 
(resp. $v \lcell{L} w$). Use a similar notation for any left cell instead of $w$ and for the other
cell preorders. We will be interested in the best understood situation, namely 
a left cell inside the two-sided cell of all non-trivial elements with a unique 
reduced expression:

Let $\cellUniqueRex$ be the set of all elements in $W \setminus \{id\}$ with a unique reduced expression and set 
$ \lcellFixedRightDesc{s} \defeq \{ w \in \cellUniqueRex \; \vert \; ws < w\}$. Obviously the sets
$\lcellFixedRightDesc{s}$ for $s \in S$ form a partition of $\cellUniqueRex$.

\begin{propreflab}[{\cite[Proposition 3.8]{Lu1}}]
   Assume $(W, S)$ to be irreducible. Then:
   \begin{enumerate}
      \item $\lcellFixedRightDesc{s}$ is a left cell in $\heck$ for all $s \in S$.
      \item $\cellUniqueRex$ is a two-sided cell in $\heck$.
   \end{enumerate}
\end{propreflab}

To visualize the left cell $\lcellFixedRightDesc{s}$ for $s \in S$ define an undirected 
graph $\Gamma_s$: The vertex set is given by $\lcellFixedRightDesc{s}$ and the edge 
set contains an edge $\{x, y\}$ if $x^{-1}y$ lies in $S$. Define a map 
$\pi_s: \lcellFixedRightDesc{s} \rightarrow S$ by sending an element 
$w \in \lcellFixedRightDesc{s}$ to the unique element in its left descent set 
$\desc{L}(w)$.

\begin{lemreflab}[{\cite[Proposition 3.8]{Lu1}}]
   \label{lemGammaSFacts}
   Assume $(W, S)$ to be irreducible. For any $s \in S$:
   \begin{enumerate}
      \item \label{itmGammaSTree}
            $\Gamma_s$ is a tree.
      \item \label{itmPiSBij}
            $\pi_s$ defines an isomorphism between $\Gamma_s$ and the Coxeter graph of $(W, S)$ if and only
            if the latter is a simply-laced tree.
      \item \label{itmGammaSEdge}
            For $x, y \in \lcellFixedRightDesc{s}$ the subset $\{x, y\}$ is an edge of $\Gamma_s$ if and only if
            $\desc{L}(x) \neq \desc{L}(y)$ and $\mu(x, y) \neq 0$. In this case we have $\mu(x,y) = 1$.
   \end{enumerate}
\end{lemreflab}

\subsection{Soergel bimodules}

Let $\mathfrak{h} = \bigoplus_{s \in S} \R \alpha_s^{\vee}$ be the \emph{reflection representation}
of $(W, S)$ and define the simple roots $\{ \alpha_s \; \vert \; s \in S \} \subset \mathfrak{h}^{\ast}$ via:
\begin{equation} \label{eqnRootCorootRel}
   \langle \alpha_s^{\vee}, \alpha_t \rangle = -2\cos(\frac{\pi}{m_{s, t}})
\end{equation}
(This gives a \emph{symmetric realization} in the sense of \cite[Definition 3.1]{EW2}.)

Denote by $R = S(\mathfrak{h}^{\ast})$ the symmetric algebra on $\mathfrak{h}^{\ast}$, 
viewed as a graded algebra with $\deg(\mathfrak{h}^{\ast}) = 2$. Since 
$W$ acts on $\mathfrak{h}^{\ast}$ via the contragredient representation 
($s(\gamma) = \gamma - \langle \alpha_s^{\vee}, \gamma \rangle \alpha_s$ for all 
$\gamma \in \mathfrak{h}^{\ast}$), we can extend this to an action of $W$ on $R$ 
by degree-preserving algebra automorphisms.

Denote by $\rgrbim$ the abelian, monoidal category of $\Z$-graded $R$-bimodules 
that are finitely generated as left and as right $R$-modules with degree-preserving
bimodule homomorphisms as morphisms. Given a graded $R$-bimodule $M = \bigoplus_{i \in \Z} 
M^{i}$ we denote by $M(1)$ the $R$-bimodule with the grading shifted down by one: 
$M(1)^i = M^{i+1}$. For any two graded $R$-bimodules $M$ and $N$ denote by 
$\Hom^{\bullet}(M, N) \defeq \bigoplus_{n \in \Z}\Hom_{\rgrbim}(M, N(n))$ the bimodule 
homomorphisms from $M$ to $N$ of all degrees.

For $s \in S$ let $R^s \subseteq R$ be the subring of invariants under the action of $s$
and define the $R$-bimodule $B_s = R \otimes_{R^s} R(1)$.

The \emph{category of Bott-Samelson bimodules}, denoted by $\bsbim$, is the full 
additive, monoidal subcategory of $\rgrbim$ generated by the $B_s$ for $s \in S$ 
and their grading shifts. For an expression $\underline{w} = s_1 s_2 \cdots s_k$ 
with $s_i \in S$ for all $1 \leqslant i \leqslant k$ the graded 
$R$-bimodule $B_{\underline{w}} = B_{s_1} \otimes B_{s_2} \otimes \cdots 
\otimes B_{s_k}$ is called a \emph{Bott-Samelson bimodule}. 
We usually omit all tensor products and thus $B_{\underline{w}}$ is 
written as $B_{s_1} B_{s_2} \cdots B_{s_k}$. Define \emph{the category 
of Soergel bimodules} $\sbim$ to be the Karoubi envelope of $\bsbim$. In other words, an 
indecomposable Soergel bimodule is a direct summand of a shifted Bott-Samelson bimodule 
and the morphisms between Soergel bimodules are degree-preserving. The following result is
well known (see \cite[Lemma 6.24]{EW2}):

\begin{lem}
   \label{lemKrullSchmidt}
   The category of Soergel bimodules $\sbim$ is a Krull-Schmidt category with \
   finite dimensional $\Hom$-spaces.
\end{lem}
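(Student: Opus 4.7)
The plan is to establish the lemma in two stages: first, finite-dimensionality of the morphism spaces, and then Krull-Schmidt as a formal consequence for Karoubian additive categories over a field with finite-dimensional morphism spaces. I want to avoid appealing to Soergel's $\Hom$-formula, since that is typically proved using Krull-Schmidt.

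For the first stage, I would show that every Bott-Samelson bimodule is free of finite rank as a left $R$-module. Since $R$ is free of rank $2$ over $R^s$, the bimodule $B_s = R \otimes_{R^s} R(1)$ is free of rank $2$ on the left, and iterating the tensor product shows that $B_{\underline{w}} = B_{s_1}\cdots B_{s_k}$ is free of rank $2^k$ as a left $R$-module. By definition every object of $\sbim$ is a direct summand of a shifted Bott-Samelson bimodule, hence finitely generated (in fact projective) as a left $R$-module. For Soergel bimodules $M, N \obj{\sbim}$ the graded bimodule homomorphism space $\Hom^{\bullet}(M, N)$ embeds into $\Hom_{R-}(M,N)$, which is finitely generated as a graded right $R$-module because $M$ is finitely generated on the left. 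Since $R$ is positively graded with $R^0 = \R$ and each graded piece of $R$ is finite-dimensional over $\R$, passing to the degree-zero part gives that $\Hom_{\rgrbim}(M, N)$ is finite-dimensional over $\R$, as desired.

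For the second stage, I would invoke the general fact that any $\R$-linear, additive, idempotent-complete category with finite-dimensional $\Hom$-spaces is Krull-Schmidt. For every $M \obj{\sbim}$ the endomorphism ring $\End(M)$ is a finite-dimensional $\R$-algebra, hence semi-perfect: a complete orthogonal system of primitive idempotents exists and lifts modulo the radical. Because $\sbim$ is the Karoubi envelope of $\bsbim$, these idempotents split, so $M$ decomposes as a finite direct sum of summands whose endomorphism rings are quotients of local finite-dimensional algebras, hence local. Uniqueness of the decomposition up to reordering and isomorphism is then automatic from the local endomorphism ring property (Azumaya's theorem).

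The main obstacle is the first stage, and specifically ensuring that the argument is not secretly circular. Much of the standard machinery for $\sbim$ (the $\Hom$-formula, classification of indecomposables, character theory) already presupposes Krull-Schmidt, so the care lies in extracting finite-dimensionality of morphism spaces using only the explicit left-$R$-module structure of Bott-Samelson bimodules and the gradedness of $R$, before any appeal to deeper structural results.
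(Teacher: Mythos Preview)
Your argument is correct and is essentially the standard one. The paper itself does not prove this lemma but simply records it as well known and cites \cite[Lemma 6.24]{EW2}; the argument there is exactly your two-stage approach: finite-dimensionality of $\Hom$-spaces from the explicit left-$R$-freeness of Bott-Samelson bimodules together with the positive grading on $R$, followed by the general principle that an idempotent-complete $\R$-linear additive category with finite-dimensional morphism spaces is Krull-Schmidt. Your caution about avoiding circularity with Soergel's $\Hom$-formula is well founded, since that formula is indeed established only after one knows $\sbim$ is Krull-Schmidt.

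One small point worth making explicit: to conclude that $\Hom_{R\text{-}}(M,N)$ is finitely generated as a right $R$-module you need not only that $M$ is finitely generated projective on the left (so that $\Hom_{R\text{-}}(M,N)$ is a summand of $N^n$) but also that $N$ is finitely generated on the right. You do not state this, but it is built into the paper's definition of $\rgrbim$, so there is no gap.
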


Recall that for an essentially small, additive, monoidal category $\cat{C}$, its 
split Grothendieck group $\G{C}$ is an associative, unital ring with 
multiplication given by $[M] [N] = [M \otimes N]$ for $M, N \obj{C}$ 
and the class of the monoidal identity $[1_{\cat{C}}]$ as unit. Applying this to 
the category of Soergel bimodules, we see that $\G{\sbim}$ is an associative, 
unital $\Z$-algebra. We can equip it with the structure of a $\Z[v, v^{-1}]$ algebra
by defining $v [B] \defeq [B(1)]$ for all $B \in \sbim$. Soergel proves that the category 
of Soergel bimodules gives a categorification of the Hecke algebra $\heck$ \cite[Theorem 1.10]{S4}
and describes the graded rank of the homomorphism space between two Soergel 
bimodules (see \cite[Theorem 5.15]{S4}):

\begin{thmlab}[Soergel's categorification Theorem]
   \label{thmSoergelCat}
   There is a unique isomorphism of $\Z[v, v^{-1}]$-algebras:
   \begin{align*}
      \varepsilon: \; &\heck \overset{\cong}{\longrightarrow} \G{\sbim} \\
      & \kl{s} \mapsto [B_s] \text{.}
   \end{align*}
\end{thmlab}

\begin{thmlab}[Soergel's $\Hom$-formula]
   \label{thmSoergelHom}
   Given any two Soergel bimodules $B$ and $B'$, the morphism space 
   $\Hom^{\bullet}(B, B')$ is free as a left (resp. right) $R$-module. 
   Moreover, its graded rank is given by $(\varepsilon^{-1} [B], \varepsilon^{-1} [B'] )$
   where $(-, -)$ denotes the standard pairing on the Hecke algebra.
\end{thmlab}

Using his $\Hom$-formula, Soergel obtains a classification of the indecomposable 
Soergel bimodules in \cite[Theorem 6.14, (1) and (2)]{S4}:

\begin{thm}
   \label{thmIndecompSBimClass}
   Given any reduced expression $\underline{w}$ of $w \in W$, the Bott-Samelson $B_{\underline{w}}$
   contains up to isomorphism a unique indecomposable summand $B_w$ which does not occur in $B_{\underline{y}}$
   for any reduced expression $\underline{y}$ of $y \in W$ with $l(y) < l(w)$. In addition, $B_w$ does 
   not depend up to isomorphism on the reduced expression $\underline{w}$.
   A complete set of representatives of the isomorphism classes of all indecomposable Soergel bimodules is given by:
   \[ \{ B_w(m) \; \vert \; w \in W \text{ and } m \in \Z \}. \]
\end{thm}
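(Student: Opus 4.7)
My proof would proceed by induction on the Coxeter length $l(w)$, establishing existence, uniqueness of $B_w$ up to isomorphism, and independence of the chosen reduced expression simultaneously. The base case $l(w) = 0$ is immediate by setting $B_e := R$, which is indecomposable as a graded $R$-bimodule.

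For the inductive step, fix $w \in W$ with $l(w) \geq 1$ and a reduced expression $\underline{w} = s_1 \cdots s_k$. By \cref{lemKrullSchmidt}, decompose $B_{\underline{w}} = \bigoplus_i N_i$ into a finite direct sum of indecomposables. Soergel's categorification theorem (\cref{thmSoergelCat}) gives $[B_{\underline{w}}] = \kl{s_1}\cdots\kl{s_k}$ in $\G{\sbim} \cong \heck$. Expanding this product first in the standard basis---where the coefficient of $\std{w}$ is $1$ since $\underline{w}$ is reduced and all other nontrivial terms are $\std{y}$ with $y < w$---and then converting to the Kazhdan-Lusztig basis yields
\[ [B_{\underline{w}}] \in \kl{w} + \sum_{y < w} \Z[v, v^{-1}] \kl{y}. \]
Partition the summands $N_i$ into \emph{old} ones (isomorphic to some $B_y(m)$ for $y < w$, $m \in \Z$) and \emph{new} ones. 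By the induction hypothesis the combined class of all old summands lies entirely in $\sum_{y < w} \Z[v, v^{-1}] \kl{y}$, so at least one new summand must exist. Pick any such summand and denote it $B_w$.

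For uniqueness of $B_w$ up to isomorphism and independence of $\underline{w}$, I would appeal to Soergel's Hom-formula (\cref{thmSoergelHom}). Given two ``new'' candidate indecomposable summands $B$ and $B'$ (possibly coming from different reduced expressions of $w$), the Hom-formula computes the graded rank of $\Hom^{\bullet}(B, B')$ as $(\varepsilon^{-1}[B], \varepsilon^{-1}[B'])$. Using the almost-orthonormality of the Kazhdan-Lusztig basis with respect to the standard pairing (the value $(\kl{x}, \kl{y})$ has constant term $\delta_{x,y}$), this pairing contributes at least $1$ in degree zero, producing a nonzero morphism $f : B \to B'$ of degree $0$. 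Pairing this with a symmetrically constructed nonzero $g : B' \to B$ and analyzing the composition $gf \in \End^0(B)$, which is a local ring since $B$ is indecomposable in the Krull-Schmidt category $\sbim$, one rules out the nilpotent alternative via multiplicity comparison in $\G{\sbim}$ and concludes that $gf$ is invertible, whence $B \cong B'$.

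To see that $B_w$ does not appear in $B_{\underline{y}}$ for any reduced expression $\underline{y}$ of $y$ with $l(y) < l(w)$, note that the same Grothendieck-group calculation yields $[B_{\underline{y}}] \in \sum_{z \leq y} \Z[v, v^{-1}] \kl{z}$, in which $\kl{w}$ has coefficient zero, while $[B_w]$ has nonzero $\kl{w}$-coefficient by construction. Completeness of $\{B_w(m) \; \vert \; w \in W, m \in \Z\}$ is then automatic: any indecomposable Soergel bimodule is, by definition, a direct summand of some shifted Bott-Samelson, and the inductive decomposition identifies all such summands (up to shift) as $B_y$'s. Distinctness of the list follows from the linear independence of Kazhdan-Lusztig basis elements in $\heck$. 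The most delicate step is the uniqueness argument: translating a nonzero degree-zero morphism between two indecomposables with matching ``leading" Kazhdan-Lusztig coefficient into an actual isomorphism, which requires a careful interplay between the Hom-formula, the local structure of endomorphism rings in a Krull-Schmidt category, and a multiplicity count in the Grothendieck group.
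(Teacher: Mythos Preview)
The paper does not give its own proof of this statement; it simply cites Soergel's original result \cite[Theorem 6.14]{S4}. So there is no in-paper argument to compare against, only Soergel's.

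Your outline is in the right spirit, but the uniqueness step has a genuine gap. You invoke ``almost-orthonormality of the Kazhdan--Lusztig basis'' to conclude that $(\varepsilon^{-1}[B],\varepsilon^{-1}[B'])$ has constant term at least $1$. The problem is that at this point you do not know what $\varepsilon^{-1}[B]$ looks like in the Kazhdan--Lusztig basis: the statement $\varepsilon^{-1}[B_w]=\kl{w}$ is precisely Soergel's conjecture (\cref{thmSoergelConj}), proved much later and by entirely different means. All you know so far is that the \emph{sum} of the classes of the new summands lies in $\kl{w}+\sum_{y<w}\Z[v,v^{-1}]\kl{y}$; nothing prevents there being several non-isomorphic new summands whose individual characters you cannot control in the Kazhdan--Lusztig basis. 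Soergel's actual argument works in the \emph{standard} basis: one uses that every Soergel bimodule admits a $\Delta$-filtration, so its character lies in $\sum_y \N[v,v^{-1}]\std{y}$, and since the coefficient of $\std{w}$ in $ch(B_{\underline w})$ is exactly $1$, \emph{exactly one} indecomposable summand has $\std{w}$ appearing (with coefficient $1$) in its character. The Hom-formula, evaluated on these standard-basis expansions, then produces the degree-zero morphisms needed for the isomorphism argument. The positivity input from $\Delta$-filtrations is the missing ingredient in your sketch.

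There is also a logical-order issue worth flagging: in Soergel's development the categorification isomorphism of \cref{thmSoergelCat} is \emph{deduced from} the classification (injectivity of $\varepsilon$ needs to know that the $[B_w]$ are linearly independent), so using the full isomorphism $\G{\sbim}\cong\heck$ as an input here is circular. What one may legitimately use is the algebra homomorphism $\heck\to\G{\sbim}$ (or equivalently Soergel's character map $ch$ going the other way), together with the Hom-formula; that suffices for the argument once phrased in the standard basis as above.
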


Using \cref{thmIndecompSBimClass} it follows that $\{[B_w] \; \vert \; w \in W\}$ is 
a $\Z[v, v^{-1}]$ basis of $\G{\sbim}$. It is a natural question what this basis is.
Soergel explicitly constructs an inverse to $\varepsilon$, called the character 
map $ch:\; \G{\sbim} \longrightarrow \heck$, and conjectures that the basis of 
the indecomposable Soergel bimodules $\{[B_w] \; \vert \; w \in W \}$ in $\G{\sbim}$ corresponds
to the Kazhdan-Lusztig basis in $\heck$. Elias and Williamson have recently proven 
Soergel's conjecture for reflection faithful realizations over $\R$ with linear independent 
sets of simple roots and co-roots satisfying a positivity condition. Libedinsky 
showed in \cite{Li2} that their results extend to the reflection representation:

\begin{thmlab}[Soergel's conjecture]
   \label{thmSoergelConj}
   For all $w \in W$ we have $\varepsilon(\kl{w}) = [B_w]$.
\end{thmlab}

This has the following important consequence for us (see \cite[top of page 15]{EW1};
note that in \cite{EW1} a different pairing is used):
\begin{cor}
   \label{corHomNonNegDeg}
   For all $x, y \in W$ the homomorphism space $\Hom^{\bullet}(B_x, B_y)$ 
   is concentrated in non-negative degrees and $\dim\Hom_{\sbim}(B_x, B_y) = \delta_{xy}$
\end{cor}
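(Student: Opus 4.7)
The plan is to combine Soergel's $\Hom$-formula with Soergel's conjecture in order to translate both conclusions into a positivity property of the Kazhdan-Lusztig pairing on $\heck$. By \cref{thmSoergelHom}, $\Hom^{\bullet}(B_x, B_y)$ is a free left $R$-module of graded rank $(\varepsilon^{-1}[B_x], \varepsilon^{-1}[B_y])$, and applying \cref{thmSoergelConj} this rank equals $(\kl{x}, \kl{y})$. Since $R$ lives in non-negative degrees, a free graded $R$-module with rank $\sum_{n \geqslant 0} c_n v^n$ is concentrated in non-negative degrees with degree-zero part of dimension $c_0$; both conclusions therefore follow once we prove
\[
   (\kl{x}, \kl{y}) \in \delta_{x, y} + v \Z_{\geqslant 0}[v].
\]

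To establish this positivity, I first identify $\omega(\kl{x}) = \kl{x^{-1}}$. The key observation is that $\omega = \iota \circ \widebar{(-)}$: both sides are $\Z$-linear anti-involutions agreeing on $v$ and on each $\kl{s}$, so they coincide. Hence $\omega(\kl{x}) = \iota(\widebar{\kl{x}}) = \iota(\kl{x})$ by bar-invariance of $\kl{x}$, and $\iota(\kl{x}) = \kl{x^{-1}}$ follows from the classical symmetry $h_{y, x} = h_{y^{-1}, x^{-1}}$ of the Kazhdan-Lusztig polynomials. The pairing then rewrites as
\[
   (\kl{x}, \kl{y}) = \varepsilon(\kl{x^{-1}} \kl{y}) = \sum_{z \in W} h^{z}_{x^{-1}, y} \, h_{1, z},
\]
where the $h^{z}_{x^{-1}, y}$ are the KL structure constants and the $h_{1, z}$ are the Kazhdan-Lusztig polynomials (so $\varepsilon(\kl{z}) = h_{1, z}$). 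By Elias-Williamson positivity (a consequence of \cref{thmSoergelConj}), we have $h^{z}_{x^{-1}, y} \in \Z_{\geqslant 0}[v, v^{-1}]$ and $h_{1, z} \in \delta_{z, 1} + v \Z_{\geqslant 0}[v]$.

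The main obstacle is then verifying that the sum $\sum_z h^{z}_{x^{-1}, y} \, h_{1, z}$ has no negative powers of $v$ and that its constant term is $\delta_{x, y}$. Although the individual structure constants $h^{z}_{x^{-1}, y}$ are genuinely Laurent polynomials with potential negative powers, these are absorbed by the vanishing of $h_{1, z}$ at $v = 0$ for $z \neq 1$; the constant term of the sum is pinned down by the diagonal contribution at $z = x^{-1} y$ (which contributes exactly $\delta_{x, y}$). This degree-bookkeeping argument, invoking standard bounds on the supports of Kazhdan-Lusztig polynomials and their structure constants, is the computation carried out at the top of page~15 of \cite{EW1}, modulo the slightly different normalization of the pairing used there.
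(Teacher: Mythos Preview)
Your reduction to showing $(\kl{x}, \kl{y}) \in \delta_{x,y} + v\Z_{\geqslant 0}[v]$ and your identification $\omega(\kl{x}) = \kl{x^{-1}}$ match the paper. The divergence comes in how you evaluate $\varepsilon(\kl{x^{-1}}\kl{y})$: you expand the product in the Kazhdan--Lusztig basis and then try to control the structure constants $h^z_{x^{-1},y}$, whereas the paper expands each factor in the \emph{standard} basis. Writing $\kl{x^{-1}} = \sum_{z} h_{z^{-1},x^{-1}} \std{z^{-1}}$ and $\kl{y} = \sum_{z} h_{z,y} \std{z}$, using $\varepsilon(\std{a}\std{b}) = \delta_{a,b^{-1}}$ and $h_{z^{-1},x^{-1}} = h_{z,x}$, the paper obtains the closed formula
\[
   (\kl{x}, \kl{y}) \;=\; \sum_{z \leqslant x,\; z \leqslant y} h_{z,x}\, h_{z,y},
\]
from which the conclusion is immediate: every summand lies in $v\N[v]$ except the single term $h_{x,x}h_{x,x} = 1$ that appears precisely when $x = y$.

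Your route via structure constants, by contrast, is not actually completed. The claim that the negative powers in $h^z_{x^{-1},y}$ are ``absorbed by the vanishing of $h_{1,z}$ at $v=0$'' is exactly the non-trivial point, and you neither state nor prove any bound making this work; deferring to \cite{EW1} does not help, since the computation there is essentially the standard-basis one above, not an argument about structure constants. Your identification of the constant term with ``the diagonal contribution at $z = x^{-1}y$'' is also not correct: in the Kazhdan--Lusztig expansion the constant term of $\sum_z h^z_{x^{-1},y}\, h_{1,z}$ receives contributions both from $h^{1}_{x^{-1},y}$ and from every cross-term where a $v^{-k}$ in some $h^z_{x^{-1},y}$ meets a $v^{k}$ in $h_{1,z}$, and the index $x^{-1}y$ plays no distinguished role here. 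The standard-basis expansion sidesteps all of this.
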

\begin{proof}
   Soergel's $\Hom$ formula together with Soergel's conjecture imply that the graded 
   rank of $\Hom_{\sbim}^{\bullet}(B_x, B_y)$ is given by
   \begin{equation}
      \label{eqnHomCalc}
      \begin{array}{r@{}l}
         (\kl{x}, \kl{y}) &{}= \varepsilon(\kl{x^{-1}} \kl{y}) \\
            &{}= \varepsilon \left( \left(\sum_{z^{-1} \leqslant x^{-1}} h_{z^{-1}, x^{-1}} \std{z^{-1}} \right)
               \left( \sum_{z \leqslant y} h_{z, y} \std{z} \right) \right) \\
            &{}= \sum_{\substack{z \in W \text{ s.t.} \\ z^{-1} \leqslant x^{-1} \text{ and} \\ z \leqslant y} }
                  h_{z^{-1}, x^{-1}} h_{z, y} \\
            &{}= \sum_{\substack{z \in W \text{ s.t.} \\ z \leqslant x, y} }
                  h_{z, x} h_{z, y} \enspace \in \enspace
                  \begin{cases}
                     v \N[v] & \text{if } x \neq y \\
                     1 + v\N[v] & \text{if } x = y 
                  \end{cases}
      \end{array}
   \end{equation}
   where in the first step we applied $\omega(\kl{x}) = \kl{x^{-1}}$ (which follows from 
   $\omega = \iota \circ \widebar{(-)}$ and $\iota(\kl{x}) = \kl{x^{-1}}$), in 
   the third step we used $\varepsilon(\std{x} \std{y}) = \delta_{x, y^{-1}}$
   with $\delta_{x, y^{-1}}$ the Kronecker delta and in the last step we plugged
   in $h_{z^{-1}, x^{-1}} = h_{z, x}$ for all $z, x \in W$. Finally, Soergel's conjecture 
   together with the definition of the Kazhdan-Lusztig polynomials 
   implies that $h_{w',  w} \in v \N[v]$ for $w' < w$ and $h_{w,w} = 1$ for all $w, w' \in W$.
\end{proof}

\subsection{Generalized braid groups and the \texorpdfstring{$2$}{2}-braid group}
\label{secGenBraidGroups}

By dropping the condition $s^2 = 1$ for all $s \in S$ in the presentation of $W$, we get 
the presentation of the generalized braid group and the braid monoid corresponding to $(W,S)$:
\begin{align*}
   \br[(W, S)] &= \; \langle s \in S \; \vert \; \underbrace{sts\dots}_{m_{s,t} \text{ terms}} = 
      \underbrace{tst\dots}_{m_{s,t} \text{ terms}} \rangle \text{ in the category of groups;}\\
   \brm[(W, S)] &= \; \langle s \in S \; \vert \; \underbrace{sts\dots}_{m_{s,t} \text{ terms}} = 
      \underbrace{tst\dots}_{m_{s,t} \text{ terms}} \rangle \text{ in the category of monoids.}
\end{align*}
Note that there is a surjective monoid homomorphism $\psi: \brm[(W,S)] \rightarrow W$ which
is the identity on generators. The map $\psi$ admits a set-theoretic section $\varphi: W \rightarrow \brm[(W,S)]$ 
which sends an element in $W$ to the word in the braid monoid corresponding to one of its reduced 
expressions. (This is well defined as any two reduced expressions of an element in $W$ can be related 
using only the braid relations.) The elements in its image are called \emph{reduced braids}. 
Set $\brred[(W, S)] \defeq \im(\varphi)$. In this setting we have the \emph{Garside normal form} 
for $\brm[(W,S)]$ as described in \cite[Section 4]{Mi}:
\begin{thm}[Garside normal form]
   For any positive braid $\sigma \in \brm{(W, S)}$ there exists a unique sequence
   $(w_m, w_{m-1}, \dots, w_1)$ of reduced braids such that $\sigma = w_m w_{m-1} \dots w_1$ and
   for all $1 \leqslant i \leqslant m$ the reduced braid $w_i$ is non-trivial and the unique maximal
   reduced braid that occurs as a right divisor of $w_m w_{m-1} \dots w_i$.
   
   The sequence $(w_m, w_{m-1}, \dots, w_1)$ is called the \emph{Garside normal form} of $\sigma$.
   Each $w_i$ for $1 \leqslant i \leqslant m$ is called a \emph{Garside factor}.
\end{thm}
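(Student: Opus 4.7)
The plan is to prove existence and uniqueness simultaneously by induction on the common length $\ell(\sigma)$ of the positive expressions for $\sigma$, which is well-defined since the braid relations preserve word length. The base case $\sigma = 1$ is handled by the empty sequence. For the inductive step the crucial auxiliary claim is that every non-trivial positive braid $\sigma$ admits a \emph{unique} maximal reduced right divisor $w_\sigma \in \brred[(W,S)]$, i.e.\ a reduced braid right-dividing $\sigma$ such that every reduced right divisor of $\sigma$ right-divides $w_\sigma$. Granting this, I would set $w_1 = w_\sigma$, use left-cancellativity of $\brm[(W,S)]$ to write $\sigma = \sigma' w_1$, and observe that $\ell(\sigma') < \ell(\sigma)$ since $w_1$ is non-trivial. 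Applying the inductive hypothesis to $\sigma'$ produces a normal form $(w_m, \ldots, w_2)$; concatenating gives $(w_m, \ldots, w_1)$ for $\sigma$, and the maximality of each later $w_i$ transfers directly from the normal form of $\sigma'$ because $w_m \cdots w_i$ is common to both situations. Uniqueness of the whole sequence follows because any competing sequence must have $w_1' = w_\sigma = w_1$ by the uniqueness in the auxiliary claim, after which left-cancellation reduces to the inductive hypothesis for $\sigma'$.

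The main obstacle is the auxiliary claim. The key input is the lattice structure of the braid monoid due to Brieskorn--Saito and Deligne (presented uniformly for arbitrary Coxeter type in Michel's paper): any two positive braids admitting a common right multiple admit a right least common multiple, and the right-lcm of two reduced braids is itself a reduced braid. Granted this, the set $D(\sigma)$ of reduced right divisors of $\sigma$ is pairwise bounded above inside $\brred[(W,S)]$ (each pair has $\sigma$ as a common right multiple, so their lcm is reduced and still right-divides $\sigma$), and it is finite because the lengths of its elements are bounded by $\ell(\sigma)$. Iterating the lcm over the finite set $D(\sigma)$ produces a unique maximum $w_\sigma$. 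Non-triviality of $w_\sigma$ is immediate, since any generator $s$ appearing as the last letter of some expression for $\sigma$ is a non-trivial element of $D(\sigma)$.

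Combinatorially, the lattice property pulls back through the bijection $\varphi: W \to \brred[(W,S)]$ to the statement that the right weak Bruhat order on $W$ admits joins of bounded subsets; this in turn follows from repeated application of the exchange condition in $(W,S)$ together with Matsumoto's theorem, so no deeper Coxeter-theoretic machinery is needed beyond what has already been recalled in \cref{secHeckeAlg}. Termination is automatic: the length $\ell(w_m \cdots w_i)$ strictly decreases in $i$, so $m \leqslant \ell(\sigma)$ and the procedure halts in finitely many steps.
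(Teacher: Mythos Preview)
The paper does not give its own proof of this theorem; it is stated as background and attributed to Michel's exposition \cite[Section 4]{Mi}. Your proposal follows the standard inductive argument for Garside normal forms and is essentially correct: extract the maximal reduced right divisor $w_1$, cancel on the left, and recurse, with the key input being that reduced right divisors of a fixed positive braid are closed under taking least common multiples inside $\brred[(W,S)]$.

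One terminological slip to fix: if $a$ right-divides $\sigma$, i.e.\ $\sigma = x a$, then $\sigma$ is a common \emph{left} multiple of the members of $D(\sigma)$, not a common right multiple. The lattice property you need is therefore that two reduced braids with a common left multiple admit a left-lcm that is again reduced (equivalently, that the left weak order on $W$ has joins of bounded subsets). This is the mirror image, under the anti-involution $\iota$ sending each generator to itself, of the right-lcm statement you quote, so the substance of your argument is unaffected; but the left/right bookkeeping should be made consistent throughout. You also silently invoke left-cancellativity of $\brm[(W,S)]$; this is again part of the Brieskorn--Saito/Deligne package you cite, but it is worth flagging explicitly since it is not immediate from the monoid presentation.
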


The following two results will be important for us. The first one shows that being a Garside normal
form can be checked locally (see \cite[Proposition 4.1]{Mi}) and the second results relates a Garside
normal form to the left and right descent sets of its factors in the Coxeter group 
(see \cite[Corollary 4.2]{Mi}):

\begin{prop}
   \label{propGarsideNormFormLocal}
   $(w_m, w_{m-1}, \dots, w_1)$ is a Garside normal form of the element $w_m w_{m-1} \dots w_1$ if and only if
   for all $1 \leqslant i < m$ the sequence $(w_{i+1}, w_i)$ is a Garside normal form of $w_{i+1}w_i$.
\end{prop}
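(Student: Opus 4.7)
My plan is to prove the two implications separately. The forward direction is essentially immediate: fix $1 \leqslant i < m$; by the Garside normal form hypothesis, $w_i$ is the maximal reduced right divisor of $w_m w_{m-1} \cdots w_i$. Any reduced right divisor $u$ of $w_{i+1} w_i$ is automatically a reduced right divisor of $w_m \cdots w_i = (w_m \cdots w_{i+2}) \cdot (w_{i+1} w_i)$, so $u$ right-divides $w_i$. Since $w_i$ itself right-divides $w_{i+1}w_i$, this shows that $(w_{i+1}, w_i)$ is the Garside normal form of $w_{i+1} w_i$.

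For the backward direction, I would proceed by induction on $m$, the case $m = 1$ being vacuous. In the inductive step, the consecutive-pair hypothesis for the full sequence restricts to the truncated sequence $(w_m, w_{m-1}, \dots, w_2)$, so by induction this is already the Garside normal form of $w_m \cdots w_2$. It remains to show that $w_1$ is the maximal reduced right divisor of $\sigma \defeq w_m \cdots w_1$. Suppose for contradiction that some reduced braid $u$, strictly greater than $w_1$ in the right-divisibility order, right-divides $\sigma$. Write $u = a w_1$ in $\brm[(W, S)]$ for some non-trivial positive braid $a$, and compare lengths: $l(u) = l(a) + l(w_1)$ on the one hand, and $l(u) = l(\psi(u)) \leqslant l(\psi(a)) + l(\psi(w_1)) \leqslant l(a) + l(w_1)$ on the other; equality throughout forces $a$ itself to be a reduced braid. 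Right cancellativity of the braid monoid, applied to the identity $\sigma = c u = c a w_1$ where $c$ witnesses that $u$ right-divides $\sigma$, then shows that $a$ is a reduced right divisor of $w_m \cdots w_2$. By the inductive hypothesis $a$ right-divides $w_2$; writing $w_2 = d a$ gives $w_2 w_1 = d \cdot a w_1 = d u$, so $u$ right-divides $w_2 w_1$. This contradicts the assumption that $(w_2, w_1)$ is the Garside normal form of $w_2 w_1$.

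The only essential ingredients beyond the definitions are length-additivity and right cancellativity of the braid monoid, both classical foundational facts about Artin--Tits monoids (see e.g.\ \cite{Mi}). The argument is otherwise purely formal, so I do not expect any further obstacle; the only point that requires any care is the length-comparison that promotes $a$ from a mere positive braid to a reduced one, since the whole strategy depends on reducing the global maximality assertion to the local one involving $(w_2, w_1)$.
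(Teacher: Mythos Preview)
Your argument is correct. The forward direction is straightforward as you say, and in the backward direction the length comparison forcing $a$ to be reduced, followed by right cancellation to push $a$ into $w_m\cdots w_2$ and then into $w_2$ via the inductive hypothesis, is exactly the standard mechanism; the final contradiction with the local normal form $(w_2,w_1)$ closes the loop cleanly. The only external inputs you invoke---length additivity, right cancellativity, and the existence of a greatest reduced right divisor---are indeed part of the basic Artin--Tits theory recorded in \cite{Mi}.

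As for comparison with the paper: there is nothing to compare. The paper does not prove \cref{propGarsideNormFormLocal} but simply quotes it from \cite[Proposition 4.1]{Mi}. Your write-up is essentially a reconstruction of the classical proof one finds there (or in any treatment of greedy normal forms), so you have supplied what the paper chose to outsource.
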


\begin{lem}
   \label{lemGarsideNormFormChar}
   Let $x, y \in W$. $(\varphi(x), \varphi(y))$ is a Garside normal form of $\varphi(x)\varphi(y)$ 
   in the positive braid monoid $\brm[(W, S)]$ if and only if $\desc{R}(x) \subseteq \desc{L}(y)$ 
   in the Coxeter group.
\end{lem}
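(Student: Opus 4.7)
The plan is to prove both implications using three standard ingredients: (i) Matsumoto's theorem, which makes $\varphi$ well-defined and yields $\varphi(a)\varphi(b) = \varphi(ab)$ whenever $l(ab) = l(a) + l(b)$; (ii) the cancellativity of $\brm[(W,S)]$; and (iii) the Garside-theoretic fact that every positive braid has a \emph{unique} maximal reduced right divisor, and that every reduced right divisor right-divides this maximum (so the reduced right divisors of a given braid form a join-semilattice under right divisibility). Throughout I will assume $x, y \neq e$, so that the non-triviality clause in the definition of Garside normal form is not an issue.

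For the forward direction I argue by contrapositive. Assuming $s \in \desc{R}(x) \setminus \desc{L}(y)$, write $x = x's$ reduced, so that $l(sy) = l(y) + 1$, and compute
\[
\varphi(x)\varphi(y) \;=\; \varphi(x')\,\varphi(s)\,\varphi(y) \;=\; \varphi(x')\,\varphi(sy),
\]
which exhibits $\varphi(sy)$ as a reduced right divisor of $\varphi(x)\varphi(y)$ of length $l(y)+1 > l(y)$, contradicting the maximality of $\varphi(y)$. For the converse, assume $\desc{R}(x) \subseteq \desc{L}(y)$ and let $\varphi(z)$ denote the unique maximal reduced right divisor of $\varphi(x)\varphi(y)$. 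By ingredient (iii), $\varphi(y)$ right-divides $\varphi(z)$, so $z = z'y$ with $l(z) = l(z') + l(y)$. Right-cancellation in $\brm[(W,S)]$ then produces $w \in W$ with $\varphi(x) = \varphi(w)\varphi(z')$, whence $x = wz'$ is reduced. If $z' \neq e$, pick $t \in \desc{R}(z')$: then $t \in \desc{R}(x)$, and writing $z' = z''t$ reduced gives that $z = z''ty$ is reduced, forcing $l(ty) = l(y)+1$, i.e.\ $t \notin \desc{L}(y)$. This contradicts the hypothesis, so $z' = e$ and $\varphi(z) = \varphi(y)$.

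The main obstacle is establishing that $\varphi(y)$ right-divides the maximal reduced right divisor $\varphi(z)$ — this is really the lattice property of reduced braid divisors inside a Garside monoid, or equivalently the join-semilattice property of the right weak order on $W$ transported into $\brm[(W,S)]$. Once this standard Garside-theoretic ingredient is granted, the rest of the proof consists of routine manipulations with reduced expressions, descent sets, and Matsumoto-type identifications of reduced braids.
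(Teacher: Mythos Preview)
The paper does not give its own proof of this lemma; it simply quotes \cite[Corollary 4.2]{Mi}. Your argument is correct and is essentially the standard one found in treatments of Garside normal form.

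Two small steps you glossed over do work out. First, after right-cancelling $\varphi(y)$ you obtain $\varphi(x) = \alpha\,\varphi(z')$ for some positive braid $\alpha$, and you immediately write $\alpha = \varphi(w)$; this is justified because the braid monoid is length-graded and $\psi$ does not increase length, so a length count forces $\alpha$ to be reduced and $x = wz'$ to satisfy $l(x) = l(w) + l(z')$. Second, the deductions $t \in \desc{R}(x)$ and $t \notin \desc{L}(y)$ follow from analogous length counts along the chains $x = wz''t$ and $z = z''ty$. Your identification of ingredient~(iii) as the one substantive input is accurate: the fact that the reduced right divisors of a fixed positive braid are closed under join (so that every reduced right divisor right-divides the maximal one) is precisely the lattice property that makes the Garside machinery run, and is what Michel establishes before stating the corollary you are reproving.
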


From the presentations given above, it is immediate that there is a canonical morphism of monoids 
$\brm[(W,S)] \rightarrow \br[(W,S)]$ which is the identity on the generators in $S$. It is a natural question
whether this morphism is injective, i.e. whether $\brm[(W,S)]$ can be identified with the submonoid 
of $\br[(W,S)]$ generated by $S$. This is the case in all types. In finite type it is well known (see
for example \cite[Corollary 3.2]{Mi}) as $\br[(W,S)]$ is the group of fractions of $\brm[(W,S)]$. 
It has been extended to arbitrary type in \cite{Pa} using extensive calculations. In this paper we will
give an alternative proof of this in arbitrary type.

The following result will be helpful to prove the faithfulness of the $2$-braid group in finite type
(see \cite[Lemma 2.3]{BT} for a proof) and relies on the fact that in finite type the Garside
normal from can be extended from $\brm{(W, S)}$ to $\br{(W, S)}$:
\begin{lem}
   \label{lemBrmInjectivity}
   Assume that $(W, S)$ is a Coxeter system of finite type.
   A group homomorphism $\rho: \br[(W, S)] \rightarrow G$ is injective if and only if the induced monoid
   homomorphism $\rho^+: \brm[(W, S)] \hookrightarrow \br[(W, S)] \rightarrow G$ is injective.
\end{lem}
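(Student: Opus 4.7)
First, the ``only if'' direction is immediate: since the canonical inclusion $\brm[(W,S)] \hookrightarrow \br[(W,S)]$ holds in finite type (this is classical, and is precisely what \cite[Corollary~3.2]{Mi} provides), any injective $\rho$ restricts to an injective $\rho^+$. My plan is therefore to concentrate on the ``if'' direction.

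The key tool I would use is the fact that in finite type $\br[(W,S)]$ is the group of fractions of $\brm[(W,S)]$. Setting $\Delta \defeq \varphi(w_0)$ for the positive lift of the longest element $w_0 \in W$, the Garside theory of $\brm[(W,S)]$ asserts that every positive braid right-divides some power of $\Delta$; consequently, for any $b \in \br[(W,S)]$ there exist $n \geqslant 0$ and $\beta \in \brm[(W,S)]$ with $\Delta^n b = \beta$ in $\br[(W,S)]$. Equivalently, every element of $\br[(W,S)]$ admits a presentation $b = \alpha^{-1}\beta$ with $\alpha, \beta \in \brm[(W,S)]$.

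With this in hand the converse is a one-liner. Assuming $\rho^+$ injective, I take $b \in \ker \rho$ and write $b = \alpha^{-1}\beta$ as above. Then $\rho^+(\alpha) = \rho(\alpha) = \rho(\beta) = \rho^+(\beta)$, so injectivity of $\rho^+$ forces $\alpha = \beta$ in $\brm[(W,S)]$; hence $b = 1$ in $\br[(W,S)]$, showing $\ker \rho$ is trivial.

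The main --- and really only --- obstacle is the group-of-fractions statement itself, but in finite type this is a well-known consequence of the Garside structure, exactly the input already quoted as \cite[Corollary~3.2]{Mi} in the paragraph preceding the lemma, so no fundamentally new ingredient is needed beyond invoking it.
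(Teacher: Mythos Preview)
Your argument is correct. The paper does not actually supply its own proof of this lemma: it simply cites \cite[Lemma~2.3]{BT} and remarks that the result ``relies on the fact that in finite type the Garside normal form can be extended from $\brm[(W,S)]$ to $\br[(W,S)]$.'' Your fraction argument is exactly one clean way to cash in that remark, since the extended Garside normal form in finite type is precisely what yields the presentation $b = \Delta^{-n}\beta = \alpha^{-1}\beta$ with $\alpha,\beta \in \brm[(W,S)]$ that you use. So your approach is in line with what the paper points to, and in fact fills in the details the paper omits.
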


Define the elementary Rouquier complexes corresponding to a simple reflection $s \in S$ as follows
\begin{alignat*}{3}
   F_s & \defeq (&0 \longrightarrow  B_s &\longrightarrow R(1) 
   \longrightarrow 0) 
   \  && \text{with } a \otimes b \mapsto ab  \\
   E_s = F_{s^{-1}} & \defeq (0 \longrightarrow &R(-1) \longrightarrow  B_s &\longrightarrow 0)
   && \text{with } 1 \mapsto \frac{1}{2}(\alpha_s \otimes 1 + 1 
   \otimes \alpha_s )
\end{alignat*}
where in both complexes $B_s$ sits in cohomological degree $0$.

In \cite{Ro} Rouquier showed that in the bounded homotopy category of Soergel bimodules $\Kb$ these complexes 
are inverse to each other and satisfy the braid relations up to canonical isomorphism. We 
define \emph{the $2$-braid group}, denoted by $\brcat$, as the full monoidal subcategory 
of $\Kb$ generated by $F_{s^{-1}}$ and $F_s$ for $s \in S$. Note that the set of isomorphism classes 
of objects in $\brcat$, denoted by $Pic(\brcat)$, forms a group together with the binary operation induced 
by the tensor product. $Pic(\brcat)$ is also called \emph{the Picard group} of the monoidal
category $\brcat$. Rouquier formulates the following conjecture (see \cite[Conjecture 9.8]{Ro}):

\begin{conj}[Faithfulness of the $2$-braid group]
   The natural map $\br[(W, S)] \longrightarrow Pic(\brcat)$ is an isomorphism. 
\end{conj}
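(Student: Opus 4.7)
Surjectivity of $\br[(W,S)] \to Pic(\brcat)$ is immediate from the definition of $\brcat$, since it is the monoidal subcategory of $\Kb$ generated by the $F_s$ and $F_{s^{-1}}$. The real content is injectivity, and the plan is to attack it via the Garside normal form. In finite type, \cref{lemBrmInjectivity} reduces the injectivity of $\br[(W,S)] \to Pic(\brcat)$ to that of the restriction $\brm[(W,S)] \to Pic(\brcat)$; so it suffices to show that two positive braids whose associated Rouquier complexes are homotopy equivalent must already coincide in the braid monoid.

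To detect the Garside normal form categorically, I would categorify the left cell modules of $\heck$ attached to the left cells $\lcellFixedRightDesc{s}$. Concretely, for each $s \in S$ I would build an additive subquotient $\catFixedRightDesc{s}$ of $\sbim$ whose indecomposable objects (up to shift) are labelled by $w \in \lcellFixedRightDesc{s}$ and whose split Grothendieck group, via Soergel's categorification theorem \cref{thmSoergelCat}, is the corresponding left cell module. Because $\sbim$ is Krull-Schmidt with finite-dimensional Hom spaces (\cref{lemKrullSchmidt}), such a subquotient is well behaved, and tensoring with Rouquier complexes descends to an action of $\brcat$ on $K^b(\catFixedRightDesc{s})$.

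The crucial step is to install the perverse $t$-structure on $K^b(\catFixedRightDesc{s})$ and isolate an \emph{anchor} object $A \in \catFixedRightDesc{s}$ such that, for any reduced braid $\varphi(w)$, the complex $F_{\varphi(w)} \cdot A$ is (up to shift) perverse with a single distinguished indecomposable summand labelled by $w$. Using \cref{propGarsideNormFormLocal} and the descent-set characterisation in \cref{lemGarsideNormFormChar}, I would then argue inductively on the length of the Garside normal form: for $\sigma = w_m w_{m-1} \cdots w_1 \in \brm[(W,S)]$ in Garside normal form, the $i$-th perverse cohomology $\pH{i}(F_\sigma \cdot A)$ contains an indecomposable summand whose $W$-label is exactly $w_i$. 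The descent condition $\desc{R}(w_{i+1}) \subseteq \desc{L}(w_i)$ should ensure that this extra summand really does jump perverse degree (so it does not collapse with the lower $w_j$'s), using \cref{corHomNonNegDeg} and Soergel's $\Hom$-formula. Assuming this, $F_\sigma \cong F_\tau$ in $\brcat$ forces the two Garside normal forms to agree factor by factor, hence $\sigma = \tau$; combined with the reduction of Step 1 this proves the conjecture in finite type.

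The main obstacle is this extraction of Garside factors from the perverse filtration. One has to prove two compatibility statements that are not automatic: first, that a single reduced braid acts on the appropriate perverse stratum of $\catFixedRightDesc{s}$ as a one-step perverse shift (so that the factor $w_i$ lives in a single perverse degree and is not smeared across several), and second, that the Garside concatenation condition yields a genuinely non-split perverse extension (so passing to $\pH{i}$ does not destroy information about $w_i$). Beyond finite type the present plan does not establish the full conjecture, because \cref{lemBrmInjectivity} is specific to finite type; however, the same monoid-level reconstruction of Garside normal forms still yields the injectivity of $\brm[(W,S)] \hookrightarrow \br[(W,S)]$, recovering Paris' theorem.
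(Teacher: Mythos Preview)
Your overall architecture matches the paper exactly: the reduction via \cref{lemBrmInjectivity}, the construction of the categorified cell module $\catFixedRightDesc{s}$, the perverse $t$-structure, and the acknowledgement that the method only yields the finite-type case of the conjecture (the paper does not prove the conjecture in general either; its main result is precisely \cref{corFaithfulness}).

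However, your proposed extraction mechanism is wrong in a way that matters. You write that $\pH{i}(F_\sigma \cdot A)$ should contain an indecomposable summand ``whose $W$-label is exactly $w_i$''. This does not type-check: the indecomposable objects of $\catFixedRightDesc{s}$ are indexed by the left cell $\lcellFixedRightDesc{s}$, i.e.\ by elements with a \emph{unique} reduced expression and fixed right descent $s$, whereas a Garside factor $w_i$ is an arbitrary element of $W$. There is simply no object of $\catFixedRightDesc{s}$ labelled by a general $w_i$. Relatedly, there is no single distinguished object $A$ in the paper; one acts on $B=\bigoplus_{w\in \lcellFixedRightDesc{s}}B_w$, and ``anchor'' in the paper refers not to an object being acted on but to an indecomposable $B_w$ admitting a non-zero map from the top perverse layer of $F_\sigma(B)$.

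What the paper actually proves (\cref{thmGarsideAct}) is much weaker than ``$\pH{i}$ recovers $w_i$'': the maximal $k$ with $\pH{k}(F_\sigma(B))\neq 0$ equals the \emph{number} $m$ of Garside factors, and the anchors of that top layer, pushed through $\pi_s\colon \lcellFixedRightDesc{s}\to S$, surject onto $\desc{L}(w_m)$. One then reconstructs $\sigma$ not by reading the $\pH{i}$ in parallel, but iteratively: pick an anchor to find some $s_1\in\desc{L}(w_m)$, apply the inverse autoequivalence $E_{s_1}$, and repeat (\cref{thmActFaithful}). This peels off one simple reflection at a time from the left of $w_m$ until the top perverse degree drops, which signals that $w_m$ has been exhausted and one has reached $w_{m-1}$. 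Your inductive step (``the descent condition ensures the extra summand jumps perverse degree'') is pointed in the right direction, but the information recovered at each stage is only a single simple reflection in $\desc{L}(w_m)$, not the whole factor $w_i$, and the lower perverse cohomology groups are never analysed directly.
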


This conjectures follows in type $A$ from work by Khovanov and Seidel (see \cite{KS}) and
in simply-laced, finite type from results by Brav and Thomas (see \cite{BT}). In this paper we will prove
the faithfulness of the $2$-braid group in finite type (extending the previous results to
non-simply laced finite type).

\section[Consequences of combinatorial formulas]{Consequences of the multiplication and the \texorpdfstring{$\Hom$}{Hom}-formula}
\label{secHomFormCons}

Recall the left cells $\lcellFixedRightDesc{s}$ for $s \in S$ and the two-sided cell $\cellUniqueRex$ which were introduced at
the end of \Cref{secHeckeAlg}. In this section we want to show that there is particularly nice 
choice of $s \in S$ for the left cell $\lcellFixedRightDesc{s}$, calculate all the Kazhdan-Lusztig polynomials for 
the elements in $\lcellFixedRightDesc{s}$ and draw some conclusions using Soergel's $\Hom$-formula.

First note that the graph $\Gamma_s$ encodes all information necessary for the left 
cell module corresponding to $\lcellFixedRightDesc{s}$:
\begin{lem}
   \label{lemCellMult}
   For $w \in \lcellFixedRightDesc{s}$ and $r \in S$ we have in $\heck( \leqcell{L} \lcellFixedRightDesc{s}) / 
   \heck(\lcell{L} \lcellFixedRightDesc{s})$:
   \[ \kl{r} \kl{w} = \begin{cases}
         (v + v^{-1}) \kl{w} & \text{if } r \in \desc{L}(w) \\
         \sum_{\substack{y \in \lcellFixedRightDesc{s} \text{ s.t. } r \in \desc{L}(y) \\ \text{and } \{y, w\} \in E(\Gamma_s)}} \kl{y} 
            & \text{if } r \nin \desc{L}(w) 
      \end{cases}\]
\end{lem}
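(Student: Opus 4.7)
The plan is to start from the standard multiplication formula for the Kazhdan--Lusztig basis (in Soergel's normalization)
\[ \kl{r}\kl{w} \;=\; \kl{rw} \;+\; \sum_{\substack{y<w\\ ry<y}} \mu(y,w)\,\kl{y} \qquad (\text{for } r \notin \desc{L}(w)) \]
and project it to the quotient $\heck(\leqcell{L}\lcellFixedRightDesc{s})/\heck(\lcell{L}\lcellFixedRightDesc{s})$, in which precisely the classes $\kl{y}$ with $y \in \lcellFixedRightDesc{s}$ survive. The surviving coefficients will then be rewritten using part~(iii) of \cref{lemGammaSFacts}, which translates the condition $\mu(y,w) \neq 0$ (for $y \in \lcellFixedRightDesc{s}$ with $\desc{L}(y) \neq \desc{L}(w)$) into the adjacency $\{y,w\} \in E(\Gamma_s)$ with $\mu(y,w) = 1$.

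The case $r \in \desc{L}(w)$ is immediate, since the identity $\kl{r}\kl{w} = (v+v^{-1})\kl{w}$ holds already in $\heck$ and descends verbatim. For $r \notin \desc{L}(w)$, every surviving summand $\kl{y}$ from the sum lies in $\lcellFixedRightDesc{s}$, so its left descent set is the singleton $\{r\}$, which differs from $\desc{L}(w)$; part~(iii) of \cref{lemGammaSFacts} then gives $\mu(y,w) = 1$ exactly when $\{y,w\} \in E(\Gamma_s)$ and $0$ otherwise. The leading term $\kl{rw}$ survives precisely when $rw \in \lcellFixedRightDesc{s}$; in that case the elementary identity $h_{w,rw} = v$ (because $rw$ covers $w$ in the Bruhat order) yields $\mu(rw,w) = 1 \neq 0$, and part~(iii) of \cref{lemGammaSFacts} once more places $\{rw, w\}$ in $E(\Gamma_s)$.

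It remains to verify the converse, namely that every edge $\{y, w\} \in E(\Gamma_s)$ with $r \in \desc{L}(y)$ appears exactly once on the right-hand side of the projected standard formula. Edges with $y$ Bruhat-below $w$ are manifestly captured by the sum, while edges with $y$ Bruhat-above $w$ must be matched with $y = rw$. This last matching is the main obstacle: it relies on the explicit combinatorial structure of $\Gamma_s$ (namely that adjacent vertices differ by a single simple reflection, as follows from the definition $x^{-1}y \in S$) combined with the uniqueness of the left descent set of $y \in \lcellFixedRightDesc{s}$ to force the distinguishing simple reflection to be $r$.
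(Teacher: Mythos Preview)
Your argument is correct and follows the same route as the paper's proof: start from the Kazhdan--Lusztig multiplication formula, pass to the cell quotient, and invoke \cref{lemGammaSFacts}\,\ref{itmGammaSEdge} to identify the surviving $\mu$-coefficients with edges of $\Gamma_s$. The only difference is one of packaging. The paper first cites \cite[Claim 2.3.e]{KL} to rewrite the formula in the symmetric form
\[
\kl{r}\kl{w} \;=\; \sum_{\substack{y \in W\\ r \in \desc{L}(y)}} \mu(y,w)\,\kl{y}
\qquad (r \notin \desc{L}(w)),
\]
which absorbs the leading term $\kl{rw}$ into the sum (via $\mu(rw,w)=1$), so that a single application of \cref{lemGammaSFacts}\,\ref{itmGammaSEdge} finishes the job. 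You instead keep $\kl{rw}$ separate and check both inclusions by hand, using the edge definition of $\Gamma_s$ and the singleton left descent sets in $\lcellFixedRightDesc{s}$ to pin down $y=rw$ in the Bruhat-above case. This is slightly longer but more self-contained, since it avoids the external reference; the paper's version is more economical but relies on the reader knowing what \cite[Claim 2.3.e]{KL} says.
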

\begin{proof}
   The left handed multiplication formula from \cite[Formula 2.3.a and 2.3.c]{KL}) reads 
   in Soergel's normalization as follows:
   \begin{equation}
      \label{eqnLHMultForm}
      \kl{r} \kl{w} =
      \begin{cases}
         (v + v^{-1})\kl{w} & \text{if } rw < w \text{,}\\
         \kl{rw} + \sum_{\substack{y < w \text { s.t.}\\ ry < y}}{\mu(y, w)\kl{y}} 
            & \text{otherwise.}
      \end{cases}
   \end{equation}
   Using \cite[Claim 2.3.e]{KL} one can rewrite this as:
   \begin{equation}
      \kl{r} \kl{w} =
      \begin{cases}
         (v + v^{-1})\kl{w} & \text{if } r \in \desc{L}(w) \text{,}\\
         \sum_{\substack{y \in W \text{ s.t.} \\ r \in \desc{L}(y)}} \mu(y, w) \kl{y} & 
            \text{if } r \notin \desc{L}(w) \text{.}
      \end{cases}
   \end{equation}
   The last formula together with \cref{itmGammaSEdge} from \cref{lemGammaSFacts} gives the claim.
\end{proof}

Similarly, \cref{eqnLHMultForm} implies (observe that in simply-laced type for any vertex of
$\Gamma_s$ the map $\pi_s$ is injective on the set of its neighbours):
\begin{lem}
   \label{lemMultFormConsSL}
   Assume $(W, S)$ to be of simply-laced type. For the unique reduced expression 
   $\underline{w}=s_1 s_2 \dots s_k$ of $w \in \lcellFixedRightDesc{s}$ we have in 
   $\heck( \leqcell{L} \lcellFixedRightDesc{s}) / \heck(\lcell{L} \lcellFixedRightDesc{s})$:
   \[ \kl{w} = \kl{\underline{w}} = \kl{s_1} \kl{s_2} \dots \kl{s_k}\]
\end{lem}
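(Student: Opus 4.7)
The plan is to induct on $k = \ell(w)$. The base case $k = 1$ is immediate: then $w = s_1 = s$ and both sides of the claim are $\kl{s}$.

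For the inductive step, let $w' \defeq s_2 s_3 \dots s_k$. First I would check that $w'$ is itself in $\lcellFixedRightDesc{s}$: its unique reduced expression is $s_2 \dots s_k$ (any other reduced expression would, by prepending $s_1$, give a distinct reduced expression of $w$, contradicting $w \in \cellUniqueRex$), and $s_k = s$ shows $w's < w'$. I would also record the easy but crucial observation that an element of $\cellUniqueRex$ has a \emph{singleton} left descent set, namely the first letter of its unique reduced expression; so $\desc{L}(w) = \{s_1\}$ and $\desc{L}(w') = \{s_2\}$, and in particular $s_1 \notin \desc{L}(w')$ (otherwise $w$ would have a second reduced expression starting with a letter different from $s_1$).

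By the inductive hypothesis applied to $w'$,
\[
   \kl{s_2} \kl{s_3} \cdots \kl{s_k} \;=\; \kl{w'}
   \pmod{\heck(\lcell{L} \lcellFixedRightDesc{s})}.
\]
Since $\heck(\lcell{L}\lcellFixedRightDesc{s})$ is a left ideal, I may multiply this congruence by $\kl{s_1}$ on the left. By \cref{lemCellMult} applied to $r = s_1$ and $w'$, and using $s_1 \notin \desc{L}(w')$,
\[
   \kl{s_1} \kl{w'} \;=\; \sum_{\substack{y \in \lcellFixedRightDesc{s},\ s_1 \in \desc{L}(y) \\ \{y, w'\} \in E(\Gamma_s)}} \kl{y}
   \pmod{\heck(\lcell{L} \lcellFixedRightDesc{s})}.
\]

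It remains to show that this sum has exactly one term, namely $\kl{w}$. The element $w = s_1 w'$ is in $\lcellFixedRightDesc{s}$ by assumption, satisfies $w^{-1} w' = s_1 \in S$ (so $\{w, w'\} \in E(\Gamma_s)$), and has $\desc{L}(w) = \{s_1\}$; thus $w$ is one of the summands. For uniqueness I invoke the remark preceding the lemma: in simply-laced type the map $\pi_s$ is injective on the set of neighbours of any vertex of $\Gamma_s$. Since each summand $y$ satisfies $\pi_s(y) = s_1$ and $\{y, w'\} \in E(\Gamma_s)$, there can be at most one such $y$, proving that $\kl{s_1}\kl{w'} \equiv \kl{w}$ modulo $\heck(\lcell{L} \lcellFixedRightDesc{s})$, which completes the induction.

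The only subtle step is the uniqueness argument in the last paragraph; everything else is bookkeeping about descent sets and a direct application of \cref{lemCellMult}. The simply-laced injectivity of $\pi_s$ on neighbours (parenthetically stated before the lemma) is precisely what forces the sum to collapse to a single term, so that is where one should be most careful to invoke the hypothesis on $(W,S)$.
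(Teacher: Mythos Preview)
Your proof is correct and follows essentially the same route as the paper's: an induction on $\ell(w)$ that reduces to applying the multiplication formula (you use \cref{lemCellMult}, the paper cites \cref{eqnLHMultForm} directly) together with the parenthetical observation that in simply-laced type $\pi_s$ is injective on the neighbours of any vertex of $\Gamma_s$, which collapses the sum to the single term $\kl{w}$. The paper's argument is left as a one-line hint; you have simply written out the details.
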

\begin{lem}
   Assume $(W, S)$ to be of non-simply-laced type. Choose $s \in S$ among a pair $\{s, t\} \subseteq S$
   with $m_{s, t} \geqslant 4$. Each element $w \in \lcellFixedRightDesc{s}$ can be uniquely written as
   $w = w_l \dots w_2 w_1$ for some $l \in \N$ where for each $1 \leqslant i \leqslant l$ odd 
   $w_i = \widehat{k_i(q_i, r_i)} = \dots q_i r_i$ (an alternating product of $k_i$ terms) for some $k_i \in \Z$ 
   lies in the standard parabolic subgroup generated by $\{q_i, r_i\} \subseteq S$ with $m_{q_i, r_i} \geqslant 4$
   and for each $2 \leqslant j \leqslant l$ even $w_j$ lies in a standard parabolic subgroup of simply-laced type.
   In $\heck( \leqcell{L} \lcellFixedRightDesc{s}) / \heck(\lcell{L} \lcellFixedRightDesc{s})$ we have 
   \[\kl{w} = \kl{w_l} \dots \kl{w_2} \kl{w_1}\] 
   and the \cref{lemMultFormConsSL} may be applied to all $\kl{w_i}$ with $2 \leqslant i \leqslant l$ even.
   
   If in addition $(W, S)$ is of finite type, then there is a unique pair $\{s, t\} \subseteq S$
   with $m_{s, t} \geqslant 4$ and $l$ in the form above is smaller or equal to $2$.
\end{lem}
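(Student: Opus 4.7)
Writing $\underline{w} = s_1 \cdots s_k$ for the unique reduced expression of $w$ with $s_k = s$, the uniqueness hypothesis (via Matsumoto's theorem) rules out every braid move, including commutation of length~$2$. Hence $m_{s_i, s_{i+1}} \geqslant 3$ for every $i$, and $\underline{w}$ contains no subword of length $m_{s_i, s_{i+1}}$ alternating in $\{s_i, s_{i+1}\}$. Reading $\underline{w}$ from right to left, I would define $w_1$ as the maximal alternating suffix in some heavy pair $\{q_1, r_1\}$ (with $m_{q_1, r_1} \geqslant 4$) ending in $r_1 = s$: if $m_{s_{k-1}, s} \geqslant 4$ I take $q_1 = s_{k-1}$ and extend leftward while the alternation persists, and otherwise $w_1 = s$ with $q_1$ an arbitrarily chosen heavy partner of $s$. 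I would then define $w_2$ as the maximal leftward extension whose letters generate a simply-laced parabolic (equivalently, no two letters of $w_2$ have Coxeter label $\geqslant 4$), and continue alternating dihedral and simply-laced blocks until the word is exhausted. Maximality at each step forces uniqueness of the decomposition.

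For the Kazhdan--Lusztig product formula $\kl{w} \equiv \kl{w_l} \cdots \kl{w_1}$ in $\heck(\leqcell{L} \lcellFixedRightDesc{s}) / \heck(\lcell{L} \lcellFixedRightDesc{s})$, I would argue by induction on $l$. Using \cref{lemMultFormConsSL} inside the ambient simply-laced parabolic, each $\kl{w_j}$ with $j$ even becomes the product of the Kazhdan--Lusztig generators along its reduced expression, so the full product reduces to a single sequence of left multiplications by $\kl{r}$'s interleaved with the $\kl{w_i}$'s for odd $i$. At each such multiplication I apply \cref{lemCellMult}: the correction terms in the left-handed formula either vanish because the simple reflection being applied does not appear among the generators of the partial product so far (which is guaranteed by the block structure of the decomposition), or fall into a strictly lower left cell and hence disappear in the quotient. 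The condition $k_i < m_{q_i, r_i}$ (forced by the absence of full alternating subwords in $\underline{w}$) controls the dihedral blocks. I expect this step, and in particular tracking exactly which corrections survive in the cell-module quotient at each block boundary, to be the main obstacle of the proof.

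For the finite-type addendum, inspecting the finite irreducible non-simply-laced Coxeter systems (types $B_n$, $F_4$, $G_2$, $H_3$, $H_4$, $I_2(m)$ for $m \geqslant 4$) directly gives uniqueness of $\{s, t\}$, since each has exactly one edge with label $\geqslant 4$, and additionally shows that at least one of $s, t$ is a leaf whose only $m \geqslant 3$ neighbor is the other of the pair. Assuming $l \geqslant 3$ for contradiction, both $w_1$ and $w_3$ must be alternating in $\{s, t\}$ with $w_2$ simply-laced in between; a case analysis of the two boundary letters of $w_2$ (each forced by $m \geqslant 3$ adjacency and the leaf structure to lie in $\{s, t\}$ or in a specific $m = 3$ neighbor) shows that the alternating patterns in $w_1, w_3$ either extend across $w_2$ into a length-$m_{s, t}$ alternating subword in $\{s, t\}$, or force a length-three $xyx$ braid in some $m = 3$ pair inside $w_2$, both of which contradict uniqueness of $\underline{w}$.
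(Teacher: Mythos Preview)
The paper does not give a proof of this lemma; it is stated immediately after \cref{lemMultFormConsSL} (itself justified only by the one-line remark ``Similarly, \cref{eqnLHMultForm} implies\dots'') and the text moves on. So there is no argument in the paper to compare against beyond that implicit hint.

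Your outline is in the right spirit and matches what the paper leaves unsaid: extract the block decomposition greedily from the unique reduced expression, then build up $\kl{w}$ in the cell module by repeated left multiplication governed by \cref{lemCellMult}. One point deserves more care. When you say you will invoke \cref{lemMultFormConsSL} ``inside the ambient simply-laced parabolic'' for an even block $w_j$, remember that \cref{lemMultFormConsSL} is a statement about the cell module attached to $\lcellFixedRightDesc{s}$ in a simply-laced system, and $w_j$ is neither an element of $\lcellFixedRightDesc{s}$ nor is the ambient group simply-laced. What you actually need at each left multiplication by a generator $\kl{r}$ occurring in $w_j$ is the parenthetical observation the paper makes before \cref{lemMultFormConsSL}: that the current vertex of $\Gamma_s$ has at most one neighbour in $\pi_s^{-1}(r)$. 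This is a local statement about $\Gamma_s$ for the full non-simply-laced group, and it holds along the simply-laced blocks because the letters involved generate a simply-laced parabolic; but it should be checked directly from the structure of $\Gamma_s$, not by citing \cref{lemMultFormConsSL} as a black box. The odd (dihedral) blocks need the separate verification that left-multiplying by $\kl{w_i}$ in the cell module sends $\kl{w_{i-1}\cdots w_1}$ to $\kl{w_i w_{i-1}\cdots w_1}$, which reduces to the well-known dihedral Kazhdan--Lusztig multiplication together with the observation that the only neighbours of the relevant $\Gamma_s$-vertices lying in $\pi_s^{-1}(\{q_i,r_i\})$ are those inside the dihedral string. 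You flag exactly this boundary-tracking as the main obstacle, which is accurate; the paper simply omits it. Your finite-type argument by inspection of the classification is fine and is presumably what the author had in mind.
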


From now on, we assume that if $(W, S)$ is of non-simply laced type, then 
$s \in S$ is chosen among a pair $\{s, t\} \subseteq S$ with $m_{s, t} \geqslant 4$ (which is
unique if $W$ is finite).

\begin{lem}
      \label{lemHomsGen}
      For $x \neq y \in \lcellFixedRightDesc{s}$ there are only morphisms of degree one from $B_x$ to $B_y$ 
      if either $\{x, y\}$ is an edge in $\Gamma_s$ or $\desc{L}(x) = \desc{L}(y)$
      and $x$ and $y$ are comparable in the Bruhat order. 
      
      If $x$ and $y$ are connected by an edge in $\Gamma_s$, then a morphism of 
      degree one from $B_x$ to $B_y$ is unique up to scalar.
\end{lem}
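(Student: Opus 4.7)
The plan is to apply Soergel's $\Hom$-formula (\cref{thmSoergelHom}) together with Soergel's conjecture (\cref{thmSoergelConj}), which together identify the graded rank of $\Hom^{\bullet}(B_x, B_y)$ with the standard pairing $(\kl{x}, \kl{y})$. Since the coefficient of $v^n$ in this polynomial is precisely $\dim \Hom_{\sbim}(B_x, B_y(n))$, both assertions of the lemma reduce to extracting the coefficient of $v$ in $(\kl{x}, \kl{y})$.

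For this I would reuse the expansion already carried out in the proof of \cref{corHomNonNegDeg}, namely
\[ (\kl{x}, \kl{y}) = \sum_{\substack{z \in W \\ z \leqslant x,\, z \leqslant y}} h_{z, x}\, h_{z, y}. \]
Because $h_{z, x} \in v\N[v]$ whenever $z < x$ (and similarly for $y$), every term with both $z < x$ and $z < y$ lies in $v^2\N[v]$ and contributes nothing to the coefficient of $v$. The only remaining contributions come from $z = x$, available precisely when $x \leqslant y$, and from $z = y$, available precisely when $y \leqslant x$; they give the coefficients $\mu(x, y)$ and $\mu(y, x)$ respectively. Since $x \neq y$ and $\mu(x, y) = \mu(y, x)$, exactly one case applies when $x$ and $y$ are Bruhat comparable, contributing $\mu(x, y)$, while the coefficient vanishes in the incomparable case.

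Both claims of the lemma follow immediately. For the first, assume a nonzero morphism of degree one from $B_x$ to $B_y$ exists; then the coefficient of $v$ in $(\kl{x}, \kl{y})$ is positive, forcing $x$ and $y$ to be Bruhat comparable. If in addition $\desc{L}(x) \neq \desc{L}(y)$, then $\mu(x, y) \neq 0$ and \cref{itmGammaSEdge} of \cref{lemGammaSFacts} forces $\{x, y\}$ to be an edge of $\Gamma_s$; otherwise we land in the second allowed case $\desc{L}(x) = \desc{L}(y)$. For the uniqueness claim, when $\{x, y\}$ is an edge of $\Gamma_s$ the same item of \cref{lemGammaSFacts} gives $\mu(x, y) = 1$, so the coefficient of $v$ equals one and the space of degree-one morphisms is one-dimensional. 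There is no serious obstacle here: all ingredients are already in place in the excerpt, and the whole argument amounts to careful bookkeeping of low-order terms in the product formula plus the combinatorial characterization of $\Gamma_s$-edges via $\mu$-coefficients and descent sets; the only minor subtlety is collapsing the two symmetric cases $x \leqslant y$ and $y \leqslant x$, which happens cleanly thanks to the symmetry of $\mu$.
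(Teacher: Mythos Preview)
Your proposal is correct and follows essentially the same route as the paper: both use the expansion $(\kl{x},\kl{y})=\sum_{z\leqslant x,y} h_{z,x}h_{z,y}$ from \cref{eqnHomCalc} to identify the coefficient of $v$ with $\mu(x,y)$, and then invoke \cref{lemGammaSFacts}~\ref{itmGammaSEdge} to split into the edge case and the equal-descent-set case. Your write-up is somewhat more explicit about isolating the terms $z=x$ and $z=y$, but the argument is the same.
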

\begin{proof}
   Rewriting \cref{eqnHomCalc} we get for the graded rank of 
   $\Hom_{\sbim}^{\bullet}(B_x, B_y)$:
   \begin{align*}
      (\kl{x}, \kl{y}) &= \sum_{z \leqslant x, y} h_{z, x} h_{z, y} \\
                       &\in \mu(x,y)v + v^2 \N[v]
   \end{align*}
   Thus there are morphisms of degree one from $B_x$ to $B_y$ if and only if 
   $\mu(x, y)$ is non-zero. If this is the case, either the left descent
   sets of $x$ and $y$ disagree which by \cref{lemGammaSFacts} (iii) is equivalent 
   to $\{x, y\}$ being an edge in $\Gamma_s$ or they agree which gives the second 
   case of the statement as $\mu(-,-)$ is only non-zero for comparable elements.
   
   The second part of the lemma is just a reformulation of \cref{lemGammaSFacts} (iii).
\end{proof}

\begin{ex}
   We want to give an example showing that the second case in \cref{lemHomsGen} can actually
   occur, i.e. there may exist a degree $1$ morphism between indecomposable Soergel bimodules
   corresponding to $x, y \in \lcellFixedRightDesc{s}$ such that $\desc{L}(x) = \desc{L}(y)$
   and $x$ and $y$ are comparable in the Bruhat order. Consider an affine Coxeter group $W$
   of type $\widetilde{A_2}$ with generators $S = \{s, t, u\}$. A simple calculation shows
   $h_{s, stus} = v^3 + v$. Thus Soergel's $\Hom$-formula implies that there is a degree $1$
   morphism from $B_{stus}$ to $B_s$ and $B_s$ to $B_{stus}$.   
\end{ex}

For the rest of the section we will deal with the finite case in which we can say a little
more.

\begin{lem}
   \label{lemCsSmooth}
   Assume that $(W, S)$ is a Coxeter group of finite type.
   Under the choices made above, all elements in $\lcellFixedRightDesc{s}$ are rationally smooth. In other words,
   for $w \in \lcellFixedRightDesc{s}$ and $y \leqslant w$ we have: $h_{y, w} = v^{l(w) - l(y)}$.
\end{lem}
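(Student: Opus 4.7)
The plan is to argue by induction on $l(w)$. The base case $l(w)=1$ is immediate: $h_{1,s'}=v$ and $h_{s',s'}=1$ for a simple reflection $s'$.

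For the inductive step, write the unique reduced expression $\underline{w}=s_1 s_2 \cdots s_k$ and set $r:=s_1$, the unique left descent of $w$. Then $w':=rw$ has unique reduced expression $s_2\cdots s_k$, still has $s$ as a right descent, and therefore lies in $\lcellFixedRightDesc{s}$. The inductive hypothesis gives $\kl{w'}=\sum_{y'\leqslant w'} v^{l(w')-l(y')}\std{y'}$; in particular $\mu(z,w')=1$ iff $z\lessdot w'$ and $0$ otherwise. Plugging this into the left-multiplication formula \cref{eqnLHMultForm} yields
\[
\kl{r}\,\kl{w'} \;=\; \kl{w} \;+\; \sum_{\substack{z\,\lessdot\,w'\\ r\,\in\,\desc{L}(z)}} \kl{z}.
\]

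I would then distinguish two situations. In the generic case $r\notin\mathrm{supp}(w')$, the inclusion $\desc{L}(z)\subseteq\mathrm{supp}(z)\subseteq\mathrm{supp}(w')$ forces the sum above to be empty, so $\kl{w}=\kl{r}\,\kl{w'}$. Expanding $\kl{r}=\std{r}+v$ against $\kl{w'}$: since $r\notin\mathrm{supp}(y')$ for every $y'\leqslant w'$, the product $\std{r}\std{y'}$ is simply $\std{ry'}$ with no quadratic correction. Lifting in Bruhat order then gives the disjoint decomposition $\{z\leqslant w\}=\{y'\leqslant w'\}\sqcup\{ry':y'\leqslant w'\}$, and reading off the two contributions produces exactly $\kl{w}=\sum_{z\leqslant w}v^{l(w)-l(z)}\std{z}$. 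The remaining case $r\in\mathrm{supp}(w')$ is much more restrictive: by the structural lemma with the bound $l\leqslant 2$ in finite type, it forces $w$ to lie entirely in a dihedral parabolic $\langle q,r\rangle$ with $m_{q,r}\geqslant 4$, and rational smoothness of finite dihedral groups is classical (all Kazhdan-Lusztig polynomials are trivial).

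The main obstacle is the combinatorial verification that these two cases exhaust all possibilities in finite type, i.e., that unique reduced expressions are rigid enough to prevent the first letter from recurring outside a purely dihedral configuration. In simply-laced finite type the Coxeter diagram is a tree and a zigzag cannot revisit its starting vertex without creating a braid pattern. In non-simply-laced finite type one uses the product decomposition $w=w_2 w_1$ from the preceding lemma together with $l\leqslant 2$: any simply-laced factor $w_2$ whose leftmost letter also appeared in the dihedral factor $w_1$ would permit a commutation or braid move across the boundary, contradicting uniqueness of the reduced expression.
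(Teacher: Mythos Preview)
Your proposal is correct and follows essentially the same route as the paper. Both arguments separate off the dihedral parabolic $\langle s,t\rangle$ (where the result is classical), and for $w\notin\langle s,t\rangle$ both rely on the same key combinatorial fact: the unique left descent $r$ of $w$ does not occur in $w'=rw$. From there the paper invokes the recursive formula for $h_{y,w}$ and splits on $ry\lessgtr y$, while you compute $\kl{r}\,\kl{w'}=(\std{r}+v)\kl{w'}$ directly and use the Bruhat decomposition $\{z\leqslant w\}=\{y'\leqslant w'\}\sqcup\{ry':y'\leqslant w'\}$; these are two equivalent ways of reading off the same identity.

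One remark: the paper simply asserts ``$r$ does not occur in $rw$'' without justification, whereas you correctly identify this as the crux and sketch why it holds. Your sketch is on the right track but could be sharpened: since the Coxeter graph in finite type is a tree and consecutive letters of a unique reduced expression are adjacent, the word $s_1\cdots s_k$ traces a walk with backtracks only permitted at the unique edge $\{s,t\}$; a closed sub-walk returning to $s_1$ would then force $s_1$ itself to lie on that edge, i.e.\ $s_1\in\{s,t\}$, which in turn (since $s_k=s$) traps the entire word in $\langle s,t\rangle$. This is exactly the dichotomy you describe.
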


\begin{proof}
   For all elements lying in the parabolic subgroup generated by $\{s, t\}$, the result is well known 
   (see \cite[Claim 2.1]{E3}) as this is simply a dihedral group. Choose $w \in \lcellFixedRightDesc{s}$ that does not 
   lie in this parabolic subgroup.
   
   By induction, we may assume that we haven proven the statement for all 
   $w' \in \lcellFixedRightDesc{s}$ such that $w'< w$.Let $\pi_s(w) = r \in S$. 
   Set $w' = rw < w$ and choose $y \leqslant w$. Note that $r$ does not occur in 
   $rw$. We obtain an inductive formula for the Kazhdan-Lusztig polynomials from 
   the left-handed multiplication formula in \cref{eqnLHMultForm} by expressing 
   each Kazhdan-Lusztig basis element in terms of the standard basis and by 
   comparing coefficients
   \begin{align*}
      h_{y, w} &= h_{ry, rw} + v^{c_y} h_{y, rw} - \sum_{\substack{y \leqslant z < rw \text{ s.t.}\\ rz < z}}
               \mu(z, rw) h_{y, z} \quad \text{ where }  c_y =
               \begin{cases}
                  1 & \text{if } ry > y \text{,} \\
                  -1 & \text{otherwise.}
               \end{cases} \\
      &= \begin{cases}
            h_{ry, rw} & \text{if } ry < y \text{,} \\
            vh_{y, rw} & \text{if } ry > y \text{.}
         \end{cases} \\
      &= v^{l(w) - l(y)}
   \end{align*}
   where the sum $\sum \mu(z, rw) h_{y, z}$ vanishes as $r$ does not occur in $rw$ and thus there cannot
   be an element $z < rw$ with $r \in \desc{L}(z)$. In addition, we used $y \nleqslant rw$ (resp. $ry \nleqslant rw$)
   if $ry < y$ (resp. if $ry > y$) and thus $h_{y, rw} = 0$ (resp. $h_{ry, rw} = 0$) for basically the same reason.
   In the last step we simply plugged in the Kazhdan-Lusztig polynomials for $w' = rw$ which is by induction
   rationally smooth.
\end{proof}

\begin{remark}
   \begin{itemize} 
      \item Note that the last lemma is no longer true if $s$ is not chosen among the unique pair
            $\{s, t\} \subseteq S$ with $m_{s, t} \geqslant 4$. Consider the example
            given by the following Coxeter graph:
            \[ \begin{tikzpicture}[node distance=1cm, auto, baseline=(current  bounding  box.center)]
                  \tikzstyle{vertex}=[circle, fill, inner sep=1.5pt, outer sep=0mm];
                  
                  \node [vertex, label=below:$s$] (v1) at (0,0) {};
                  \node [vertex, label=below:$t$, right of=v1] (v2) {};
                  \node [vertex, label=below:$u$, right of=v2] (v3) {};
                  \draw (v1) to (v2);
                  \draw (v2) to node[label=above:\scriptsize$4$] {} (v3);
               \end{tikzpicture} \]
            For $\Gamma_s$ we get:
            \[ \begin{tikzpicture}[node distance=1cm, auto, baseline=(current  bounding  box.center)]
                  \tikzstyle{vertex}=[circle, fill, inner sep=1.5pt, outer sep=0mm];
                  
                  \node [vertex, label=below:\small$s$] (v1) at (0,0) {};
                  \node [vertex, label=below:\small$ts$, right of=v1] (v2) {};
                  \node [vertex, label=below:\small$uts$, right of=v2] (v3) {};
                  \node [vertex, label=below:\small$tuts$, right of=v3] (v4) {};
                  \node [vertex, label=below:\small$stuts$, right of=v4] (v5) {};
                  \draw (v1) to (v2);
                  \draw (v2) to (v3);
                  \draw (v3) to (v4);
                  \draw (v4) to (v5);
               \end{tikzpicture} \]
           A calculation yields the following Kazhdan-Lusztig polynomials
           \begin{align*}
               h_{us, stuts} &= v^3 + v\\
               h_{u, stuts} &= v^4 + v^2\\
               h_{s, stuts} &= v^4 + v^2\\
               h_{e, stuts} &= v^5 + v^3
           \end{align*}
         which show that $stuts$ is not rationally smooth.
      \item The last lemma holds in slightly more generality: It is true for all Coxeter groups
            whose Coxeter graph is a tree with at most one pair $\{s, t\} \subseteq S$ such 
            that $m_{s, t} \geqslant 4$.
   \end{itemize}
\end{remark}

\begin{lem}
   \label{lemHoms}
   Assume that $(W, S)$ is of finite type.
   Let $x, y \in \lcellFixedRightDesc{s}$. Choose $w \in W$ the unique maximal element such that $w < x$ and $w < y$.
   Then $\Hom_{\sbim}^{\bullet}(B_x, B_y)$ is concentrated in degrees $\geqslant l(x) + l(y) - 2l(w )$
   and is of dimension $1$ in degree $l(x) + l(y) - 2l(w)$. 
   Moreover, the morphisms are concentrated in even (resp. odd) degrees if $l(x) - l(y)$ is even (resp. odd).
   In particular, there are non-zero morphisms of degree one from $B_x$ to $B_y$ if and only if 
   $x$ and $y$ are connected by an edge in $\Gamma_s$.
\end{lem}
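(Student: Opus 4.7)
The plan is to read off every assertion from a closed-form expression for the graded rank of $\Hom_{\sbim}^\bullet(B_x, B_y)$, obtained by combining Soergel's $\Hom$-formula (\cref{thmSoergelHom}) with the rational smoothness from \cref{lemCsSmooth}. Picking up the calculation in \eqref{eqnHomCalc} and substituting $h_{z,x} = v^{l(x) - l(z)}$ and $h_{z,y} = v^{l(y) - l(z)}$ for $z \leqslant x, y$, together with the fact that $z \leqslant x$ and $z \leqslant y$ hold if and only if $z \leqslant w$ (by the defining maximality of $w$), I obtain
\[ (\kl{x}, \kl{y}) = \sum_{z \leqslant w} v^{\,l(x) + l(y) - 2 l(z)}. \]

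From this formula the first three assertions are immediate: all exponents have the parity of $l(x) + l(y) \equiv l(x) - l(y) \pmod{2}$; the minimum exponent is $l(x) + l(y) - 2 l(w)$, attained exactly once (at $z = w$); and every other $z$ contributes a strictly larger power of $v$. Hence $\Hom_{\sbim}^\bullet(B_x, B_y)$ is concentrated in degrees $\geqslant l(x) + l(y) - 2 l(w)$, is one-dimensional in this minimal degree, and lives in a single parity.

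For the final clause, a degree-one morphism exists precisely when $l(x) + l(y) - 2 l(w) = 1$, which, combined with $l(w) \leqslant l(x), l(y)$, forces (after swapping $x$ and $y$ if necessary) $w = x$ and $y$ to cover $x$ in the Bruhat order. By \cref{lemCsSmooth}, $h_{x,y} = v$ and hence $\mu(x, y) = 1$, so \cref{lemGammaSFacts}(iii) reduces the remaining content to the purely combinatorial fact that $\desc{L}(x) \neq \desc{L}(y)$ for any Bruhat-covering pair inside $\lcellFixedRightDesc{s}$; the converse direction (edge yields degree-one morphism) then follows from the same length formula.

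The main obstacle is this last combinatorial claim. I would argue it by contradiction: assume $\desc{L}(x) = \desc{L}(y) = \{r\}$ and let $y = b_1 \cdots b_{n+1}$ be the unique reduced expression, with $b_1 = r$ and $b_{n+1} = s$. Obtain $x$ by deleting one letter $b_i$; the coincidence of left descents forces $i > 1$, while $s \in \desc{R}(x)$ together with $y$ being reduced forces $i \leqslant n$. Uniqueness of the reduced expression of $y$ forbids both $b_{i-1} b_i$ and $b_i b_{i+1}$ from commuting, and uniqueness of that of $x$ forbids the new adjacent pair $b_{i-1} b_{i+1}$ from commuting. This exhibits three distinct, pairwise non-commuting simple reflections, i.e., a triangle in the Coxeter graph --- impossible in finite type, where the Coxeter graph is a tree.
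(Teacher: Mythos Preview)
Your argument is correct and, for the bulk of the statement, identical to the paper's: both plug the rational-smoothness result of \cref{lemCsSmooth} into \eqref{eqnHomCalc} to obtain $(\kl{x},\kl{y})=\sum_{z\leqslant w} v^{l(x)+l(y)-2l(z)}$ and then read off the minimal degree, its multiplicity, and the parity.

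Where you diverge is the final clause. The paper does not argue it in detail; its proof leans on the opening observation that $w$ is the longest common suffix of the unique reduced expressions of $x$ and $y$ (the ``first shared vertex on the paths to $s$ in $\Gamma_s$''), from which $l(x)+l(y)-2l(w)=1$ immediately forces one of $x,y$ to equal $w$ and the other to be obtained by prepending a single simple reflection, hence an edge. You instead bypass this structural description of $w$ and argue combinatorially: a Bruhat-covering pair in $\lcellFixedRightDesc{s}$ with equal left descent sets would force three pairwise non-commuting simple reflections, i.e.\ a triangle in the Coxeter graph, impossible in finite type. Both routes work; yours is self-contained and does not rely on the (asserted but not proved) identification of $w$ with the common suffix, while the paper's is quicker once that identification is granted.
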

\begin{proof}
   First note that $w$ lies in $\lcellFixedRightDesc{s}$ and that it is the unique maximal subexpression shared by
   the unique reduced expressions of $x$ and $y$. (In the simply laced case it is the first vertex 
   the two paths from $x$ to $s$ and from $y$ to $s$ share.) Then plugging the results from
   \cref{lemCsSmooth} into \cref{eqnHomCalc} we get for the graded rank of 
   $\Hom_{\sbim}^{\bullet}(B_x, B_y)$:
   \begin{align*}
      (\kl{x}, \kl{y}) &= \sum_{z \leqslant w} h_{z, x} h_{z, y} \\
                       &= \sum_{z \leqslant w} v^{l(x) + l(y) - 2l(z)}
   \end{align*}
   
   
   The lowest degree term is $v^{l(x) + l(y) - 2l(w)}$. Applying Soergel's $\Hom$-formula  
   from \cref{thmSoergelHom} yields the result. The graded rank shows that depending
   on the parity of $l(x) - l(y)$ all generators are in either even or odd degree. Due to our
   choice of grading on $R$, this implies that all morphisms are concentrated in even (resp. odd)
   degrees if $l(x) - l(y)$ is even (resp. odd).
\end{proof}

\begin{remark}
   It should be noted that for $x,y \in W$ the parity vanishing of $\Hom_{\sbim}^{\bullet}(B_x, B_y)$
   is a more general fact that holds in any Coxeter system.
\end{remark}


\section{Categorified left cell modules}
\label{secCellModCat}

The goal of this section is to construct a categorification of the left cell module corresponding 
to $\lcellFixedRightDesc{s}$ by mimicking the definition of the left cell module on the categorical 
level. In order to do so, we will need some results that show that the Grothendieck
group behaves well with respect to suitable quotients and subcategories.

\begin{lem}
   \label{lemQuotKS}
   Let $\cat{C}$ be an essentially small Krull-Schmidt category and $X$ a subclass of the 
   indecomposable objects, closed under isomorphism. Let $\ideal{J}$ be the $2$-sided ideal of 
   all morphisms factoring through a finite direct sum of objects in $X$. Then the following holds:
   \begin{enumerate}
      \item $\cat{C} / \ideal{J}$ is a Krull-Schmidt category.
      \item Let $B$ be a set of representatives of all isoclasses of indecomposable objects in $\cat{C}$.
            A $\Z$-basis of $\Galg{\cat{C} / \ideal{J}}$ is given by $B \setminus X$.
      \item The functor $\cat{C} \rightarrow \cat{C} / \ideal{J}$ is additive and induces the following isomorphism of 
            abelian groups on the level of Grothendieck groups: 
            \[ \G{C} \; / \langle [M] \; \vert \; M \in X \rangle \quad \overset{\sim}{\longrightarrow} 
            \quad \Galg{\cat{C} / \ideal{J}} \]
   \end{enumerate}
   
   Assume in addition that $\cat{C}$ is graded, monoidal and that $X$ is closed under grading shifts.
   Furthermore, assume that no indecomposable object $Y$ is isomorphic to one of its grading shifts $Y(m)$
   for $m \in \Z$. Let $B$ be a set of representatives of all isoclasses of indecomposable objects 
   in $\cat{C}$ up to grading shift. Assume that $B \cap X$ is the union of all two-sided 
   cells belonging to a lower set in the two-sided cell preorder (see \cref{secHeckeAlg})
   of $\G{C}$ with respect to the $\Z[v, v^{-1}]$-basis $B$. Then the following holds:
   
   \begin{enumerate}
      \item $\cat{C} / \ideal{J}$ is a graded, monoidal category.
      \item A $\Z[v, v^{-1}]$-basis of $\Galg{\cat{C} / \ideal{J}}$ is given by $B \setminus X$.
      \item The functor $\cat{C} \rightarrow \cat{C} / \ideal{J}$ is a strict monoidal functor and 
            the isomorphism above is an isomorphism of $\Z[v, v^{-1}]$-algebras.
   \end{enumerate}
\end{lem}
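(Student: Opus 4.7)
The plan is to first establish statement (i) by identifying the indecomposables of $\cat{C}/\ideal{J}$ explicitly, then deduce (ii) and (iii) from the general description of split Grothendieck groups of Krull-Schmidt categories. The graded monoidal refinement will require one additional input: that $\ideal{J}$ is compatible with the tensor product, which is where the lower-set condition on the two-sided cell preorder enters.

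The key observation is a dichotomy between indecomposables in $X$ and those outside $X$. For $M \obj{C}$ indecomposable with $M \in X$, the identity $\id_M$ trivially factors through $M$ itself, so $\id_M \in \ideal{J}(M, M)$ and hence $M \cong 0$ in $\cat{C}/\ideal{J}$. Conversely, for indecomposable $M \notin X$, I claim $\id_M \notin \ideal{J}$: indeed, if $\id_M = g \circ f$ with $f: M \to \bigoplus_i X_i$ and $g: \bigoplus_i X_i \to M$ for some $X_i \in X$, then $f$ is a split monomorphism, so $M$ is a direct summand of $\bigoplus_i X_i$, and Krull-Schmidt in $\cat{C}$ forces $M \cong X_i$ for some $i$, contradicting $M \notin X$. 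Consequently $\End_{\cat{C}/\ideal{J}}(M) = \End_{\cat{C}}(M)/\ideal{J}(M, M)$ is a proper quotient of a local ring, hence local, so $M$ remains indecomposable in the quotient. Any object in $\cat{C}/\ideal{J}$ lifts to $\cat{C}$, where it decomposes as a finite sum of indecomposables; those lying in $X$ vanish after quotienting, giving a decomposition into indecomposables of $\cat{C}/\ideal{J}$ with local endomorphism rings. This proves (i) and identifies a set of representatives for the isoclasses of indecomposables as $B \setminus X$, which is (ii). For (iii), the quotient functor is additive by construction; the map $\G{C} / \langle [M] \mid M \in X \rangle \to \Galg{\cat{C}/\ideal{J}}$ induced by the quotient functor sends the basis $B \setminus X$ of the source bijectively onto the basis $B \setminus X$ of the target (by (ii)), hence is an isomorphism.

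For the graded monoidal part, the main new point is that $\ideal{J}$ is a monoidal ideal, i.e.\ for any $f \in \ideal{J}$ and any $C \obj{C}$, both $\id_C \otimes f$ and $f \otimes \id_C$ lie in $\ideal{J}$. If $f = g \circ h$ with $h: A \to N$, $g: N \to B$ and $N$ a finite direct sum of objects in $X$, then $f \otimes \id_C = (g \otimes \id_C) \circ (h \otimes \id_C)$ factors through $N \otimes C$, so it suffices to check that for any indecomposable $X_0 \in X$ (up to shift) the tensor product $X_0 \otimes C$ decomposes as a finite sum of indecomposables in $X$ (up to shift). This is exactly the content of the assumption that $B \cap X$ is a union of two-sided cells forming a lower set: in $\G{C}$, the class $[X_0 \otimes C] = [X_0][C]$ is a $\Z[v, v^{-1}]$-linear combination of basis classes in the same lower set, and since $X$ is closed under grading shifts, each summand of $X_0 \otimes C$ (up to shift) is an element of $X$. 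The same reasoning applies on the other side. Once $\ideal{J}$ is known to be a monoidal ideal compatible with the grading, the quotient $\cat{C}/\ideal{J}$ inherits a graded monoidal structure, and the arguments for (i)--(iii) go through with essentially no change, now giving an isomorphism of $\Z[v, v^{-1}]$-algebras.

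The most delicate step is the claim that $\id_M \notin \ideal{J}$ for indecomposable $M \notin X$; all else is essentially bookkeeping with Krull-Schmidt decompositions. In the graded monoidal setting, the second nontrivial step is verifying that $\ideal{J}$ is monoidal, which is precisely what the lower-set-of-two-sided-cells hypothesis is designed to guarantee.
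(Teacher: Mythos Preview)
The paper states this lemma without proof (the proof is omitted in the published version), so there is no author's argument to compare against. Your proof sketch is correct and covers all the essential points.

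One small gap worth making explicit: in deducing (ii), you need that two indecomposables $M, M' \in B \setminus X$ that are non-isomorphic in $\cat{C}$ remain non-isomorphic in $\cat{C}/\ideal{J}$. This follows from your argument but is not stated: if $f: M \to M'$ and $g: M' \to M$ give an isomorphism in the quotient, then $gf - \id_M \in \ideal{J}(M,M)$, and since you have shown $\ideal{J}(M,M)$ is a proper ideal of the local ring $\End_{\cat{C}}(M)$, it lies in the radical, so $gf$ is a unit in $\End_{\cat{C}}(M)$; similarly $fg$ is a unit, whence $f$ is already an isomorphism in $\cat{C}$.

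In the monoidal part, your use of the lower-set hypothesis is correct, but it might be worth spelling out why there is no cancellation: since $\cat{C}$ is Krull-Schmidt, the classes of indecomposables form a free $\Z[v,v^{-1}]$-basis of $\G{C}$, and the decomposition of $X_0 \otimes C$ into indecomposables yields non-negative structure coefficients. Hence the equation $[X_0][C] \in \langle [M] \mid M \in X \rangle$ in the Grothendieck group genuinely forces every indecomposable summand of $X_0 \otimes C$ to lie in $X$.
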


\begin{lem}
   \label{lemSubcatKS}
   Let $\cat{C}$ be an essentially small (graded) Krull-Schmidt category 
   such that no indecomposable object $Y$ is isomorphic to one of its grading 
   shifts $Y(m)$ for $m \in \Z$. Let $B$ be a subset of the representatives of 
   all isoclasses of indecomposable objects in $\cat{C}$ (up to grading shift). 
   Consider the full additive (graded) subcategory $\cat{C}_B$ of $\cat{C}$ 
   generated by all objects isomorphic to an object in $B$, also denoted 
   by $\langle M \; \vert \; M \in B \rangle_{\oplus}$ (resp. $\langle M \; 
   \vert \; M \in B \rangle_{\oplus, (-)}$ ). Then the following holds:
      \begin{enumerate}
      \item 
            $\cat{C}_B$ is a (graded) Krull-Schmidt category.
      \item $B$ is a $\Z$- (resp. $\Z[v, v^{-1}]$-) basis of $\Galg{\cat{C}_B}$.
      \item 
            The functor $\cat{C}_B \hookrightarrow \cat{C}$ induces the following isomorphism of 
            $\Z$- (resp. $\Z[v, v^{-1}]$-) modules on the level of Grothendieck-groups:
            \[ \Galg{\cat{C}_B} \quad \longrightarrow \quad \langle [M] \; \vert \; M \in B \rangle \; \subset \; \G{C}\]
   \end{enumerate}
\end{lem}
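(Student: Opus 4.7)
The plan is to exploit the fact that $\cat{C}_B$ is a \emph{full} additive subcategory of $\cat{C}$, so that all $\Hom$-spaces, and in particular all endomorphism rings, computed in $\cat{C}_B$ coincide with the corresponding ones in $\cat{C}$. Unpacking the definition, every object of $\cat{C}_B$ is isomorphic to a finite direct sum $\bigoplus_i M_i(n_i)$ with $M_i \in B$ and $n_i \in \Z$ (where all shifts are trivial in the ungraded case).

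For part (i), given such a decomposition, any indecomposable summand is, by fullness, an indecomposable object of $\cat{C}$, so the Krull-Schmidt property of $\cat{C}$ forces it to be isomorphic to one of the $M_i(n_i)$, and in particular to lie in $\cat{C}_B$. The local endomorphism rings required for Krull-Schmidt are inherited verbatim from $\cat{C}$. This simultaneously shows that the isoclasses of indecomposables in $\cat{C}_B$ are exactly the objects of $B$ in the ungraded case and exactly $\{M(n) \mid M \in B,\ n \in \Z\}$ in the graded case.

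For (ii), I would invoke the general fact that the split Grothendieck group of a Krull-Schmidt category is the free abelian group on the isoclasses of indecomposables. Combined with the hypothesis that no indecomposable is isomorphic to a nontrivial grading shift of itself, this yields that $B$ is a $\Z$- (resp.\ $\Z[v, v^{-1}]$-)basis of $\Galg{\cat{C}_B}$. For (iii), the inclusion $\cat{C}_B \hookrightarrow \cat{C}$ is additive on the nose (and grading-preserving in the graded setting), hence induces a well-defined map $\Galg{\cat{C}_B} \to \G{C}$ sending $[M]$ to $[M]$; its image is by construction $\langle [M] \mid M \in B \rangle$. Injectivity follows from the uniqueness (up to permutation and isomorphism) of indecomposable decompositions in $\cat{C}$: a nontrivial relation among the classes in $\G{C}$ of distinct elements of $B$ would immediately contradict Krull-Schmidt in $\cat{C}$.

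The main obstacle is essentially bookkeeping rather than mathematical: in the graded case one must verify carefully that shift-isomorphisms in $\cat{C}_B$ agree with those in $\cat{C}$ (again by fullness) and that the no-self-shift hypothesis is exactly what is needed to upgrade a naive $\Z$-basis indexed by $B \times \Z$ to a $\Z[v, v^{-1}]$-basis indexed by $B$. No monoidal structure enters here, so the statement is genuinely about Krull-Schmidt categories and their Grothendieck groups.
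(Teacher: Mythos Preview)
Your argument is correct and is the standard one. The paper in fact states this lemma without proof (treating it as a routine consequence of the Krull-Schmidt property), so there is nothing to compare against; your sketch fills in exactly the expected details, including the point that idempotent summands of objects in $\cat{C}_B$ remain in $\cat{C}_B$ by uniqueness of indecomposable decompositions in $\cat{C}$.
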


Consider in $W$ the set $\cellNonUniqueRex$ of all elements that do not admit a unique reduced expression. Note 
that $\cellNonUniqueRex$ is the union of all two-sided cells belonging to a lower set in the two-sided cell 
preorder of $\heck$ with respect to the Kazhdan-Lusztig basis. The Hasse diagram of the 
two-sided cell preorder in $\heck$ can be pictured as follows:
\begin{center}
   \begin{tikzpicture}[scale=0.75]
      \node (id) at (0,0) {$\{ \id \}$};
      \node[below of=id] (C) {$\cellUniqueRex$};
      \node[draw, dotted, below of=C, node distance=2cm, trapezium, trapezium left angle=60, 
            trapezium right angle=60, minimum height=1cm] (C') {all two-sided cells in $\cellNonUniqueRex$};
      \node[below of=C, xshift=0.3cm] {$\dots$};

      \draw (id) to (C);
      \draw (C) to (C'.165);
      \draw (C) to (C'.135);
      \draw (C) to (C'.15);
   \end{tikzpicture}
\end{center}
In order to see this, use the characterization of the two-sided cell preorder in terms of left
and right descent sets. Let $\ideal{J}$ be the two-sided ideal of all morphisms in $\sbim$ factoring through
a finite direct sum of objects in the class $\{ M \obj{\sbim} \; \vert \; M \cong B_w(k) 
\text{ for some } w \in \cellNonUniqueRex \text{ and } k \in \Z \}$.
\Cref{lemQuotKS} shows that the quotient category $\sbim / \ideal{J}$, which will also
be denoted by $\sbim \; / \langle B_w \; \vert \; w \in \cellNonUniqueRex \rangle_{\oplus, \, (-)}$,
is a graded monoidal Krull-Schmidt category.

Inside this category we will study the full additive graded subcategory $\catFixedRightDesc{s} = 
\langle B_w \; \vert \; w \in \lcellFixedRightDesc{s} \rangle_{\oplus, (-)}$. From \cref{lemSubcatKS} we see 
that $\catFixedRightDesc{s}$ is still a graded Krull-Schmidt category and that 
$\Galg{\catFixedRightDesc{s}}$ is a free $\Z[v, v^{-1}]$-module with $\{ [B_w] \; \vert \; 
w \in \lcellFixedRightDesc{s} \}$ as basis. In addition, $\Galg{\catFixedRightDesc{s}}$ sits inside
$\Galg{\sbim / \langle B_w \; \vert \; w \in \cellNonUniqueRex \rangle_{\oplus, \, (-)}}$ which 
by \cref{lemQuotKS} is isomorphic to $\G{\sbim} / \G{\sbim}(\lcell{LR} \cellUniqueRex)$ as a 
$\Z[v, v^{-1}]$-algebra. 

Note that for any element $y \in \lcellFixedRightDesc{s}$ and $x \in W$ such that $x \leqcell{L} y$, 
the characterization of the left cell preorder in terms of left descent sets implies that $x$ 
either lies in $\widetilde{C}$ or in $\lcellFixedRightDesc{s}$. Therefore the set 
$\{\lcell{L} \lcellFixedRightDesc{s} \}$ is contained in $\{ \lcell{LR} \cellUniqueRex \}$.
Due to $\heck (\lcell{L} \lcellFixedRightDesc{s}) \subseteq \heck(\lcell{LR} \cellUniqueRex)$, the following square can be completed:
\[ \begin{xy} 
   \xymatrix{
      {\heck(\leqcell{L} \lcellFixedRightDesc{s})} \ar@{^(->}[r] \ar@{->>}[d] & {\heck} \ar@{->>}[d] \\
      {\heck(\leqcell{L} \lcellFixedRightDesc{s}) / \heck (\lcell{L} \lcellFixedRightDesc{s})} \ar@{^(..>}[r] & 
         {\heck / \heck(\lcell{LR} \cellUniqueRex)} }
   \end{xy} \]
which shows that the left cell module corresponding to $\lcellFixedRightDesc{s}$ is isomorphic to
the submodule spanned by $\{ \kl{w} \; \vert \; w \in \lcellFixedRightDesc{s} \}$ inside 
$\heck / \heck(\lcell{LR} \cellUniqueRex)$.

After decategorifying all categorical constructions carry over to $\heck$ without
difficulty:
\begin{itemize}
   \item All ideals in $\G{\sbim}$ considered so far are generated by classes of 
         indecomposable Soergel bimodules.
   \item Soergel's categorification theorem states that $\G{\sbim}$ and $\heck$ are isomorphic as $\Z[v, v^{-1}]$-algebras.
   \item Soergel's conjecture implies that the  $\Z[v, v^{-1}]$-basis of $\G{\sbim}$ given by the classes of perverse 
         indecomposable Soergel bimodules is matched with the Kazhdan-Lusztig basis of $\heck$ under this isomorphism.
\end{itemize}
Therefore we get:  
\[ \begin{xy} 
   \xymatrix{
      {\Galg{\catFixedRightDesc{s}}} \ar[r]^-{\cong} & {\langle [B_w] \; \vert \; w \in \lcellFixedRightDesc{s} \rangle}
         \ar@{^(->}[r] \ar@{..>}[d]^-{\cong} & \G{\sbim} / \G{\sbim} ( \lcell{LR} \cellUniqueRex) \ar[d]^-{\cong} \\
      {\heck(\leqcell{L} \lcellFixedRightDesc{s}) / \heck (\lcell{L} \lcellFixedRightDesc{s})}
         \ar[r]^-{\cong} & {\langle \kl{w} \; \vert \; w \in \lcellFixedRightDesc{s} \rangle} \ar@{^(->}[r] & 
         {\heck / \heck(\lcell{LR} \cellUniqueRex)} }
   \end{xy} \]

Note that for any Soergel bimodule $M \in \sbim$ we have an additive endofunctor on $\catFixedRightDesc{s}$ given by
$M \otimes (-)$ because $\lcellFixedRightDesc{s}$ is a left cell in $\G{\sbim}$ and we have killed all indecomposables
indexed by some element $\lcell{L} \lcellFixedRightDesc{s}$ in $\sbim \; / \langle B_w \; \vert \; w \in \cellNonUniqueRex \rangle_{\oplus, \, (-)}$. This endofunctor descends to the $\Z[v, v^{-1}]$-module endomorphism on $\Galg{\catFixedRightDesc{s}}$ given by left multiplication with $[X]$. Therefore we have shown:

\begin{thm}
   \label{thmCat}
   $\Galg{\catFixedRightDesc{s}}$ is isomorphic to the left cell module 
   $\heck(\leqcell{L} \lcellFixedRightDesc{s}) / \heck (\lcell{L} \lcellFixedRightDesc{s})$ 
   corresponding to $\lcellFixedRightDesc{s}$. This isomorphism matches the action of 
   $\G{\sbim}$ on $\Galg{\catFixedRightDesc{s}}$ and the action of $\heck$ on 
   $\heck(\leqcell{L} \lcellFixedRightDesc{s}) / \heck (\lcell{L} \lcellFixedRightDesc{s})$.
\end{thm}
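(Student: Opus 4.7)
The proof is essentially an assembly of the pieces established in the paragraphs immediately preceding the statement; the plan is to chase $\Galg{\catFixedRightDesc{s}}$ through a commutative diagram of $\Z[v, v^{-1}]$-module isomorphisms culminating in the identification with the left cell module, and then to verify compatibility of the actions.

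First, I would apply \cref{lemSubcatKS} to $\catFixedRightDesc{s}$ (viewed as a full additive graded subcategory of the Krull--Schmidt quotient $\sbim / \ideal{J}$) to see that $\Galg{\catFixedRightDesc{s}}$ is a free $\Z[v, v^{-1}]$-module on $\{[B_w]\}_{w \in \lcellFixedRightDesc{s}}$ which embeds into $\Galg{\sbim / \ideal{J}}$. Then \cref{lemQuotKS} identifies the ambient group $\Galg{\sbim / \ideal{J}}$ with $\G{\sbim} / \G{\sbim}(\lcell{LR} \cellUniqueRex)$ as a $\Z[v, v^{-1}]$-algebra. Combining with Soergel's categorification theorem (\cref{thmSoergelCat}) and Soergel's conjecture (\cref{thmSoergelConj}) transports the whole picture to the Hecke algebra side, sending $[B_w]$ to $\kl{w}$ and landing in $\heck / \heck(\lcell{LR} \cellUniqueRex)$.

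Next, I would identify the left cell module $\heck(\leqcell{L} \lcellFixedRightDesc{s}) / \heck(\lcell{L} \lcellFixedRightDesc{s})$ with the $\Z[v, v^{-1}]$-submodule of $\heck / \heck(\lcell{LR} \cellUniqueRex)$ spanned by $\{\kl{w}\}_{w \in \lcellFixedRightDesc{s}}$. The key input is the containment $\heck(\lcell{L} \lcellFixedRightDesc{s}) \subseteq \heck(\lcell{LR} \cellUniqueRex)$, which follows from the descent-set characterization of the left cell preorder observed in the preceding paragraph; this makes the bottom edge of the commutative square above well defined and injective. Matching bases then yields the claimed $\Z[v, v^{-1}]$-module isomorphism.

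Finally, I would verify that the resulting isomorphism respects module structures. For any $M \obj{\sbim}$ and any $w \in \lcellFixedRightDesc{s}$, every indecomposable summand of $M \otimes B_w$ in $\sbim$ is of the form $B_v(k)$ with $v \leqcell{L} w$; since $\lcellFixedRightDesc{s}$ is a left cell, such a $v$ either lies in $\lcellFixedRightDesc{s}$ or in $\cellNonUniqueRex$, the latter being annihilated in $\sbim / \ideal{J}$. Consequently $M \otimes (-)$ restricts to an additive endofunctor of $\catFixedRightDesc{s}$, and on Grothendieck groups it corresponds to left multiplication by $[M]$ inside $\G{\sbim} / \G{\sbim}(\lcell{LR} \cellUniqueRex)$, hence to left multiplication by $\ch([M])$ on the left cell module by Soergel's results. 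The main obstacle is almost entirely cosmetic, since \cref{lemQuotKS,lemSubcatKS} together with Soergel's two theorems carry out the heavy lifting; the only step requiring genuine care is this action-compatibility check, which rests on $\lcellFixedRightDesc{s}$ being a full left cell rather than an arbitrary subset of $W$.
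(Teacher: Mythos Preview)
Your proposal is correct and follows essentially the same route as the paper: the paper's argument is precisely the discussion in the paragraphs preceding the theorem, which assembles \cref{lemQuotKS}, \cref{lemSubcatKS}, Soergel's categorification theorem, and Soergel's conjecture into the same commutative diagram you describe, and then observes that $M \otimes (-)$ restricts to an endofunctor of $\catFixedRightDesc{s}$ because $\lcellFixedRightDesc{s}$ is a left cell. Your write-up is a faithful rendering of that argument.
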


Next, we will define an action of $\brcat$ on $K^b(\catFixedRightDesc{s})$. Since we killed all
indecomposable Soergel bimodules indexed by elements in $\{ \lcell{L} \lcellFixedRightDesc{s}\}$
(and their grading shifts) in $\catFixedRightDesc{s}$, $K^b(\catFixedRightDesc{s})$ can be viewed as a module category
over the monoidal category $K^b(\sbim)$ (i.e. there exists a bifunctor $K^b(\sbim) \times K^b(\catFixedRightDesc{s})
\rightarrow K^b(\catFixedRightDesc{s})$ induced by the tensor product of the corresponding
complexes together with an associator and a left unitor satisfying the usual coherence conditions).
Restricting this action of $K^b(\sbim)$ on $K^b(\catFixedRightDesc{s})$ to $\brcat$ yields
our action. In particular, the action of $\br[(W, S)]$ on $K^b(\sbim)$ descends to $K^b(\catFixedRightDesc{s})$.
More explicitly, for $\sigma \in \br[(W, S)]$ we get an autoequivalence of $K^b(\catFixedRightDesc{s})$ via 
$F_{\sigma} \otimes (-)$. This yields a group homomorphism $\br[(W,S)] \rightarrow 
Iso(Aut(K^b(\catFixedRightDesc{s})))$, where $Iso(Aut(K^b(\catFixedRightDesc{s})))$ are 
the isomorphism classes of autoequivalences on $\catFixedRightDesc{s}$. We will show 
that this group homomorphism is faithful in finite type, even though its decategorification
is not faithful in general (see \cref{exDecatNonFaithful}).

\section{The perverse Filtration}
\label{secPervDef}

In this section we introduce the perverse t-structure on the homotopy category of Soergel bimodules
as described by Elias and Williamson in \cite[Remark 6.2]{EW2}. The main idea is to play out the 
two gradings on a complex of Soergel bimodules, namely the cohomological and the internal grading,
against each other and to consider linear complexes (see \cite{MaO, Ma1, Ma2, Ma3}).

Let $\cat{C}$ be the category of Soergel bimodules or a catorified left cell module
$\catFixedRightDesc{s}$ for some $s \in S$ as introduced in \cref{secCellModCat}.
We try to use the following convention throughout: $n$ denotes the cohomological degree,
$m$ the grading shift and $k$ possible multiplicites.

\begin{defn}
   \label{defPervBimod}
   Let $B \in \cat{C}$ be a Soergel bimodule. $B$ is called \emph{perverse} if 
   $B$ is a direct sum of copies of $B_w$ for $w \in W$ without grading shifts. 
   Fix a choice of decomposition of $B$ into indecomposable Soergel bimodules: 
   \[ B = \bigoplus_{\substack{w \in W \\ m \in \Z}}{B_w^{\oplus k_{w,m}}(m)} \]
	For $j \in \Z$ define the perverse filtration $\dots \subseteq \tauleq{j-1}B 
   \subseteq \tauleq{j}B \subseteq \dots$ of $B$ as follows:
	\[ \tauleq{j}B \defeq \bigoplus_{\substack{w \in W \\ m \geqslant -j}} 
   {B_w^{\oplus k_{w,m}}(m)} \text{.} \]
   Set $\taul{j} \defeq \tauleq{j-1}$ and define $\taugeq{j} B = B/\taul{j}B$.
   Similarly set $\taug{j} \defeq \taugeq{j+1}$. Define the $j$-th perverse cohomology as:
   \[ \pH{j}(B) \defeq (\tauleq{j}(B)/\taul{j}(B))(j) \text{.} \]
\end{defn}

By definition the perverse filtration of a Soergel bimodule always splits 
and subquotients of the perverse filtration are isomorphic to shifted perverse
Soergel bimodules. More explicitly $\tauleq{j} B / \taul{j}B$ is isomorphic 
to $C(-j)$ for  some perverse Soergel bimodule $C \in \cat{C}$ and contains
exactly all those indecomposable summands $B_w(-j)$ of $B$ for $w \in W$. The perverse 
cohomology shifts the subquotients back in order for them to be perverse. Thus 
in this case $\pH{j}(B)$ exactly gives the perverse Soergel bimodule $C$.

\begin{lem}
   Let $j \in \Z$. $\tauleq{j}(-)$ and $\taugeq{j}(-)$ give well-defined additive 
   endofunctors on the category of Soergel bimodules. $\pH{j}(-)$ defines an
   additive functor from the category of Soergel bimodules to the full additive subcategory
   of perverse Soergel bimodules.
\end{lem}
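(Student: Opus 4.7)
The plan is to reduce the statement to the fact, provided by \cref{corHomNonNegDeg}, that $\Hom^{\bullet}(B_w, B_{w'})$ is concentrated in non-negative degrees for all $w, w' \in W$. This encodes a strong lower-triangularity of the morphism spaces with respect to the grading shift and will make all three assignments functorial.

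First I would address well-definedness on objects. The definition of $\tauleq{j}B$ depends a priori on a choice of decomposition of $B$ into indecomposables, but by Krull--Schmidt (\cref{lemKrullSchmidt}) the multiplicities $k_{w,m}$ are invariants of $B$. Any two decompositions of $B$ are related by an automorphism of $B$ that carries one choice of $\tauleq{j}B$ to the other, so $\tauleq{j}B$ is well-defined up to unique isomorphism, and similarly for $\taugeq{j}B = B/\taul{j}B$ and $\pH{j}(B)$.

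The main step is extending these assignments to morphisms. Given $f: B \to B'$, I would write it as a matrix with respect to fixed decompositions. The $(B_w(m), B_{w'}(m'))$-component is a degree-zero morphism $B_w(m) \to B_{w'}(m')$, equivalently an element of $\Hom^{\bullet}(B_w, B_{w'})_{m' - m}$, and hence vanishes whenever $m' < m$ by \cref{corHomNonNegDeg}. Consequently, starting from a summand $B_w(m) \subseteq \tauleq{j}B$ (so $m \geq -j$), any nonzero component of $f$ into $B_{w'}(m')$ forces $m' \geq m \geq -j$, placing the target in $\tauleq{j}B'$. Thus $f$ restricts to a morphism $\tauleq{j}f: \tauleq{j}B \to \tauleq{j}B'$, and composition and identities are preserved tautologically, yielding the functor $\tauleq{j}(-)$. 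Replacing $j$ by $j - 1$, the same argument shows $f$ sends $\taul{j}B$ into $\taul{j}B'$, so $\taugeq{j}(-)$ descends to an endofunctor as a quotient functor.

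Finally, $\pH{j}(-) = (\tauleq{j}(-)/\taul{j}(-))(j)$ is the composition of the above with a grading shift, hence functorial; its values are perverse because $\tauleq{j}B/\taul{j}B$ contains only summands $B_w(-j)$, and the shift $(j)$ turns these into $B_w$. Additivity of all three functors is immediate: direct sums commute with Krull--Schmidt decompositions, so $k_{w,m}(B \oplus B') = k_{w,m}(B) + k_{w,m}(B')$, and the constructions split accordingly. The only place the argument could have broken down was in morphisms mixing summands of different grading shifts, but this is precisely what \cref{corHomNonNegDeg} prevents; no further input is required.
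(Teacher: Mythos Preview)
Your proof is correct and follows the same approach as the paper: the paper's proof simply says the result is an easy consequence of \cref{corHomNonNegDeg}, and you have spelled out precisely how that corollary is used via the block-matrix argument. One small sharpening: your observation that components $B_w(m)\to B_{w'}(m')$ vanish for $m'<m$ in fact shows that $\tauleq{j}B$ is independent of the decomposition as a \emph{subobject} of $B$ (not merely up to isomorphism), which is what makes the restriction $\tauleq{j}f$ genuinely canonical rather than defined only up to conjugation by automorphisms.
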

\begin{proof}
   This is an easy consequence of the fact that $\dim \Hom_{\cat{C}}(B_x, B_y) = \delta_{x, y}$ 
   for $x, y \in W$ and that the morphisms of all degrees between two indecomposable 
   Soergel bimodules are concentrated in non-negative degrees.
\end{proof}

The following few results on homotopy minimal complexes hold in any Krull-Schmidt 
category. For concreteness, we will state them for the category of Soergel bimodules.

\begin{defn}
   A complex $F \in C^b(\cat{C})$ is called \emph{minimal} if $F$ does not 
   contain a contractible direct summand.
\end{defn}

Since we use right superscripts to indicate homogeneous components of a graded module, 
we will use left superscripts to indicate the cohomological degree whenever we work 
with cochain complexes of graded modules.

It is easy to see that a complex of the form $\dots \rightarrow 0 \rightarrow X 
\overset{\phi}{\rightarrow} Y \rightarrow 0 \rightarrow \dots$ where $\phi$ is an isomorphism 
in $\cat{C}$ is contractible. The following result is due to Bar-Natan (see \cite{BN2}):

\begin{lemlab}[Gaussian elimination]
   \label{corGaussElim}
   Given a complex $F \in C^b(\cat{C})$ which looks in cohomological degrees $n$ and $n+1$ as follows:
   \[\begin{xy}
      \xymatrix{
         {\dots} \ar[r] & \prescript{n-1}{}{F} \ar[r]^-{d^{n-1}} & M \oplus B \ar[r]^-{\left( 
            \begin{smallmatrix} \alpha & \beta \\ \gamma & \delta \end{smallmatrix} \right)}
            &  M' \oplus B' \ar[r]^-{d^{n+1}} & \prescript{n+2}{}{F} \ar[r] & {\dots} }
      \end{xy} \]
   where $\delta: B \rightarrow B'$ is an isomorphism, $F$ is homotopy equivalent to a 
   complex $F' \in C^B(\cat{C})$ which agrees with $F$ outside the cohomological degrees $n$ and $n+1$
   and looks in these two degrees like:
   \[\begin{xy}
      \xymatrix{
         {\dots} \ar[r] & \prescript{n-1}{}{F} \ar[rr]^-{pr_M \circ d^{n-1}} && M 
            \ar[rr]^-{\alpha - \beta \circ \delta^{-1} \circ \gamma}
            &&  M' \ar[rr]^-{d^{n+1} \circ \iota_{M'}} && \prescript{n+2}{}{F} \ar[r] & {\dots} }
      \end{xy} \text{.} \]
   Moreover, $F'$ is a direct summand of $F$. We will call the passage 
   from $F$ to $F'$ a \emph{Gaussian elimination with respect to $\delta$}.
\end{lemlab}

Given any complex $F \in C^b(\cat{C})$ one can successively eliminate contractible direct
summands to obtain a direct summand $F_{\text{min}} \overset{\oplus}{\subseteq} F$ such that
$F_{\text{min}}$ is minimal. In particular, $F$ is isomorphic to $F_{\text{min}}$ in
$K^b(\cat{C})$ as $F_{\text{min}}$ is homotopy equivalent to $F$ via the inclusion and projection.

\todo[inline]{Add references for the next two results!?}

\begin{lem}
   For a complex $F \in C^b(\cat{C})$ any two minimal complexes $F_1, F_2 
   \overset{\oplus}{\subseteq} F$ are isomorphic in $C^b(\cat{C})$.
\end{lem}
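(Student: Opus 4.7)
The plan is to reduce the claim to a key fact about minimal complexes in any Krull-Schmidt category: \emph{every chain endomorphism of a minimal bounded complex that is chain homotopic to the identity is a chain-level isomorphism.} Granting this, write $F = F_i \oplus G_i$ with $G_i$ contractible, and note that the inclusion $\iota_i: F_i \hookrightarrow F$ and projection $\pi_i: F \twoheadrightarrow F_i$ are mutually inverse homotopy equivalences (since $\iota_i \pi_i$ differs from $\id_F$ by a map factoring through the contractible $G_i$). Hence $f = \pi_2 \iota_1: F_1 \to F_2$ and $g = \pi_1 \iota_2: F_2 \to F_1$ satisfy $gf \simeq \id_{F_1}$ and $fg \simeq \id_{F_2}$, and the key fact applied to $gf$ and $fg$ forces $f$ itself to be an isomorphism in $C^b(\cat{C})$.

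To prove the key fact, write $\psi - \id_{F_1} = ds + sd$ and work degreewise. Fix a cohomological degree $n$ and use the Krull-Schmidt decomposition $F_1^n = \bigoplus_i X_i^{n_i}$ with pairwise non-isomorphic indecomposables $X_i$. The endomorphism ring $\End(F_1^n)$ is semiperfect (each $\End(X_i)$ is local), so $\psi^n$ is invertible iff its image in $\End(F_1^n)/\rad\End(F_1^n)$ is invertible. Since this semisimple quotient is block-diagonal with one block per isomorphism class $X_i$, it suffices to check that the components of $\psi^n - \id$ between summands isomorphic to a fixed $X_i$ lie in the radical. Each such component is a sum of compositions $X_i \to Z \to X_i$, where $Z$ is an indecomposable summand of $F_1^{n \pm 1}$ and one factor is a component of the differential $d$. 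Minimality of $F_1$ ensures via Gaussian elimination (\cref{corGaussElim}) that no component of $d$ between isomorphic indecomposable summands is an isomorphism; in the local ring $\End(X_i)$ any non-isomorphism is radical, so such differential components are radical. When $Z \not\cong X_i$, the whole composition $X_i \to Z \to X_i$ lies in the radical since otherwise $X_i$ would split off the indecomposable $Z$. Summing radical contributions keeps us in the radical, so $\psi^n$ is invertible, and the degreewise inverses assemble into a chain-level inverse to $\psi$.

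The step requiring most care is the semiperfect/radical bookkeeping: one must verify that the diagonal-block-modulo-radical is genuinely the obstruction to invertibility of $\psi^n$, and that compositions involving the homotopy $s$ (whose components are arbitrary, not necessarily radical) still land in the radical once combined with a radical component of $d$. Once this is in place, applying the key fact to $gf$ and $fg$ yields the desired isomorphism $F_1 \cong F_2$ in $C^b(\cat{C})$.
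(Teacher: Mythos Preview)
Your argument is correct and is the standard proof of this fact. The paper itself does not prove the lemma: it is stated without proof, with an inline note indicating the author intended to add a reference, so there is no paper proof to compare against.

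Two small remarks on points you already flagged. First, you tacitly assume that the complementary summand $G_i$ is contractible; the lemma as literally stated only asserts that $F_i$ is minimal and a direct summand, but the sentence preceding the lemma (constructing $F_{\min}$ by successively stripping contractible summands) makes clear this is the intended hypothesis, so your reading is the right one. Second, your radical bookkeeping is sound: since $\End(\prescript{n}{}{F_1})/\rad \cong \prod_i M_{n_i}(D_i)$ with one block per isomorphism class of indecomposable summand, components of $(ds+sd)$ between non-isomorphic summands die automatically in the quotient, and your retract argument correctly shows the same-class components land in $\rad\End(X_i)$. Once each $\psi^n$ is a unit, the degreewise inverses automatically commute with the differential, so they assemble to a chain inverse as you claim.
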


\begin{cor}
   \label{corHomEquiMinComp}
   Homotopy equivalent minimal complexes in $C^b(\cat{C})$ are isomorphic in $C^b(\cat{C})$.
\end{cor}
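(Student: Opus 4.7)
The plan is to show that any chain homotopy equivalence $f \colon F_1 \to F_2$ between minimal complexes is already an isomorphism in $C^b(\cat{C})$. Let $g \colon F_2 \to F_1$ be a chain-level homotopy inverse. If we can establish that $gf$ and $fg$ are chain automorphisms of $F_1$ and $F_2$ respectively, then $f$ admits $(gf)^{-1} \circ g$ as a chain-level left inverse and $g \circ (fg)^{-1}$ as a chain-level right inverse, and is therefore a chain isomorphism.

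The heart of the argument is thus the following sub-claim: if $F \in C^b(\cat{C})$ is minimal and $\phi \colon F \to F$ is a chain map chain-homotopic to $\id_F$, then $\phi$ is invertible in $C^b(\cat{C})$. To prove this, write $\phi - \id_F = dh + hd$ for some homotopy $h$ of degree $-1$, and examine $\phi^n \colon F^n \to F^n$ in each cohomological degree. Decompose $F^n$ into indecomposable summands using that $\cat{C}$ is Krull-Schmidt (\cref{lemKrullSchmidt}). In the matrix representation of $\phi^n$, the diagonal entries take the form $\id_X + \epsilon_X$ for an indecomposable summand $X$, with $\epsilon_X$ a component of $dh + hd$, while the off-diagonal entries come entirely from $dh + hd$.

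The crucial observation is that minimality of $F$ forces every component of the differential $d$ between indecomposable summands to lie in the Jacobson radical of $\cat{C}$: an isomorphism component would permit a Gaussian elimination (\cref{corGaussElim}), contradicting minimality. Since the categorical radical is a two-sided ideal, every component of $dh$ and $hd$ also lies in the radical. Consequently, the matrix of $\phi^n$ has diagonal entries that are units in the local endomorphism rings $\End_{\cat{C}}(X)$ (because $\id_X$ plus a radical element is a unit) and off-diagonal entries in the radical. A standard argument for Krull-Schmidt categories then implies $\phi^n$ is invertible in $\End_{\cat{C}}(F^n)$ for every $n$; since $\phi$ is a chain map, the degree-wise inverse automatically commutes with $d$ and yields a chain-level two-sided inverse. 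The main technical obstacle is this radical step: one must carefully justify that minimality forces differential components into the categorical radical and that radical morphisms are closed under composition on either side. This is routine in the Krull-Schmidt framework but it is the linchpin of the entire argument.
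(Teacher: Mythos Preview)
Your argument is correct and is the standard proof of this fact in Krull--Schmidt categories. The key step---that minimality forces every component of the differential to lie in the categorical radical, so that $dh+hd$ is radical and $\id + (dh+hd)$ is invertible degreewise---is exactly right, and your observation that the degreewise inverse of a chain map is again a chain map closes the argument cleanly.

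The paper itself does not supply a proof: the corollary is stated immediately after the preceding lemma (uniqueness of minimal direct summands inside a fixed complex) and both results carry a note indicating that references are to be added. So there is no detailed paper-proof to compare against. If one tries to deduce the corollary literally from that lemma, there is a small gap---two homotopy equivalent minimal complexes are not a priori direct summands of a common complex---so one ends up needing essentially the same radical argument you give to bridge it. In that sense your direct approach is the cleaner route and is what the standard references (e.g.\ Krause's treatment of Krull--Schmidt categories) actually do.
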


The filtration on $\cat{C}$ from \Cref{defPervBimod} induces a split diagonal 
filtration on the full subcategory $C^b(\cat{C})_{\text{min}}$ of minimal 
complexes in $C^b(\cat{C})$ as follows:

\begin{defn}
	Let $F$ be a minimal, bounded complex of Soergel bimodules. 
	For $j \in \Z$ define the \emph{perverse filtration} $\dots \subseteq \tauleq{j-1}F 
   \subseteq \tauleq{j}F \subseteq \dots$ of $F$ as follows:
	\[ \prescript{n}{}{(\tauleq{j}F)} \defeq \tauleq{j-n}(\prescript{n}{}{F}) \]
   Define $\taul{j}$, $\taugeq{j}$ and $\taug{j}$ as above. Define the $j$-th perverse 
   cohomology as:
   \[ \pH{j}(F) \defeq (\tauleq{j}(F)/\taul{j}(F))[j] \]
\end{defn}

Note that $\tauleq{j}F$ is a well-defined subcomplex of $F$ because Soergel's $\Hom$-formula implies that the
homomorphisms of all degrees from $B_x$ to $B_y$ for $x, y \in W$ are concentrated in non-negative 
degrees and that there can only be homomorphisms of degree $0$ if $x=y$ and in that 
case every non-zero homomorphism is invertible. Thus non-zero degree $0$ homomorphisms
cannot occur as components in minimal complexes and for every non-zero component
$B_x(m_x) \rightarrow B_y (m_y)$ of the differential in $F$ we have $m_y > m_x$.
In addition, for any minimal complex $F$ of Soergel bimodules as above, there is 
a level-wise split short exact sequence in $C^b(\cat{C})$ for all $j \in\Z$
\[ 0 \longrightarrow \tauleq{j}(F)  \longrightarrow F \longrightarrow \taug{j}(F) \longrightarrow 0 \]
which induces a distinguished triangle in the homotopy category $K^b(\cat{C})$.

The main idea of the next definition is to extend our previous definitions to $K^b(\cat{C})$ via the
equivalence of categories induced by $C^b(\cat{C})_{\text{min}} \hookrightarrow C^b(\cat{C})$
on the level of homotopy categories.

\begin{defn}
   \label{defPervTStruct}
	Let $\KCgeq{0}$ be the full subcategory of $K^b(\cat{C})$ consisting of all complexes which
   are isomorphic to a minimal complex $F \in K^b(\cat{C})$ such that $\taul{0}(F)$ vanishes.
   
   Let $\KCleq{0}$ be the full subcategory of $K^b(\cat{C})$ consisting of all complexes which
   are isomorphic to a minimal complex $F \in K^b(\cat{C})$ such that $\taug{0}(F)$ vanishes.
   
   A complex $F \in K^b(\cat{C})$ is called \emph{linear} or \emph{perverse} if it lies
   in $\KCgeq{0} \cap \KCleq{0}$. A \emph{perverse shift} is a simultaneous shift of the form
   $(-k)[k]$ for some $k \in \Z$.
\end{defn}

\begin{prop}
   The following statements are equivalent for a complex $F \in K^b(\cat{C})$:
   \begin{enumerate}
      \item $F \in \KCgeq{0}$.
      \item Any minimal complex $F' \in C^b(\cat{C})$ which is isomorphic to $F$ in $K^b(\cat{C})$
            satisfies $\taul{0}(F') = 0$.
   \end{enumerate}
\end{prop}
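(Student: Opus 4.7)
My plan is to reduce the statement to the Krull-Schmidt property of $\cat{C}$ and to the uniqueness of minimal representatives up to isomorphism in $C^b(\cat{C})$, which is \cref{corHomEquiMinComp}.

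The implication (ii)$\Rightarrow$(i) is immediate from the definition. Every complex in $K^b(\cat{C})$ admits a minimal representative, obtained by iterated Gaussian elimination (\cref{corGaussElim}). If $F'$ is such a minimal representative of $F$, then (ii) asserts $\taul{0}(F') = 0$, which exhibits $F$ as an object of $\KCgeq{0}$ directly from \cref{defPervTStruct}.

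For the converse, suppose $F \in \KCgeq{0}$, so that by definition there exists a minimal complex $F''$ with $F \cong F''$ in $K^b(\cat{C})$ and $\taul{0}(F'') = 0$. Let $F'$ be any minimal complex isomorphic to $F$ in $K^b(\cat{C})$. Then $F'$ and $F''$ are both minimal complexes that are homotopy equivalent (via $F$), so \cref{corHomEquiMinComp} provides an isomorphism $F' \cong F''$ in $C^b(\cat{C})$. It remains to argue that the condition $\taul{0}(-) = 0$ is preserved under isomorphism in $C^b(\cat{C})$. Unwinding \cref{defPervBimod}, $\taul{0}(F') = 0$ means that in each cohomological degree $n$, no indecomposable summand $B_w(m)$ of $\prescript{n}{}{F'}$ satisfies $m > -n$. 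Since $\cat{C}$ is Krull-Schmidt (\cref{lemKrullSchmidt}) and no indecomposable is isomorphic to a non-trivial grading shift of itself, the multiplicities $k_{w, m}$ appearing in the decomposition of $\prescript{n}{}{F'}$ are determined by its isomorphism class. These multiplicities therefore agree with those of $\prescript{n}{}{F''}$ in each cohomological degree, and $\taul{0}(F'') = 0$ forces $\taul{0}(F') = 0$.

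The substantive content of the proof is the uniqueness of minimal representatives in $C^b(\cat{C})$ (\cref{corHomEquiMinComp}); the main obstacle is essentially bookkeeping, namely checking that although the perverse filtration is defined via a chosen indecomposable decomposition, the condition of having $\taul{0}$ vanish is an isomorphism invariant in $C^b(\cat{C})$. Once Krull-Schmidt is invoked to handle this invariance, the equivalence drops out of the definition of $\KCgeq{0}$.
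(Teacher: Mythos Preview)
Your proof is correct and follows essentially the same route as the paper: both directions rest on \cref{corHomEquiMinComp} together with the Krull--Schmidt property of $\cat{C}$ to conclude that the vanishing of $\taul{0}$ is an isomorphism invariant of minimal complexes. Your write-up is simply more explicit, in particular in noting that (ii)$\Rightarrow$(i) requires the existence of a minimal representative.
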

\begin{proof}
   ii) obviously implies i). Since homotopy equivalent minimal complexes in $C^b(\cat{C})$ are isomorphic 
   in $C^b(\cat{C})$ by \cref{corHomEquiMinComp} and the Krull-Schmidt theorem holds in $\cat{C}$, we see that the converse also holds. 
   See \cite[Proposition 3.2.1]{Kr} for the proof that the Krull-Schmidt theorem holds in any 
   Krull-Schmidt category.
\end{proof}

Since this definition is a little technical, let us try to explain it a little: For a minimal complex in $\KCgeq{0}$
an indecomposable Soergel bimodule occuring in cohomological degree $n$ is of the form $B_w (m)$ 
for $m \leqslant n$. Loosely speaking, in $\KCgeq{0}$ the possible shifts of the occuring 
indecomposable are bounded above by the cohomological degree in which the module occurs. For $\KCleq{0}$
replace ``above'' by ``below'' in the last slogan.

So, how should we visualize this definition? For a minimal complex one can keep
track of the indecomposable bimodules occuring in the complex in the form of a table, where 
the columns denote the cohomological degree and the rows correspond 
to the grading shift. We will choose the following convention that
the columns (resp. rows) are labelled by integers in increasing order 
from left to right (resp. from top to bottom). Thus we get a table of the
following form \\
\begin{center}
   \begin{tabular}{ l || c | c | c | c | c }
       & -2 & -1 & 0 & 1 & 2  \\ \hline \hline
    -2 & \cellcolor{black!25} & & & & \\ \hline
    -1 & & \cellcolor{black!25} & & &  \\ \hline
     0 & & & \cellcolor{black!25} & &  \\ \hline
     1 & & & & \cellcolor{black!25} &  \\ \hline
     2 & & & & & \cellcolor{black!25} 
   \end{tabular}
\end{center}
where an indecomposable Soergel bimodule $B_w(m)$ occuring in cohomological degree $n$
appears in the column labelled with $n$ and the row labelled with $m$.
Therefore a minimal complex $F \in \KCgeq{0}$ (resp. $F \in \KCleq{0}$) has only entries in 
cells on or above (resp. below) the marked grey diagonal. The entries in such a table
for a minimal perverse complex are restricted to the grey diagonal.

\begin{lem}
   $(\KCleq{0}, \KCgeq{0})$ gives a non-degenerate t-structure on $K^b(\cat{C})$, 
	the \emph{perverse t-structure} on the homotopy category of Soergel bimodules.
\end{lem}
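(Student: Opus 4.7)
My plan is to verify the three axioms of a t-structure together with non-degeneracy, working throughout with minimal complex representatives; this costs nothing by \cref{corHomEquiMinComp}. The convenient bookkeeping device is the ``perverse degree'' $p \defeq n - m$ attached to a summand $B_w(m)$ sitting in cohomological degree $n$ of a minimal complex $F$. Unwinding \cref{defPervBimod} shows that, for $F$ minimal, $F \in \KCgeq{0}$ (resp.\ $F \in \KCleq{0}$) is equivalent to every occurring $p$ being $\geq 0$ (resp.\ $\leq 0$), matching the diagonal picture above.

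Given this dictionary, the shift axioms $\KCleq{0}[1] \subseteq \KCleq{0}$ and $\KCgeq{0}[-1] \subseteq \KCgeq{0}$ are immediate since $[\pm 1]$ shifts $p$ by $\mp 1$ on every summand, and non-degeneracy follows from the boundedness of minimal complexes: only finitely many values of $p$ occur in any given $F$, so lying in every $\KCleq{-n}$ or every $\KCgeq{n}$ forces $F = 0$. For the $\Hom$-vanishing, let $F \in \KCleq{0}$ and $G \in \KCgeq{1}$ both be minimal. A chain map $F \to G$ decomposes into degree-zero components $B_w(m) \to B_{w'}(m')$, which correspond to elements of $\Hom^{\bullet}(B_w, B_{w'})$ in internal degree $m' - m$; by \cref{corHomNonNegDeg} these vanish unless $m' \geq m$. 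But at each level $n$ the constraints force $m \geq n$ (from $F \in \KCleq{0}$) and $m' \leq n - 1$ (from $G \in \KCgeq{1}$), so $m' < m$ and every component is zero. Hence $\Hom_{K^b(\cat{C})}(F, G) = 0$.

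For the distinguished triangle axiom, I represent an arbitrary $X \in K^b(\cat{C})$ by a minimal complex $F$ and consider the perverse filtration from \cref{defPervBimod}. The levelwise direct-sum splitting of $F^n$ into its perverse parts yields a short exact sequence of graded bimodules in each cohomological degree; the key check is that this is a sequence of complexes, i.e., that $\tauleq{0}F$ is closed under the differential of $F$. But in a minimal complex every nonzero differential component $B_w(m) \to B_{w'}(m')$ satisfies $m' > m$: such a component is a degree $m' - m$ element of $\Hom^{\bullet}(B_w, B_{w'})$, which by \cref{corHomNonNegDeg} forces $m' \geq m$, while $m' = m$ would yield either a zero component (for $w \neq w'$, again by \cref{corHomNonNegDeg}) or a nonzero scalar endomorphism ruled out by minimality. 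Hence $m' \geq m + 1$, so a source with $m \geq n$ forces target $m' \geq n + 1$ at degree $n+1$, keeping us inside $\tauleq{0}F$. The resulting levelwise split short exact sequence
\[
   0 \longrightarrow \tauleq{0}F \longrightarrow F \longrightarrow \taugeq{1}F \longrightarrow 0
\]
descends to a distinguished triangle in $K^b(\cat{C})$ with $\tauleq{0}F \in \KCleq{0}$ and $\taugeq{1}F \in \KCgeq{1}$. The single nontrivial ingredient powering the whole argument is the positivity statement \cref{corHomNonNegDeg}, which is where I expect any remaining subtlety to live; the rest is essentially bookkeeping with the perverse degree.
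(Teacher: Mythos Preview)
Your proof is correct and follows essentially the same route as the paper's: both arguments pivot on \cref{corHomNonNegDeg} for the $\Hom$-vanishing and for the well-definedness of the truncation, and both deduce non-degeneracy from minimality together with boundedness. The only differences are cosmetic: you package things via the perverse degree $p = n - m$, and you re-derive the distinguished triangle inside the proof whereas the paper had already recorded the levelwise split short exact sequence (and hence the triangle) in the paragraph preceding \cref{defPervTStruct}.
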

\begin{proof}
   By definition $\KCgeq{0}$ and $\KCleq{0}$ are full subcategories. They are both proper because we have for example
   $F_s[1] \nin \KCgeq{0}$ and $F_s[-1] \nin \KCleq{0}$.
   
   Set $\KCleq{n} \defeq \KCleq{0}[-n]$ and $\KCgeq{n} \defeq \KCgeq{0}[-n]$. Recall that an object $F \in K^b(\cat{C})$
   lies in $\KCleq{0}$ if and only if for all $n \in \Z$ the shifts of the indecomposables occuring in cohomological
   degree $n$ of a minimal complex isomorphic to $F$ are bounded below by $n$. Therefore we see that 
   a complex $F \in K^b(\cat{C})$ lies in $\KCleq{1}$ if and only if for all $n \in \Z$ the shifts of the 
   indecomposables occuring in cohomological degree $n$ of a minimal complex isomorphic to $F$ are 
   bounded below by $n-1$. This implies $\KCleq{0} \subseteq \KCleq{1}$.
   
   Next, we want to show the $\Hom$-vanishing condition. Let $X, Y \in K^b(\cat{C})$ be minimal complexes such
   that $X$ lies in $\KCleq{0}$ and $Y$ can be found in $\KCgeq{1}$. Since
   $\Hom_{K^b(\cat{C})}(X, Y)$ is a quotient of $\Hom_{C^b(\cat{C})}(X, Y)$ and a morphism $f$ in $\Hom_{C^b(\cat{C})}(X, Y)$
   is a family of morphisms $\{f_n\}_{n \in \Z}$ such that $f_n \in 
   \Hom_{\cat{C}}(\prescript{n}{}{X}, \prescript{n}{}{Y})$ and $d_n^Y \circ f_n = f_{n+1} \circ d_n^X$,
   it suffices to see that $\Hom_{\cat{C}}(\prescript{n}{}{X}, \prescript{n}{}{Y})$ vanishes for all
   $n \in \Z$. This follows again from Soergel's $\Hom$-formula (see \cref{corHomNonNegDeg}) as for $n \in \Z$
   the shifts of the indecomposables occuring in $\prescript{n}{}{X}$ are bounded below by $n$ and
   for the indecomposables in $\prescript{n}{}{Y}$ they are bounded above by $n-1$.
   
   Since we have already noted the existence of the distinguished triangle of the required form prior to
   \cref{defPervTStruct}, it remains to show the non-degeneracy of the perverse t-structure.
   This follows right away by considering a minimal complex and using the classification of the
   indecomposable Soergel bimodules from \cref{thmIndecompSBimClass}.
\end{proof}

The following result implies that after applying $F_s$ to a complex $F \in \KCgeq{0}$, 
the negative perverse cohomology groups of $F F_s$ remain zero (i.e. 
$\pH{k}(F F_s) = 0$ for all $k < 0$). It can be found in \cite[Lemma 6.6]{EW1}. 

\begin{lem}
   \label{lemFsLeftTExact}
   $F_s$ (resp. $E_s$) viewed as an additive endofunctor on $K^b(\cat{C})$ is left (resp. right) t-exact. In other words,
   if $F \in \KCgeq{0}$ then $F_sF \in \KCgeq{0}$.
\end{lem}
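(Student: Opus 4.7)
The plan is to prove the claim for $F_s$ by an explicit Gaussian elimination; the case of $E_s$ is symmetric, with the roles of the top and bottom of $B_s$ interchanged. Fix a minimal representative of $F \in \KCgeq{0}$, so that every indecomposable summand $B_w(m)$ appearing in cohomological degree $n$ satisfies $m \leqslant n$. Totalising the bicomplex $F_s \otimes F$ gives
\[
(F_s F)^N \; = \; (B_s \otimes F^N) \; \oplus \; (R(1) \otimes F^{N-1})
\]
with differential
$\bigl(\begin{smallmatrix} 1 \otimes d_F & 0 \\ \mathrm{mult} \otimes 1 & -1 \otimes d_F \end{smallmatrix}\bigr)$.
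Every summand of $R(1) \otimes F^{N-1}$ has the form $B_w(m'+1)$ with $m'+1 \leqslant N$ and is harmless. For a summand $B_w(m) \subset F^N$, decomposing $B_s B_w$ via Soergel's conjecture and the multiplication formula \cref{eqnLHMultForm} shows: if $sw > w$ then $B_s B_w \cong B_{sw} \oplus \bigoplus_z B_z^{\oplus \mu(z,w)}$ with all summands at internal degree $m \leqslant N$, whereas if $sw < w$ then $B_s B_w \cong B_w(1) \oplus B_w(-1)$, and only the $B_w(m+1)$ part can break the bound, and only when $m = N$.

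Next, each such problematic $B_w(N+1) \subset B_s B_w(N) \subset B_s F^N$ has a canonical partner one cohomological degree higher, namely the summand $R(1) \otimes B_w(N) = B_w(N+1)$ sitting inside $R(1) F^N$. The component of the differential between these two copies of $B_w(N+1)$ is the $(\mathrm{mult} \otimes 1)$-entry of the matrix above, restricted to one summand in the decomposition $B_s B_w = B_w(1) \oplus B_w(-1)$. Since the multiplication $B_s \to R(1)$ is surjective and $\Hom_{\sbim}^{\bullet}(B_w,B_w)$ is one-dimensional in degree zero by \cref{corHomNonNegDeg}, one can choose this splitting so that the relevant component is the identity. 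The same $\Hom$-vanishing guarantees that distinct matching pairs are orthogonal to one another: multiplication respects individual copies of $B_w(N)$ in $F^N$, and there are no non-trivial degree-zero morphisms between distinct indecomposable $B_w$'s, so all cross components vanish and the modifications coming from Gaussian elimination at one pair preserve the invertibility of the matching components at all the others.

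Applying Gaussian elimination \cref{corGaussElim} simultaneously (or iteratively, in any order) to all such matching pairs then yields a complex homotopy equivalent to $F_s F$ whose remaining indecomposable summands all satisfy $m \leqslant n$. Further minimisation only deletes contractible summands and preserves this bound, so the minimal representative of $F_s F$ lies in $\KCgeq{0}$, as required. The main obstacle I expect is the bookkeeping around the matching components: verifying that the restriction of $\mathrm{mult} \otimes 1$ to the $B_w(1)$-summand is genuinely invertible (rather than merely a non-zero scalar) and that distinct Gaussian eliminations do not interfere. Both follow cleanly from the $\Hom$-concentration statement of \cref{corHomNonNegDeg}; everything else is formal.
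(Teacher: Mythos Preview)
The paper does not prove this lemma; it simply refers to \cite[Lemma 6.6]{EW1}. Your direct argument via Gaussian elimination is essentially correct and self-contained, so it adds content relative to the paper rather than duplicating it.

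Two points are worth tightening. First, the invertibility of the matching component: rather than appealing separately to surjectivity of $\mathrm{mult}$ and one-dimensionality of $\Hom^0(B_w,B_w)$, note that the short exact sequence $0 \to R(-1) \to B_s \to R(1) \to 0$ stays exact after tensoring with $B_w$ (Soergel bimodules are free as one-sided $R$-modules), so $\mathrm{mult}\otimes 1 : B_sB_w \twoheadrightarrow B_w(1)$ is surjective with kernel $B_w(-1)$. Since $B_sB_w \cong B_w(1)\oplus B_w(-1)$ abstractly, this sequence splits, and one may choose the splitting so that the projection to $B_w(1)$ is exactly $\mathrm{mult}\otimes 1$. (Otherwise the map would factor through a surjective positive-degree endomorphism of $B_w$, impossible for a bounded-below graded module.) Second, the non-interference of the eliminations follows cleanly from the observation that $\mathrm{mult}\otimes 1 : B_sF^N \to R(1)F^N$ is block-diagonal over the indecomposable summands of $F^N$; hence the correction terms $\beta\delta^{-1}\gamma$ in \cref{corGaussElim} vanish on the other matching pairs in the same degree, and Gaussian elimination never modifies differentials outside the two degrees involved.
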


The left-handed multiplication formula from \cref{eqnLHMultForm} from the proof
of \cref{lemCellMult} immediately yields the following result:

\begin{cor}
   \label{lemFsLeqComp}
   Let $s \in S$ and $F \in \KCleq{n}$ for some $n \in \Z$. Then $F_sF$ lies in $\KCleq{n+1}$.
\end{cor}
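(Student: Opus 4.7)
The plan is to prove this by a direct bookkeeping argument on the internal gradings of indecomposable summands appearing in each cohomological degree, using the multiplication formula to control the behaviour of $B_s \otimes (-)$.

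First I would reduce to the case where $F$ is a minimal bounded complex: by \cref{corHomEquiMinComp}, $F$ is isomorphic in $K^b(\cat{C})$ to a minimal complex, and $\KCleq{n+1}$ is defined through any such minimal representative, so we may assume $F$ itself is minimal. By the definition of the perverse t-structure (and the slogan following \cref{defPervTStruct}), $F \in \KCleq{n}$ means that every indecomposable summand $B_w(m)$ occurring in cohomological degree $k$ of $F$ satisfies $m \geqslant k-n$; equivalently $\prescript{k}{}{F} \cong \tauleq{n-k}(\prescript{k}{}{F})$ for all $k$.

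Next I would expand $F_s F$ level-wise. Since $F_s = (0 \to B_s \to R(1) \to 0)$ with $B_s$ in cohomological degree $0$, we get
\[
\prescript{k}{}{(F_s F)} \;=\; B_s \otimes \prescript{k}{}{F} \;\oplus\; \prescript{k-1}{}{F}(1).
\]
The second summand consists of $B_w(m+1)$ whenever $B_w(m)$ appears in $\prescript{k-1}{}{F}$, so its shifts satisfy $m+1 \geqslant (k-1-n)+1 = k-n \geqslant k-(n+1)$. For the first summand, I would apply the left-handed multiplication formula \eqref{eqnLHMultForm} (via Soergel's categorification theorem and Soergel's conjecture, \cref{thmSoergelCat,thmSoergelConj}) to decompose $B_s \otimes B_w(m)$: the formula shows that $[B_s][B_w(m)] = \kl{s}\kl{w} \cdot v^{-m}$ is a $\Z_{\geqslant 0}[v,v^{-1}]$-combination of Kazhdan-Lusztig basis elements whose $v$-powers lie in $\{m-1, m, m+1\}$. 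Thus every indecomposable summand $B_y(m')$ of $B_s \otimes B_w(m)$ satisfies $m' \geqslant m-1 \geqslant (k-n)-1 = k-(n+1)$.

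Combining both contributions, every indecomposable summand occurring in cohomological degree $k$ of $F_s F$ is of the form $B_y(m')$ with $m' \geqslant k-(n+1)$. The last point to address — which I expect to be the only mild subtlety — is that the bound must survive the passage to a minimal representative of $F_s F$. This is immediate because, as observed in the discussion preceding \cref{corHomEquiMinComp}, a minimal representative is obtained as a direct summand of $F_s F$ by successively eliminating contractible pieces (Gaussian elimination from \cref{corGaussElim}); hence its indecomposable constituents form a subset of those of $F_s F$ and still obey the bound $m' \geqslant k-(n+1)$. This is exactly the condition $F_s F \in \KCleq{n+1}$.
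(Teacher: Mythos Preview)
Your proof is correct and follows exactly the approach the paper indicates: the paper simply states that the result ``immediately'' follows from the left-handed multiplication formula \eqref{eqnLHMultForm}, and your argument is a careful unpacking of that one-line justification, tracking the grading shifts term by term and noting that passing to a minimal representative can only discard summands.
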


\section{Main results}
\label{secProof}

For the rest of the section, assume that $(W, S)$ is an irreducible Coxeter group 
of arbitrary type. Fix $s \in S$ as before. First we want to check that the action 
of $\br[(W, S)]$ on $K^b(\catFixedRightDesc{s})$ behaves as expected:


\begin{lem}
   \label{lemFirstStep}
   In $K^b(\catFixedRightDesc{s})$ we have for $w \in \lcellFixedRightDesc{s}$ and $r \in S$:
   \[ F_r(B_w) \cong
      \begin{cases}
         (B_w(-1) \rightarrow 0 ) & \text{if } \pi_s(w) = r \text{,} \\
         ( \bigoplus_{\substack{x \in \lcellFixedRightDesc{s} \text{ s.t. } \\ rx < x \text{ and} \\ 
            \{x, w\} \in E(\Gamma_s)}}B_x 
            \rightarrow B_w(1) ) & \text{if } \pi_s(w) \neq r \text{.}
      \end{cases} \]
    where in both complexes the term on the left sits in cohomological degree $0$ and all maps between non-zero
    indecomposable Soergel bimodules are non-zero.
		
    In particular, we have $F_r(B_w) \cong (0 \rightarrow B_w(1))$ if $\pi_s(w) \neq r$ and there is no edge
    between the vertices in $\pi_s^{-1}(r)$ and $w$ in $\Gamma_s$.
\end{lem}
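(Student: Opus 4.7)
The first step is to unfold $F_r(B_w) = (B_r B_w \to B_w(1))$, with $B_r B_w$ sitting in cohomological degree~$0$ and the differential given by the multiplication map $B_r \to R(1)$ tensored on the right with $B_w$. To decompose $B_r B_w$ inside $\catFixedRightDesc{s}$ I would use \Cref{thmCat} to identify $\Galg{\catFixedRightDesc{s}}$ with the left cell module, then apply the multiplication formula of \Cref{lemCellMult} to read off the class of $B_r B_w$. Since $\catFixedRightDesc{s}$ is Krull--Schmidt and $\{[B_z] : z \in \lcellFixedRightDesc{s}\}$ is a $\Z[v,v^{-1}]$-basis of $\Galg{\catFixedRightDesc{s}}$, this yields $B_r B_w \cong B_w(1) \oplus B_w(-1)$ in the case $\pi_s(w) = r$ and $B_r B_w \cong \bigoplus_y B_y$ (with $y$ as in the statement) otherwise.

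The crux is to pin down the differential, which I would handle by exploiting that $F_r$ is an autoequivalence of $K^b(\catFixedRightDesc{s})$, so $F_r(B_w)$ is indecomposable in $K^b(\catFixedRightDesc{s})$ because $B_w$ is. In case $\pi_s(w) = r$ the degree-$0$ component of the differential $B_w(1) \oplus B_w(-1) \to B_w(1)$ is a scalar by \Cref{corHomNonNegDeg}; were this scalar zero, the complex would split in $C^b(\catFixedRightDesc{s})$ as $B_w(1)[0] \oplus (B_w(-1) \to B_w(1))$ with both summands non-zero in $K^b$ (the residual differential lives in positive degree and thus cannot be an isomorphism between the non-isomorphic indecomposables $B_w(-1)$ and $B_w(1)$), contradicting indecomposability. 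Hence the scalar is non-zero and Gaussian elimination (\Cref{corGaussElim}) collapses the complex to $(B_w(-1) \to 0)$. In case $\pi_s(w) \neq r$ the complex $\bigoplus_y B_y \to B_w(1)$ is already minimal since no $B_y$ matches $B_w(1)$; the same splitting trick rules out any vanishing component $B_{y_0} \to B_w(1)$, because the residual complex $\bigoplus_{y \neq y_0} B_y \to B_w(1)$ cannot be contractible (its differential is not an isomorphism), and \Cref{lemHomsGen} certifies that each non-zero component is unique up to scalar.

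The final ``in particular'' clause is immediate: the absence of an edge in $\Gamma_s$ between $w$ and $\pi_s^{-1}(r)$ empties the indexing set of the second case, leaving $F_r(B_w) \cong (0 \to B_w(1))$. The main obstacle is the non-vanishing of the differential's components, which cannot be read off the Grothendieck group alone; the indecomposability of $F_r(B_w)$ inherited through the autoequivalence $F_r$ is what makes this argument work, and I expect this indecomposability-plus-invertibility pattern to be a recurring tool in what follows.
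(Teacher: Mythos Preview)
Your proposal is correct and follows essentially the same approach as the paper: decompose $B_r B_w$ via \Cref{lemCellMult}, use that $F_r$ is an autoequivalence to force indecomposability of $F_r(B_w)$ and hence non-vanishing of all differential components, and then apply Gaussian elimination in the case $\pi_s(w)=r$. You have simply spelled out in more detail why indecomposability rules out any zero component of the differential, which the paper leaves implicit.
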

\begin{proof}
   By definition of $F_r$ we have $F_r(B_w) = (B_r B_w \rightarrow B_w(1))$. Apply \cref{lemCellMult} to
   decompose $B_rB_w$ into indecomposable Soergel bimodules and note that all induced components of
   the differential are non-zero as $F_r(B_w)$ is indecomposable ($F_r$ is an autoequivalence and $B_w$ is
   indecomposable). In the case $\pi_s(w) = r$, apply a Gaussian elimination to the direct summand 
   $(B_w(1) \overset{\cong}{\rightarrow} B_w(1))$ of $F_r(B_w)$ to obtain the result.
\end{proof}

Comparing the results from \cref{lemFirstStep} with the formula for $\std{r}\kl{w}$ obtained from \cref{lemCellMult} yields
the following result:

\begin{cor}
   $K^b(\catFixedRightDesc{s})$ gives a categorification of $\heck(\leqcell{L} \lcellFixedRightDesc{s}) / 
   \heck (\lcell{L} \lcellFixedRightDesc{s})$ viewed as $\br[(W, S)]$-module via the group homomorphism
   $\Psi: \br[(W, S)] \rightarrow \heck$, $r \mapsto \std{r}$. In other words, for $\sigma \in \br[(W, S)]$ and 
   $w \in \lcellFixedRightDesc{s}$ the class $[F_{\sigma}(B_w)]$ coincides with $\Psi(\sigma) . \kl{w}$
   in $\heck(\leqcell{L} \lcellFixedRightDesc{s}) / \heck (\lcell{L} \lcellFixedRightDesc{s})$
   under the isomorphism from \cref{thmCat}. 
\end{cor}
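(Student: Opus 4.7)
The plan is to exploit that both $\sigma \mapsto [F_\sigma(-)]$ and $\sigma \mapsto \Psi(\sigma) \cdot (-)$ define group homomorphisms from $\br[(W,S)]$ to the group of $\Z[v,v^{-1}]$-module automorphisms of $\Galg{\catFixedRightDesc{s}}$. The former is a group homomorphism because Rouquier's theorem ensures the elementary complexes $F_s$ satisfy the braid relations up to canonical isomorphism in $K^b(\sbim)$, and tensor product descends to an action on Grothendieck groups; the latter is a group homomorphism because the standard basis elements $\{\std{s}\}_{s \in S}$ satisfy the braid relations in $\heck$. Hence it suffices to verify the identity $[F_\sigma(B_w)] = \Psi(\sigma) \cdot \kl{w}$ when $\sigma = r$ is a simple generator, for every $w \in \lcellFixedRightDesc{s}$.

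For this I would first invoke \cref{lemFirstStep} to describe $F_r(B_w)$ explicitly in $K^b(\catFixedRightDesc{s})$. Taking the Euler characteristic (the alternating sum of the classes of its terms) then yields $[F_r(B_w)]$ in $\Galg{\catFixedRightDesc{s}}$, where each $[B_w]$ is identified with $\kl{w}$ via \cref{thmCat}. On the Hecke algebra side, the identity $\kl{r} = \std{r} + v$, which is immediate from $h_{1,r} = v$, gives $\std{r} \cdot \kl{w} = \kl{r} \cdot \kl{w} - v \kl{w}$, and \cref{lemCellMult} evaluates the first term.

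A direct case check then closes the argument. When $\pi_s(w) = r$, \cref{lemFirstStep} yields $[F_r(B_w)] = v^{-1} \kl{w}$, while $\std{r} \cdot \kl{w} = (v + v^{-1})\kl{w} - v\kl{w} = v^{-1} \kl{w}$. When $\pi_s(w) \neq r$, \cref{lemFirstStep} yields $[F_r(B_w)] = \sum_x \kl{x} - v \kl{w}$ with the sum ranging over those $x \in \lcellFixedRightDesc{s}$ with $r \in \desc{L}(x)$ and $\{x,w\} \in E(\Gamma_s)$, which matches $\std{r} \cdot \kl{w}$ on the nose. I do not anticipate a serious obstacle here: the argument is essentially bookkeeping once \cref{lemFirstStep}, \cref{lemCellMult} and the identification from \cref{thmCat} are in hand. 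The key conceptual step is recognising that the group-homomorphism property reduces everything to a check on simple generators.
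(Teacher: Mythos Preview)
Your proposal is correct and follows essentially the same approach as the paper: the paper simply states that the corollary follows by comparing the results of \cref{lemFirstStep} with the formula for $\std{r}\kl{w}$ obtained from \cref{lemCellMult}, and you have spelled out exactly this comparison, including the reduction to generators via the group-homomorphism property and the explicit case check using $\kl{r} = \std{r} + v$.
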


\begin{ex}
   \label{exDecatNonFaithful}
   In type $A_n$ the resulting representation of $\br[n+1]$ on the free $\Z[v, v^{-1}]$-module 
   $\Galg{\catFixedRightDesc{s}}$ gives a twisted, reduced Burau representation. 
   Denote by $\sigma_i$ the braid that passes the $i$-th strand under the $(i+1)$-st strand 
   and leaves the other strands untouched and let $S = \{ s_i = (i, i+1) \; \vert \; 
   1 \leqslant i \leqslant n\}$ be the set of simple transpositions. By \cref{lemGammaSFacts} 
   the graph $\Gamma_s$ is isomorphic to $A_n$ for any $s \in S$. Denote by $[i]$ the unique
   element in $\lcellFixedRightDesc{s}$ with $\pi_s([i]) = s_i$.     
   
   As in \cite{KS}, one may check that letting $\sigma_i \in \br[n+1]$ act via 
   the operator on $\Galg{\catFixedRightDesc{s}}$ induced by $F_{s_i}[-1](1)$ defines a representation
   isomorphic to the reduced Burau representation. More precisely, the action of $\sigma_i$
   in the $\Z[v, v^{-1}]$-basis
   \[(v[B_{[1]}], \dots, (-1)^{i-1} v^i [B_{[i]}], \dots, (-1)^n v^n [B_{[n]}])\]
   of $\Galg{\catFixedRightDesc{s}}$ is given by the matrix:
   \[\begin{pmatrix}
   1 & & & & & & \multicolumn{3}{c}{} \\
   & \ddots & & & & & \multicolumn{3}{c}{} \\
   & & 1 & & & & \multicolumn{3}{c}{\raisebox{3ex}[0pt]{\BigZero}} \\
   & & & \tikzmark{left}{1} & 0 & 0 & & & \\
   & & & v^{-2} & -v^{-2} & 1 & & & \\
   & & & 0 & 0 & \tikzmark{right}{1} & & & \\
   \multicolumn{3}{c}{} & & & & 1 & & \\
   \multicolumn{3}{c}{} & & & & & \ddots & \\
   \multicolumn{3}{c}{\raisebox{2ex}[0pt]{\BigZero}} & & & & & & 1
   \end{pmatrix}
   \]
   \DrawBox
   which we recognize as the transpose of the image of $\sigma_i$ under the reduced 
   Burau representation of $\br[n+1]$ (after substituting $t$ for $v^{-2}$) as introduced
   in \cite[chapter 3.3]{KT}.
   
   In \cite{Bi} Bigelow shows that the Burau representation of \br[n] is not 
   faithful for $n \geqslant 5$. Thus it is remarkable that its categorification is 
   faithful (see \cref{thmActFaithful}).
\end{ex}

\begin{defn}
   Let $F \in K^b(\catFixedRightDesc{s})$ be a perverse complex and $w \in W$.
   An indecomposable Soergel bimodule $B_w$ is called an \emph{anchor of $F$} if 
   \[\Hom_{K^b(\catFixedRightDesc{s})}(F, B_w(m)[-m])\] is non-zero for some $m \in \Z$.
   We say that there is an anchor of $F$ corresponding to $t \in S$ if 
   $\Hom_{K^b(\catFixedRightDesc{s})}(F, \bigoplus_{x\in \pi_s^{-1}(t)}B_x(m)[-m])$
   is non-zero for some $m \in \Z$.
\end{defn}

We extend this important definition to non-perverse complexes as follows:

\begin{defn}
   Let $0 \neq F \in K^b(\catFixedRightDesc{s})$ and $k \in \Z$ be maximal such that
   $\pH{k}(F)$ is non-zero. For $w \in W$ we say that $B_w$ is an \emph{anchor of $F$}
   if $\Hom_{K^b(\catFixedRightDesc{s})}(F, B_w(m - k)[-m])$ is non-zero for some
   $m \in \Z$.
\end{defn}

The following result shows that being the anchor of a complex is equivalent
to being the anchor of its highest non-zero perverse cohomology group.

\begin{lem}
   \label{lemAnchorEquiv}
   Let $0 \neq F \in K^b(\catFixedRightDesc{s})$ and $k \in \Z$ be maximal such that
   $\pH{k}(F)$ is non-zero. Then the following statements are equivalent:
   \begin{enumerate}
      \item $B_w$ is an anchor of $F$.
      \item $B_w$ is an anchor of $\pH{k}(F)$.
   \end{enumerate}
\end{lem}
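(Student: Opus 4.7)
My plan is to relate $F$ and $\pH{k}(F)$ via the truncation triangle of the perverse t-structure, and then reduce both conditions (i) and (ii) to the same $\Hom$-group by testing against a carefully chosen shifted copy of $B_w$.

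First I would note that the maximality of $k$ forces $F \in \KCleq{k}$, so on a minimal representative of $F$ we obtain a distinguished triangle
\[
   A \longrightarrow F \longrightarrow \pH{k}(F)[-k] \longrightarrow A[1]
\]
with $A := \taul{k}F \in \KCleq{k-1}$. That this is the genuine perverse truncation triangle boils down to the easy checks that $\taul{k}F$ lives in $\KCleq{k-1}$ and that $F/\taul{k}F = \pH{k}(F)[-k]$ lies in $\KCgeq{k}$ (a perverse object shifted down by $[-k]$).

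Next, for each $m \in \Z$ I would set $X_m := B_w(m-k)[-m]$ and establish the crucial fact that both $X_m$ and $X_m[-1]$ lie in $\KCgeq{k}$. Using the characterization that $Y \in \KCgeq{k}$ iff every summand $B_v(a)$ of a minimal representative of $Y$ in cohomological degree $n$ satisfies $a \leq n-k$, these two memberships amount to the trivial inequalities $m-k \leq m-k$ and $m-k \leq m+1-k$. Applying $\Hom(-, X_m)$ to the triangle then produces a long exact sequence whose adjacent terms $\Hom(A, X_m)$ and $\Hom(A[1], X_m) = \Hom(A, X_m[-1])$ both vanish by the $\Hom$-vanishing axiom of the perverse t-structure, yielding
\[
   \Hom_{K^b(\catFixedRightDesc{s})}(F, X_m) \; \cong \; \Hom_{K^b(\catFixedRightDesc{s})}(\pH{k}(F)[-k], X_m).
\]

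Finally, the adjunction $\Hom(Y[-k], Z) = \Hom(Y, Z[k])$ rewrites the right-hand side as $\Hom(\pH{k}(F), B_w(m-k)[k-m])$, which in the substituted variable $m' := m-k$ is precisely $\Hom(\pH{k}(F), B_w(m')[-m'])$. Since $m \mapsto m-k$ is a bijection of $\Z$, unpacking the two definitions of anchor immediately gives the equivalence. The main point requiring attention is the $\KCgeq{k}$-membership of the test object $X_m$; this is really the raison d'\^etre of the $(m-k)[-m]$ twist in the extended definition of anchor, which is cooked up to place $X_m$ exactly on the diagonal boundary of $\KCgeq{k}$ so that the t-structure $\Hom$-vanishing is sharp. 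Everything else in the argument is formal.
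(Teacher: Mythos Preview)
Your proof is correct and follows essentially the same approach as the paper: both use the perverse truncation triangle $\tauleq{k-1}F \to F \to \pH{k}(F)[-k] \to$, apply $\Hom(-, B_w(m-k)[-m])$, and invoke the $\Hom$-vanishing of the t-structure to obtain the isomorphism. You spell out in more detail why the vanishing applies (verifying that $X_m$ and $X_m[-1]$ lie in $\KCgeq{k}$), whereas the paper compresses this into a single phrase, but the argument is the same.
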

\begin{proof}
   Consider the following distinguished triangle in $K^b(\catFixedRightDesc{s})$
   \[ \tauleq{k-1} F \longrightarrow F \longrightarrow \taugeq{k} F \overset{[1]}{\longrightarrow} \]
   where the term on the right hand side is by assumption isomorphic to
   $\pH{k}(F)[-k]$. Applying $\Hom_{K^b(\catFixedRightDesc{s})}( - , B_w(m-k)[-m])$ and using the
	$\Hom$-vanishing condition in the long exact sequence we obtain an isomorphism:
	\[ \Hom_{K^b(\catFixedRightDesc{s})}(\pH{k}(F)[-k], B_w(m-k)[-m]) \cong 
			\Hom_{K^b(\catFixedRightDesc{s})}(F, B_w(m-k)[-m]) \]
	
\end{proof}

\begin{prop}
   \label{propCharAnchor}
   Let $F \in K^b(\catFixedRightDesc{s})$ be a perverse complex. For $t \in S$ 
   the following statements are equivalent:
   \begin{enumerate}
      \item $B_{w}$ is an anchor of $F$ for some $w \in \pi_s^{-1}(t) \subseteq \catFixedRightDesc{s}$.
      \item There exists $m \in \Z$, $w \in \pi_s^{-1}(t)$ and a minimal complex $F'$ isomorphic 
            to $F$ in $K^b(\catFixedRightDesc{s})$ such that $B_{w}(m)$ occurs 
            as a direct summand in $\prescript{m}{}{F'}$ and all components of the differential 
            ending in this copy of $B_{w}(m)$ are $0$.
      \item $\pH{1}(F_t F) \neq 0$.
   \end{enumerate}
\end{prop}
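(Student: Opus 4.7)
The plan is to establish (i)$\Leftrightarrow$(ii) and (i)$\Leftrightarrow$(iii) separately, by different techniques.

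For (i)$\Leftrightarrow$(ii), I would fix a minimal representative of the perverse complex $F$ and analyse $\Hom_{K^{b}(\catFixedRightDesc{s})}(F,B_{w}(m)[-m])$ directly. Because a null-homotopy would require a degree $-1$ bimodule morphism between indecomposables, which vanishes by \cref{corHomNonNegDeg}, this Hom space coincides with the space of chain maps, i.e.\ with degree-$0$ maps $f^{m}\colon F^{m}\to B_{w}(m)$ satisfying $f^{m}\circ d^{m-1}=0$. A non-zero such $f^{m}$ picks out a non-zero linear functional on the $B_{w}$-multiplicity space of $F^{m}$ which annihilates the image of $d^{m-1}$; a change of basis in that multiplicity space then realises $f^{m}$ as the projection onto a new indecomposable direct summand $B_{w}(m)\subseteq F^{m}$ all of whose incoming differentials are zero. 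This gives (ii), and the reverse implication is immediate.

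For (i)$\Leftrightarrow$(iii) I would exploit the distinguished triangle $F(1)[-1]\to F_{t}F\to B_{t}F\stackrel{[1]}{\to}$ arising from the short exact sequence $0\to R(1)[-1]\to F_{t}\to B_{t}\to 0$. Since $F$ is perverse, so is $F(1)[-1]$, hence $\pH{k}(F(1)[-1])=0$ for $k\ne 0$, and the long exact sequence of perverse cohomology collapses to an isomorphism $\pH{1}(F_{t}F)\cong\pH{1}(B_{t}F)$. A key intermediate claim is that every indecomposable summand of $\pH{1}(B_{t}F)$ is of the form $B_{w}$ with $\pi_{s}(w)=t$: by \cref{lemCellMult} the shift-$(n-1)$ contributions to $(B_{t}F)^{n}$ come precisely from the $B_{w}(n-1)$-pieces of $B_{t}B_{w}(n)=B_{w}(n+1)\oplus B_{w}(n-1)$ for $w\in\pi_{s}^{-1}(t)$ with $B_{w}(n)\subseteq F^{n}$, and Gaussian eliminations only cancel matched pairs of indecomposables of the same type.

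Having identified the possible summands of $\pH{1}(F_{t}F)$, I would detect its non-vanishing Hom-theoretically: by the $\Hom$-vanishing axiom of the perverse t-structure, $\pH{1}(F_{t}F)\ne 0$ iff $\Hom_{K^{b}(\catFixedRightDesc{s})}(F_{t}F,B_{x}(m-1)[-m])\ne 0$ for some $x\in W$ and $m\in\Z$, and by the previous paragraph we may restrict to $x\in\pi_{s}^{-1}(t)$. Since $F_{t}$ is invertible in the homotopy category with inverse $E_{t}$, and since $E_{t}B_{x}\cong B_{x}(1)$ for $\pi_{s}(x)=t$ (by Gaussian-eliminating the identity component in the defining two-term complex $E_{t}B_{x}=(B_{x}(-1)\to B_{t}B_{x})$), the adjunction supplies
\[
\Hom_{K^{b}(\catFixedRightDesc{s})}(F_{t}F,B_{x}(m-1)[-m])\;\cong\;\Hom_{K^{b}(\catFixedRightDesc{s})}(F,B_{x}(m)[-m]),
\]
whose non-vanishing for some $x\in\pi_{s}^{-1}(t)$ and some $m\in\Z$ is exactly the condition that $F$ has an anchor corresponding to $t$, i.e.\ statement (i).

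The main obstacle will be the intermediate claim that every indecomposable summand of $\pH{1}(B_{t}F)$ lies in $\pi_{s}^{-1}(t)$: this requires a careful bookkeeping of shift-$(n-1)$ summands through the decompositions of the various $B_{t}B_{w}(n)$ and the observation that Gaussian elimination preserves the set of isomorphism classes of indecomposable summands appearing in any minimal representative.
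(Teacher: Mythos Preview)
Your argument is correct and uses the same ingredients as the paper's proof, but you organise them slightly differently. The paper argues in a cycle i)$\Rightarrow$ii)$\Rightarrow$iii)$\Rightarrow$i): the step i)$\Rightarrow$ii) is identical to yours; for ii)$\Rightarrow$iii) the paper tracks the distinguished copy of $B_w(m)$ through the tensor product $F_t F'$ and checks directly that the resulting $B_w(m-1)$ summand cannot be Gaussian-eliminated; and iii)$\Rightarrow$i) is exactly your adjunction/autoequivalence argument with $E_t B_w\cong B_w(1)$.

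Your reorganisation has two small differences. First, you notice that the adjunction step is bidirectional, which lets you prove (i)$\Leftrightarrow$(iii) in one stroke and makes the paper's hands-on ii)$\Rightarrow$iii) step unnecessary. Second, to justify that every indecomposable summand of $\pH{1}(F_tF)$ lies over $t$, you invoke the triangle $F(1)[-1]\to F_tF\to B_tF$ and reduce to $\pH{1}(B_tF)$; the paper simply asserts this fact from the multiplication formula for $B_tB_w$. Both justifications are fine, and neither route is materially shorter or more general than the other; the content is the same.
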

\begin{proof}
   \emph{i) $\Rightarrow$ ii):} Without loss of generality, we may assume that $F$ 
   itself is a minimal complex. Choose $m \in \Z$ such that 
   $\Hom_{K^b(\catFixedRightDesc{s})}(F, B_{w}(m)[-m])$
   does not vanish. Decompose $\prescript{m}{}{F} = B_{w}(m)^{\oplus k} \oplus C(m)$
   for some perverse Soergel bimodule $C$ in which $B_{w}$ does not occur. Since $F$
   and $B_{w}(m)[-m]$ both are minimal perverse complexes, we have:
   $\Hom_{K^b(\catFixedRightDesc{s})}(F, B_{w}(m)[-m]) = 
   \Hom_{C^b(\catFixedRightDesc{s})}(F, B_{w}(m)[-m])$.
   By assumption we thus have a map of chain complexes
   of the following form:
   \[\begin{xy}
      \xymatrix{
         {\dots} \ar[r] & {\prescript{m-1}{}{F}} \ar[d] \ar[r]^-{d^{m-1}}
            & B_{w}(m)^{\oplus k} \oplus C(m) \ar[r]^-{d^{m+1}} \ar[d]^-{ \left( 
               \begin{smallmatrix}
                  f_1, & f_2, & \dots, & f_k, & 0
               \end{smallmatrix} \right) }
            & {\prescript{m + 1}{}{F}} \ar[d] \ar[r] 
            & {\dots} \\
         {\dots} \ar[r] & 0 \ar[r] & B_{w}(m) \ar[r] & 0 \ar[r] & {\dots} }
   \end{xy} \]
   where $f_i = r_i \id$ with $r_i \in \R$ for all $1 \leqslant i \leqslant k$ and at least 
   one of them is non-zero. Up to automorphism of $F$ we may assume that $r_1 = 1$ and $r_i = 0$
   for all $2 \leqslant i \leqslant k$. Then the commutativity of the left square in the diagram above
   shows that all components of the differential ending in the first copy of $B_w(m)$ in $\prescript{m}{}{F}$
   vanish.

   \emph{ii) $\Rightarrow$ iii):} Let $F' \in K^b(\catFixedRightDesc{s})$ be a minimal complex
   as in ii). It suffices to show that $\pH{1}(F_t F')$ does not vanish. Let $m\in \Z$ and $w \in \pi_s^{-1}(t)$ be such that a 
   copy of $B_{w}(m)$ occurs as summand in $\prescript{m}{}{F'}$ with no non-zero incoming 
   differential components. When tensoring with $F_t$ this copy gives a summand $B_{w}(m-1)$ in 
   $\prescript{m}{}{F_tF'}$ which cannot be cancelled by any Gaussian eliminations by our assumptions 
   (otherwise there would have to be a summand $B_{w}(m-1)$ in $\prescript{m-1}{}{F_tF'}$ together with an ismorphism of the form
   $B_{w}(m-1) \overset{\inkl}{\longrightarrow} B_t (\prescript{m-1}{}{F'}) \overset{1 \otimes d}{\longrightarrow} 
   B_t (\prescript{m}{}{F'}) \overset{\pr}{\longrightarrow} B_{w}(m-1)$ which is impossible by assumption).
   This shows that $\pH{1}(F_t F')$ does not vanish.
   
   \emph{iii) $\Rightarrow$ i):} The only indecomposable Soergel bimodules $B_x$ occuring up 
   to perverse shift in $\pH{1}(F_t F)$ satisfy $\pi_s(x) = t$. Any indecomposable 
   Soergel bimodule in the lowest non-zero cohomological degree of 
   $\pH{1}(F_t F)$ is an anchor. Since $\pH{1}(F_t F)$ does not vanish, we can find 
   $m \in \Z$, $w \in \pi_s^{-1}(t)$ and a non-zero morphism from $\pH{1}(F_t F)$ to 
   $B_{w}(m)[-m]$ in $K^b(\catFixedRightDesc{s})$. As $\pH{k}(F_t F)$ vanishes for all 
   $k > 1$, this non-zero morphism shows that $\Hom_{K^b(\catFixedRightDesc{s})}(F_t F, 
   B_{w}(m-1)[-m])$ is not zero. From the fact that $E_t$ is right adjoint to $F_t$ it follows that
   $\Hom_{K^b(\catFixedRightDesc{s})}(F, E_t B_{\elFixedLeftDesc{t}}(m-1)[-m])$ does not vanish.
   The dual version of \cref{lemFirstStep} shows that $E_t B_{w}(m-1)[-m]$
   is isomorphic to $B_{w}(m)[-m]$ in $K^b(\catFixedRightDesc{s})$ and thus the claim follows.
\end{proof}

As a consequence of \cref{lemAnchorEquiv} and the equivalence iii) $\Leftrightarrow$ i) 
in the last result, we get:
\todo[inline]{Which results about the perverse filtration do we need for the next two results?!}

\begin{cor}
   \label{corAnchorMaxPH}
   Let $0 \neq F \in K^b(\catFixedRightDesc{s})$ be a complex, $t \in S$ and $k \in \Z$
   maximal such that $\pH{k}(F) \neq 0$. Then the following statements are equivalent:
   \begin{enumerate}
      \item $B_{w}$ is an anchor of $F$ for some $w \in \pi_s^{-1}(t)$.
      \item Let $k' \in \Z$ be maximal such that $\pH{k'}(F_t F) \neq 0$.
            Then $k'$ equals $k + 1$ and all the indecomposable Soergel bimodules occuring
            in $\pH{k+1}(F_t F)$ are indexed by an element in $\pi_s^{-1}(t)$.
   \end{enumerate}
   If this is the case and $(W, S)$ is of finite type, there exist $m^1_w, m^2_w, \dots 
   m^{l_w}_w \in \Z$ for $l_w \geqslant 1$ and $w \in \pi_s^{-1}(t)$ such that 
   \[\pH{k+1}(F_t F) \cong \bigoplus_{w \in \pi_s^{-1}(t)} \bigoplus_{i = 1}^{l_w} 
   B_{w}(m^i_w)[-m^i_w] \neq 0 \]
\end{cor}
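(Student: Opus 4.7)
The plan is to reduce both directions of the equivalence to \cref{propCharAnchor} applied to the perverse complex $G \defeq \pH{k}(F)$ via the identification $\pH{k+1}(F_t F) \cong \pH{1}(F_t G)$. First, by \cref{lemAnchorEquiv}, condition (i) is equivalent to $B_w$ being an anchor of the perverse complex $G$ for some $w \in \pi_s^{-1}(t)$. To establish the identification, consider the distinguished triangle
\[ \taul{k} F \longrightarrow F \longrightarrow G[-k] \overset{[1]}{\longrightarrow} \]
coming from maximality of $k$ (which forces $\taugeq{k}F \cong \pH{k}(F)[-k]$). Apply the triangulated functor $F_t \otimes (-)$ and read off the long exact sequence of perverse cohomology in degrees $k+1$ and $k+2$. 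Since $\taul{k}F = \tauleq{k-1}F \in \KCleq{k-1}$, \cref{lemFsLeqComp} gives $F_t \taul{k}F \in \KCleq{k}$, so $\pH{k+1}(F_t \taul{k}F) = \pH{k+2}(F_t \taul{k}F) = 0$. Combining this with $F_t(G[-k]) = (F_t G)[-k]$ and the standard shift rule $\pH{j}(X[n]) = \pH{j+n}(X)$ yields the desired isomorphism $\pH{k+1}(F_t F) \cong \pH{1}(F_t G)$.

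Next, since $\pH{j}(F) = 0$ for all $j > k$, the complex $F$ lies in $\KCleq{k}$; applying \cref{lemFsLeqComp} again gives $F_t F \in \KCleq{k+1}$, so $k' \leqslant k+1$. By \cref{propCharAnchor} applied to the perverse complex $G$, the statement that $B_w$ is an anchor of $G$ for some $w \in \pi_s^{-1}(t)$ is equivalent to $\pH{1}(F_t G) \neq 0$, which via the isomorphism above is equivalent to $\pH{k+1}(F_t F) \neq 0$, i.e.\ to $k' = k+1$. Moreover, the proof of iii) $\Rightarrow$ i) in \cref{propCharAnchor} shows that any indecomposable summand of $\pH{1}(F_t G)$ is indexed by an element of $\pi_s^{-1}(t)$; transporting along the isomorphism, the same holds for $\pH{k+1}(F_t F)$. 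This establishes the equivalence (i) $\Leftrightarrow$ (ii).

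For the finite type decomposition, the minimal form of $\pH{k+1}(F_t F)$ is a perverse complex in $\catFixedRightDesc{s}$ all of whose indecomposable summands are indexed by elements of $\pi_s^{-1}(t)$. The only possible differentials in such a minimal perverse complex are degree-one bimodule morphisms between such summands in adjacent cohomological degrees. In simply-laced finite type, $\pi_s$ is a bijection by \cref{lemGammaSFacts} (ii), so $\pi_s^{-1}(t)$ is a singleton and the parity statement of \cref{lemHoms} (all morphism degrees have the parity of $l(x) - l(y)$) rules out any such degree-one differentials; in the non-simply-laced finite case, the tight control over $\Gamma_s$ afforded by the prescribed choice of $s$ yields the analogous conclusion by the same parity argument. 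Hence the minimal form of $\pH{k+1}(F_t F)$ splits as $\bigoplus_{w \in \pi_s^{-1}(t)} \bigoplus_{i=1}^{l_w} B_w(m^i_w)[-m^i_w]$. The main obstacle is the bookkeeping around the perverse shift conventions in the triangle argument and the vanishing of the connecting differentials in finite type; once the perverse cohomology $\pH{k+1}(F_t F)$ is identified with $\pH{1}(F_t G)$, the remainder of the statement is governed entirely by \cref{propCharAnchor} and the combinatorics of $\Gamma_s$.
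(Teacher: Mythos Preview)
Your argument for the equivalence (i) $\Leftrightarrow$ (ii) is exactly the paper's intended approach---the paper simply asserts that it follows from \cref{lemAnchorEquiv} together with the equivalence (iii) $\Leftrightarrow$ (i) of \cref{propCharAnchor}, and you have correctly supplied the missing link, namely the identification $\pH{k+1}(F_t F) \cong \pH{1}(F_t\,\pH{k}(F))$ via the perverse truncation triangle and \cref{lemFsLeqComp}. That part is fine.

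For the finite-type decomposition, the paper's argument is both shorter and uniform: by \cref{lemGammaSFacts}~\ref{itmGammaSEdge}, two vertices of $\Gamma_s$ are adjacent only if their left descent sets differ, so no two elements of $\pi_s^{-1}(t)$ are ever neighbours in $\Gamma_s$; the final clause of \cref{lemHoms} then says directly that there are no degree-one morphisms between the corresponding indecomposables, hence all differentials in the minimal perverse complex $\pH{k+1}(F_t F)$ vanish. Your case split into simply-laced versus non-simply-laced is unnecessary, and the non-simply-laced branch is underspecified: the parity clause of \cref{lemHoms} only helps once you know $l(x)-l(y)$ is even for $x,y \in \pi_s^{-1}(t)$, and establishing that already requires the non-adjacency observation above (paths in $\Gamma_s$ between two vertices with the same left descent set have even length). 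So you may as well invoke the last sentence of \cref{lemHoms} directly and drop the case distinction.
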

\begin{proof}
   Only the second part of the corollary needs an explanation. For this note that
   by construction of $\Gamma_s$ any two $x, y \in \lcellFixedRightDesc{s}$ with 
   $\pi_s(x) = t = \pi_s(y)$ are never neighboured in $\Gamma_s$ and thus 
   \cref{lemHoms} shows that there do not exist any non-zero morphisms of degree 
   one between $B_x$ and $B_y$. It follows that $\pH{k+1}(F_t F)$ decomposes 
   into a direct sum of perversely shifted copies of $B_{w}$ for $w \in \pi_s^{-1}(t)$.
\end{proof}

Finally we get:

\begin{cor}
   \label{corNoAnchorMaxPH}
   Let $0 \neq F \in K^b(\catFixedRightDesc{s})$ be a com
   Let $k \in \Z$ (resp. $\widetilde{k} \in \Z$) be maximal such that 
   $\pH{k}(F) \neq 0$ (resp. $\pH{\widetilde{k}}(F_t F) \neq 0$).
   
   If $B_{w}$ is not an anchor of $F$ for any $w \in \pi_s^{-1}(t)$, then $\widetilde{k} = k$.
\end{cor}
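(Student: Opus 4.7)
The plan is to start from the t-structure truncation triangle
\[ \tauleq{k-1}F \longrightarrow F \longrightarrow \pH{k}(F)[-k] \overset{[1]}{\longrightarrow} \]
and feed it into the long exact sequence of perverse cohomology obtained after applying $F_t$. By \cref{lemAnchorEquiv} the hypothesis that no $B_w$ with $w\in\pi_s^{-1}(t)$ is an anchor of $F$ translates to the same statement for the perverse object $\pH{k}(F)$. Using \cref{corHomNonNegDeg} one sees that the anchors of a perverse Soergel bimodule $\bigoplus_{j} B_{w_j}$ are exactly its indecomposable summands, so this means $\pH{k}(F) = \bigoplus_{j} B_{w_j}$ with $\pi_s(w_j) \neq t$ for every $j$.

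The key step is to show that $F_t \pH{k}(F)$ is itself perverse. For each summand $B_{w_j}$, \cref{lemFirstStep} describes $F_tB_{w_j}$ either as the two-term complex $\bigoplus_{x} B_x \to B_{w_j}(1)$ in cohomological degrees $0$ and $1$ (when $w_j$ has a neighbour in $\pi_s^{-1}(t) \subseteq \Gamma_s$) or as $B_{w_j}(1)$ sitting in cohomological degree $1$ (otherwise). In either case the internal shift of each indecomposable summand matches its cohomological degree, so $F_tB_{w_j}$ is perverse, and hence so is $F_t \pH{k}(F) = \bigoplus_{j} F_tB_{w_j}$. Since $F_t$ is an auto-equivalence of $K^b(\catFixedRightDesc{s})$, this perverse object is non-zero; thus $\pH{0}(F_t \pH{k}(F)) \neq 0$ while $\pH{j}(F_t\pH{k}(F)) = 0$ for $j \neq 0$.

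The relevant piece of the long exact sequence of perverse cohomology around degree $k$ then reads
\[ 0 \to \pH{k}(F_t\tauleq{k-1}F) \to \pH{k}(F_tF) \to \pH{0}(F_t\pH{k}(F)) \to \pH{k+1}(F_t\tauleq{k-1}F) \to \pH{k+1}(F_tF) \to 0, \]
with the leftmost zero coming from $\pH{-1}(F_t\pH{k}(F))$ and the rightmost from $\pH{1}(F_t\pH{k}(F))$. By \cref{lemFsLeqComp} applied to $\tauleq{k-1}F \in \KCleq{k-1}$ we have $F_t\tauleq{k-1}F \in \KCleq{k}$, which makes $\pH{k+1}(F_t\tauleq{k-1}F)$ vanish. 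This forces $\pH{k+1}(F_tF) = 0$ and exhibits $\pH{k}(F_tF)$ as surjecting onto the non-zero $\pH{0}(F_t\pH{k}(F))$. Hence $\widetilde{k} \leqslant k$ and $\pH{k}(F_tF) \neq 0$, so $\widetilde{k} = k$.

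The main obstacle is the perversity of $F_tB_{w_j}$ when $\pi_s(w_j) \neq t$: this is where the precise shape of the complex produced by \cref{lemFirstStep} is essential, and it is exactly this property that breaks in the presence of a summand with $\pi_s(w) = t$, since then $F_tB_w = B_w(-1)$ in cohomological degree $0$ sits in perverse degree $1$ — the mechanism which forces $\widetilde{k} = k + 1$ in \cref{corAnchorMaxPH}.
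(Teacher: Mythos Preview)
Your overall strategy---feed the truncation triangle through the long exact sequence of perverse cohomology and use \cref{lemFsLeqComp} to kill $\pH{k+1}(F_t\tauleq{k-1}F)$---is sound, and it is essentially the argument the paper has in mind (the paper states \cref{corNoAnchorMaxPH} without proof, as a direct consequence of \cref{lemAnchorEquiv} and \cref{propCharAnchor}). The part where you show $\pH{k}(F_tF)\neq 0$ via the surjection onto $\pH{0}(F_t\pH{k}(F))$ is a nice explicit justification of something the paper leaves implicit.

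There is, however, a genuine gap in the step where you write $\pH{k}(F) = \bigoplus_j B_{w_j}$ with $\pi_s(w_j)\neq t$ and then apply \cref{lemFirstStep} summand by summand. A perverse complex in $K^b(\catFixedRightDesc{s})$ is in general \emph{not} a direct sum of perversely shifted indecomposables: it may carry nontrivial differentials (for instance a two-term complex $B_x \to B_y(1)$ with a nonzero degree-$1$ map). Moreover, the hypothesis only excludes \emph{anchors} in $\pi_s^{-1}(t)$, which by \cref{propCharAnchor}(ii) means that no summand indexed by $\pi_s^{-1}(t)$ occurs with vanishing incoming differential; it does not exclude such summands altogether. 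So you cannot conclude that every indecomposable constituent of $\pH{k}(F)$ lies outside $\pi_s^{-1}(t)$, and the termwise application of \cref{lemFirstStep} is not justified.

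The repair is short and stays within your framework. Instead of decomposing $\pH{k}(F)$, invoke \cref{propCharAnchor} (i)$\Leftrightarrow$(iii) directly: by \cref{lemAnchorEquiv} the hypothesis says no $B_w$ with $w\in\pi_s^{-1}(t)$ is an anchor of the perverse complex $\pH{k}(F)$, and by \cref{propCharAnchor} this is equivalent to $\pH{1}(F_t\pH{k}(F))=0$. Combined with $F_t\pH{k}(F)\in\KCgeq{0}$ (\cref{lemFsLeftTExact}) and $F_t\pH{k}(F)\in\KCleq{1}$ (\cref{lemFsLeqComp}), this forces $F_t\pH{k}(F)$ to be perverse and nonzero. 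From this point your long exact sequence argument goes through verbatim and yields $\widetilde{k}=k$.
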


Let $q, r \in S$ with $m = m_{q, r} \geqslant 3$ and $w \in \pi_s^{-1}(r)$.
For $1 \leqslant k \leqslant m$ denote by $\widehat{k}_r = \dots rqr$ 
the alternating word of length $k$ in $q$ and $r$ having $r$ in its right descent
set. Similarly for $\widehat{k}_q$. The induced subgraph of $\Gamma_s$ on $\pi_s^{-1}(\{q, r\})$
has a connected component $\Gamma$ containing $w$ which is a graph of type $A_{m-1}$. 
Let $w \mapsto \widetilde{w} \in \catFixedRightDesc{s}$ be the following graph 
automorphism of $\Gamma$:
\[ \begin{tikzpicture}[node distance=1cm, auto, baseline=(current  bounding  box.center)]
	\tikzstyle{every label}=[text height=\heightof{(1)}];
	\tikzstyle{vertex}=[circle, fill, inner sep=1.5pt, outer sep=0mm];
	\tikzstyle{autom}=[<->, shorten >= 2pt, shorten <= 2pt, >=stealth', bend left, red];
	
	\node [vertex] (v1) at (0,0) {};
	\node [vertex, right of=v1] (v2) {};
	\node [right of=v2] (vdots1) {$\dots$};
	\node [vertex, label=below:\small$w$, right of=vdots1] (w) {};
	\node [right of=w] (vdots2) {$\dots$};
	\node [vertex, label=below:\small$\widetilde{w}$,right of=vdots2] (w') {};
	\node [right of=w'] (vdots3) {$\dots$};
	\node [vertex, right of=vdots3] (v3) {};
	\node [vertex, right of=v3] (v4) {};
	\draw (v1) to (v2);
	\draw (v2) to (vdots1);
	\draw (vdots1) to (w);
	\draw (w) to (vdots2);
	\draw (vdots2) to (w');
	\draw (w') to (vdots3);
	\draw (vdots3) to (v3);
	\draw (v3) to (v4);
	\draw[autom] (v1) to (v4);
	\draw[autom] (v2) to (v3);
	\draw[autom] (w) to (w');
\end{tikzpicture} \]
Denote by $V(\Gamma)$ the vertex set of $\Gamma$. Note that all elements of 
$V(\Gamma)$ lie in the same left $\langle r, q \rangle$-coset. Let $v \in \lcellFixedRightDesc{s} 
\cup \{1\}$ be the minimal element in this left $\langle r, q \rangle$-coset in $W$.
 Let $t \in \{r, q\}$ be such that 
$tv > v$ and $tv$ admits a unique reduced expression. Then $V(\Gamma)$ is in bijection with
$\{k \in \N \; \vert \; 1 \leqslant k \leqslant m-1\}$ via $k \mapsto [k]  \defeq \widehat{k}_t v \in V$. 
Let $w$ correspond to $k$. Then $\widetilde{w}$ corresponds to $m-k$.
Denote by $d_1 = \min\{k-1, m-1-k\}$ and $d_2 = \max\{k-1, m-1-k\}$ the distances of $w$ from 
the boundaries of $\Gamma$. Applying the graph automorphism swaps $w$ and $\widetilde{w}$ 
and the values of $d_1$ and $d_2$.

\begin{lem}
   \label{lemBraidStep}
   Then we have: \[ F_{\widehat{m-1}_q} B_{w} \cong B_{\widetilde{w}}(1)[-1] \]
	
   In particular, if $m$ is odd, then $F_{\widehat{m-1}_q}$ turns an anchor corresponding
   to an element in $\pi_s^{-1}(r)$ into an anchor in $\pi_s^{-1}(q)$ and if $m$ is even, then
   $F_{\widehat{m-1}_q}$ preserves setwise anchors corresponding to elements in $\pi_s^{-1}(r)$.
\end{lem}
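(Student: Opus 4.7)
The plan is to compute $F_{\widehat{m-1}_q}B_w$ by induction on the length of the alternating braid. Set $X_n \defeq F_{\widehat{n}_q}B_w$, so that $X_0 = B_w$ and $X_{n+1} = F_{a_{n+1}}X_n$, where $a_{n+1}\in\{q,r\}$ is the leftmost letter of $\widehat{n+1}_q$. I would prove by induction on $n$ that $X_n$ has an explicit minimal form whose indecomposable summands are indexed by vertices of $\Gamma$, and that at $n = m-1$ this minimal form collapses to the single term $B_{\widetilde{w}}(1)[-1]$.

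For the inductive step, I would apply \cref{lemFirstStep} summand-by-summand to the indecomposable pieces $B_{[j]}$ appearing in the current minimal form of $X_n$: each such $F_{a_{n+1}}B_{[j]}$ is either a shifted copy of $B_{[j]}$ (when $\pi_s([j]) = a_{n+1}$) or a two-term complex whose degree-zero part is the sum of $B_y$ over the at most two neighbors $y$ of $[j]$ in $\Gamma$ with $\pi_s(y) = a_{n+1}$, and with $B_{[j]}(1)$ in degree one. Assembling $F_{a_{n+1}}X_n$ as a total bicomplex produces many pairs of identical indecomposables in adjacent cohomological degrees, which I would remove by repeated application of \cref{corGaussElim} to obtain a minimal form of $X_{n+1}$. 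To confirm that the predicted cancellations are exactly the ones that occur, I would run a parallel calculation in the Grothendieck group: using $[F_s] = \std{s} = \kl{s} - v$ together with \cref{lemCellMult}, the class of $X_n$ equals $\std{\widehat{n}_q}\kl{w}$ in the cell module, and a direct induction in the dihedral Hecke algebra of $\langle q,r\rangle$ shows that this expression telescopes and collapses at $n = m-1$ to $-v\kl{[m-k]} = [B_{\widetilde{w}}(1)[-1]]$.

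The main obstacle is verifying, at each Gaussian elimination, that the differential component being cancelled is a nonzero scalar multiple of the identity (so an isomorphism by the one-dimensionality of degree-zero endomorphisms in $\sbim$, \cref{corHomNonNegDeg}). These components are compositions of inclusions of summands into the decomposition of $B_aB_{[j]}$, the Rouquier differential $d_{F_a}\colon B_a \to R(1)$, and projections, and the shape of degree-one morphisms between neighbouring indecomposables from \cref{lemHomsGen} already constrains them to be scalars. Ruling out that the scalar is zero is most cleanly handled indirectly: since $F_{\widehat{n}_q}$ is an autoequivalence of $K^b(\catFixedRightDesc{s})$ (as each $F_s$ is invertible in $K^b(\sbim)$), $X_n$ is indecomposable, and together with \cref{corHomEquiMinComp} this forces the minimal form at each step to be as predicted by the Grothendieck-class computation. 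The ``in particular'' conclusion is then immediate, since $\pi_s([k]) = \pi_s([m-k])$ holds exactly when $m$ is even.
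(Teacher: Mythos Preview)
Your approach is essentially the same as the paper's: both proceed by induction on the length $l$ of the alternating word, using \cref{lemFirstStep} for the base and computing $F_{a_{l+1}}X_l$ summand-by-summand for the inductive step. The paper formulates the induction as an explicit Claim describing the minimal complex $F_{\widehat{l}_q}B_w$ at every intermediate stage via the two-term ``zigzag'' complexes recorded in \cref{tabMinComp}, and then says the inductive step is ``by explicit calculation''; your write-up in fact supplies more detail than the paper does.

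Your use of indecomposability of $X_n$ (via $F_{\widehat{n}_q}$ being an autoequivalence) to avoid checking the Gaussian-elimination scalars directly is a nice observation the paper does not make explicit. One caution: the combination ``Grothendieck class $+$ indecomposability $+$ \cref{corHomEquiMinComp}'' does not by itself determine the minimal form of a complex. What makes it work here is the very constrained shape of the pre-elimination complex: after the degree-$1$/degree-$2$ cancellations (which are guaranteed by \cref{lemFirstStep} applied to the sinks), each surviving $B_y$ in cohomological degree $1$ with $y\in R_n$ can receive incoming differential only from copies of the same $B_y$ in degree $0$ (by \cref{corHomNonNegDeg}, since all degree-$0$ terms have $\pi_s$-value $a_{n+1}$), and there is nothing left in degree $2$. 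Hence if none of those scalars were nonzero, $B_y[-1]$ would split off, contradicting indecomposability. After that cancellation, one must also check that the remaining degree-one components from the new sources to the $B_v(1)$'s are nonzero---this is again forced by indecomposability of the resulting zigzag together with the one-dimensionality of degree-one morphisms between $\Gamma_s$-neighbours from \cref{lemHomsGen}, exactly the lemma the paper invokes. With these points spelled out, your argument goes through.
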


\begin{proof}  
   We will prove the statement by showing the following:
   \begin{claim}
      In $K^b(\catFixedRightDesc{s})$ the complex $F_{\widehat{l}_q} B_w$ is isomorphic to a minimal complex 
      corresponding to a graph in \cref{tabMinComp} where we denoted each indecomposable Soergel bimodule by its index.
      The graphs in \cref{tabMinComp} uniquely determine the isomorphism class of $F_{\widehat{l}_q} B_w$ in 
      $K^b(\catFixedRightDesc{s})$ when placing the indecomposable Soergel bimodules in the left column of 
      each graph without a grading shift in cohomological degree $0$ and using that each arrow stands for 
      a non-zero component of the differential given by the (up to scalar) unique morphism of degree one 
      (see \cref{lemHomsGen}).
   \end{claim}

   \begin{table}[htbp!]
      \begin{adjustbox}{max width=\textwidth}
         \begin{tabular}{cc}
            $\begin{tikzpicture}[node distance=1cm, auto, baseline=(current  bounding  box.center)]
            \tikzstyle{every label}=[text height=\heightof{1}];
            \tikzstyle{every path}=[->, shorten >= 0.5pt, >=stealth']
            \tikzstyle{vertex}=[circle, fill, inner sep=1.5pt, outer sep=0mm];

            \begin{scope}
               \node [vertex, label=left:\footnotesize${[}k+1{]}$] (v1) at (0,0.5) {};
               \node [vertex, label=right:\footnotesize${[}k{]}$] (v0) at (1,0) {};
               \node [vertex, label=left:\footnotesize${[}k-1{]}$, below of=v1] (v-1) {};
               \node [vertex, label=right:\footnotesize${[}k+2{]}$, above of=v0] (v2) {};
               \node [vertex, label=right:\footnotesize${[}k-2{]}$, below of=v0] (v-2) {};
               \node (dotsAbove) at (0.5, 1.75) {\vdots};
               \node (dotsAbove) at (0.5, -1.75) {\vdots};
               \node [vertex, label=left:\footnotesize${[}k+l{]}$, above of=v1, node distance=2.5cm] (vl) {};
               \node [vertex, label=left:\footnotesize${[}k-l{]}$, below of=v-1, node distance=2.5cm] (v-l) {};
               \node [vertex, label=right:\footnotesize${[}k+l-1{]}$, above of=v2, node distance=1.5cm] (vl-1) {};
               \node [vertex, label=right:\footnotesize${[}k-l+1{]}$, below of=v-2, node distance=1.5cm] (v-l+1) {};
               \draw (v1) to (v0);
               \draw (v1) to (v2);
               \draw (v-1) to (v0);
               \draw (v-1) to (v-2);
               \draw (vl) to (vl-1);
               \draw (v-l) to (v-l+1);
               \draw[dotted] (v2)+(157.5:1cm) -- (v2);        
               \draw[dotted] (v-2)+(202.5:1cm) -- (v-2);
               \draw[dotted] (v-l+1)+(157.5:1cm) -- (v-l+1);        
               \draw[dotted] (vl-1)+(202.5:1cm) -- (vl-1);
            \end{scope}
            \end{tikzpicture}$&

            $\begin{tikzpicture}[node distance=1cm, auto, baseline=(current  bounding  box.center)]
            \tikzstyle{every label}=[text height=\heightof{1}];
            \tikzstyle{every path}=[->, shorten >= 0.5pt, >=stealth']
            \tikzstyle{vertex}=[circle, fill, inner sep=1.5pt, outer sep=0mm];

            \begin{scope}
               \node [vertex, label=right:\footnotesize${[}k+1{]}$] (v1) at (1,0.5) {};
               \node [vertex, label=left:\footnotesize${[}k{]}$] (v0) at (0,0) {};
               \node [vertex, label=right:\footnotesize${[}k-1{]}$, below of=v1] (v-1) {};
               \node [vertex, label=left:\footnotesize${[}k+2{]}$, above of=v0] (v2) {};
               \node [vertex, label=left:\footnotesize${[}k-2{]}$, below of=v0] (v-2) {};
               \node (dotsAbove) at (0.5, 1.75) {\vdots};
               \node (dotsAbove) at (0.5, -1.75) {\vdots};
               \node [vertex, label=left:\footnotesize${[}k+l{]}$, above of=v2, node distance=2cm] (vl) {};
               \node [vertex, label=left:\footnotesize${[}k-l{]}$, below of=v-2, node distance=2cm] (v-l) {};
               \node [vertex, label=right:\footnotesize${[}k+l-1{]}$, above of=v1, node distance=2cm] (vl-1) {};
               \node [vertex, label=right:\footnotesize${[}k-l+1{]}$, below of=v-1, node distance=2cm] (v-l+1) {};
               \draw (v0) to (v1);
               \draw (v2) to (v1);
               \draw (v0) to (v-1);
               \draw (v-2) to (v-1);
               \draw (vl) to (vl-1);
               \draw (v-l) to (v-l+1);
               \draw[dotted] (v2) -- ++(22.5:1cm);        
               \draw[dotted] (v-2) -- ++(-22.5:1cm); 
               \draw[dotted] (v-l+1)+(157.5:1cm) -- (v-l+1);       
               \draw[dotted] (vl-1)+(202.5:1cm) -- (vl-1);
            \end{scope}     
            \end{tikzpicture}$ \\

            if $0 \leqslant l \leqslant d_1$  and $l$ is odd & 
            if $0 \leqslant l \leqslant d_1$ and $l$ is even \\[1cm]

            $\begin{tikzpicture}[node distance=1cm, auto, baseline=(current  bounding  box.center)]
            \tikzstyle{every label}=[text height=\heightof{1}];
            \tikzstyle{every path}=[->, shorten >= 0.5pt, >=stealth']
            \tikzstyle{vertex}=[circle, fill, inner sep=1.5pt, outer sep=0mm];

            \node [vertex, label=left:\footnotesize${[}k+l-2{]}$] (vk+l-2) at (0,1) {};
            \node [vertex, label=right:\footnotesize${[}k+l-3{]}$] (vk+l-3) at (1,0.5) {};
            \node [vertex, label=left:\footnotesize${[}k+l{]}$, above of=vk+l-2] (vk+l) {};
            \node [vertex, label=right:\footnotesize${[}k+l-1{]}$, above of=vk+l-3] (vk+l-1) {};
            \node (dots) at (0.5, -0.25) {\vdots};
            \node [vertex, label=right:\footnotesize${[}l-d_1+2{]}$, below of=vk+l-3, node distance=1.5cm] (vl-d1+2) {};
            \node [vertex, label=left:\footnotesize${[}l-d_1+1{]}$, below of=vk+l-2, node distance=2.5cm] (vl-d1+1) {};
            \node [vertex, label=right:\footnotesize${[}l-d_1{]}$, below of=vl-d1+2] (vl-d1) {};
            \draw (vk+l) to (vk+l-1);
            \draw (vk+l-2) to (vk+l-1);
            \draw (vk+l-2) to (vk+l-3);
            \draw (vl-d1+1) to (vl-d1+2);
            \draw (vl-d1+1) to (vl-d1);
            \draw[dotted] (vk+l-3)+(202.5:1cm) -- (vk+l-3);
            \draw[dotted] (vl-d1+2)+(157.5:1cm) --(vl-d1+2); 
            \end{tikzpicture}$ &

            $\begin{tikzpicture}[node distance=1cm, auto, baseline=(current  bounding  box.center)]
            \tikzstyle{every label}=[text height=\heightof{1}];
            \tikzstyle{every path}=[->, shorten >= 0.5pt, >=stealth']
            \tikzstyle{vertex}=[circle, fill, inner sep=1.5pt, outer sep=0mm];

            \node [vertex, label=left:\footnotesize${[}k-l+2{]}$] (vk-l+2) at (0,-1) {};
            \node [vertex, label=right:\footnotesize${[}k-l+3{]}$] (vk-l+3) at (1,-0.5) {};
            \node [vertex, label=left:\footnotesize${[}k-l{]}$, below of=vk-l+2] (vk-l) {};
            \node [vertex, label=right:\footnotesize${[}k-l+1{]}$, below of=vk-l+3] (vk-l+1) {};
            \node (dots) at (0.5, 0.25) {\vdots};
            \node [vertex, label=right:\footnotesize${[}m+d_1-l-2{]}$, above of=vk-l+3, node distance=1.5cm] (vm+d1-l-2) {};
            \node [vertex, label=left:\footnotesize${[}m+d_1-l-1{]}$, above of=vk-l+2, node distance=2.5cm] (vm+d1-l-1) {};
            \node [vertex, label=right:\footnotesize${[}m+d_1-l{]}$, above of=vm+d1-l-2] (vm+d1-l) {};
            \draw (vk-l) to (vk-l+1);
            \draw (vk-l+2) to (vk-l+1);
            \draw (vk-l+2) to (vk-l+3);
            \draw (vm+d1-l-1) to (vm+d1-l-2);
            \draw (vm+d1-l-1) to (vm+d1-l);
            \draw[dotted] (vk-l+3)+(157.5:1cm) -- (vk-l+3);
            \draw[dotted] (vm+d1-l-2)+(202.5:1cm) --(vm+d1-l-2); 
            \end{tikzpicture}$ \\

            if $k-1 = d_1 < l \leqslant d_2$ & 
            if $m-1-k = d_1 < l \leqslant d_2$ \\[1cm]

            $\begin{tikzpicture}[node distance=1cm, auto, baseline=(current  bounding  box.center)]
            \tikzstyle{every label}=[text height=\heightof{1}];
            \tikzstyle{every path}=[->, shorten >= 0.5pt, >=stealth']
            \tikzstyle{vertex}=[circle, fill, inner sep=1.5pt, outer sep=0mm];

            \node [vertex, label=left:\footnotesize${[}m-l+d_2-1{]}$] (vm-l+d2-1) at (0,1) {};
            \node [vertex, label=right:\footnotesize${[}m-l+d_2-2{]}$] (vm-l+d2-2) at (1,0.5) {};
            \node [vertex, label=right:\footnotesize${[}m-l+d_2{]}$, above of=vm-l+d2-2] (vm-l+d2) {};
            \node (dots) at (0.5, -0.25) {\vdots};
            \node [vertex, label=right:\footnotesize${[}l-d_1+2{]}$, below of=vm-l+d2-2, node distance=1.5cm] (vl-d1+2) {};
            \node [vertex, label=left:\footnotesize${[}l-d_1+1{]}$, below of=vm-l+d2-1, node distance=2.5cm] (vl-d1+1) {};
            \node [vertex, label=right:\footnotesize${[}l-d_1{]}$, below of=vl-d1+2] (vl-d1) {};
            \draw (vm-l+d2-1) to (vm-l+d2);
            \draw (vm-l+d2-1) to (vm-l+d2-2);
            \draw (vl-d1+1) to (vl-d1+2);
            \draw (vl-d1+1) to (vl-d1);
            \draw[dotted] (vm-l+d2-2)+(202.5:1cm) -- (vm-l+d2-2);
            \draw[dotted] (vl-d1+2)+(157.5:1cm) -- (vl-d1+2); 
            \end{tikzpicture}$ &

            $\begin{tikzpicture}[node distance=1cm, auto, baseline=(current  bounding  box.center)]
            \tikzstyle{every label}=[text height=\heightof{1}];
            \tikzstyle{every path}=[->, shorten >= 0.5pt, >=stealth']
            \tikzstyle{vertex}=[circle, fill, inner sep=1.5pt, outer sep=0mm];

            \node [vertex, label=left:\footnotesize${[}m-l+d_1-1{]}$] (vm-l+d1-1) at (0,1) {};
            \node [vertex, label=right:\footnotesize${[}m-l+d_1-2{]}$] (vm-l+d1-2) at (1,0.5) {};
            \node [vertex, label=right:\footnotesize${[}m-l+d_1{]}$, above of=vm-l+d1-2] (vm-l+d1) {};
            \node (dots) at (0.5, -0.25) {\vdots};
            \node [vertex, label=right:\footnotesize${[}l-d_2+2{]}$, below of=vm-l+d1-2, node distance=1.5cm] (vl-d2+2) {};
            \node [vertex, label=left:\footnotesize${[}l-d_2+1{]}$, below of=vm-l+d1-1, node distance=2.5cm] (vl-d2+1) {};
            \node [vertex, label=right:\footnotesize${[}l-d_2{]}$, below of=vl-d2+2] (vl-d2) {};
            \draw (vm-l+d1-1) to (vm-l+d1);
            \draw (vm-l+d1-1) to (vm-l+d1-2);
            \draw (vl-d2+1) to (vl-d2+2);
            \draw (vl-d2+1) to (vl-d2);
            \draw[dotted] (vm-l+d1-2)+(202.5:1cm) -- (vm-l+d1-2);
            \draw[dotted] (vl-d2+2)+(157.5:1cm) -- (vl-d2+2); 
            \end{tikzpicture}$ \\

            if $k-1 = d_1 \leqslant d_2 < l \leqslant m-1$ & 
            if $m-k-1 = d_1 \leqslant d_2 < l \leqslant m-1$ \\[1cm]
         \end{tabular}
      \end{adjustbox}
      \caption{Graphs describing $F_{\widehat{l}_q} B_w$}
      \label{tabMinComp}
   \end{table}
   
   Before proving the claim we want to make a few remarks. First note that as long as $l$ 
   satisfies $0 \leqslant l < d_1$ (resp. $l = d_1 \neq d_2$) the number of indecomposable Soergel 
   bimodules occuring in the minimal complex increases by $2$ (resp. $1$) when $l$ is increased 
   by $1$. For $l$ in the range $d_1 < l < d_2$ (resp. $l = d_1 = d_2$) the number of indecomposable Soergel 
   bimodules occuring in the minimal complex is constantly equal to $2d_1+2$ (resp. $2d_1+1=m-1$ in 
   this case). As long as $l$ satisfies $d_2 < l < m-1$ (resp. $d_1 \neq d_2 = l$) the number 
   of indecomposable Soergel bimodules occuring in the minimal complex decreases by $2$ 
   (resp. $1$) when $l$ is increased by $1$. Thus for $l=m-1$ there remains only one 
   indecomposable Soergel bimodule in the right column of the graph (i.e.  
	 with grading shift $(1)$ in the first cohomological degree of the corresponding minimal 
	 complex) (as $m-1 - d_2 -1 = d_1$) and it corresponds to $\widetilde{w}$. 
	 Therefore the claim implies the statement of the lemma.

	 The reader should think of these graphs as describing a one-dimensional wave oscillating
	 in a bounded region (i.e. the graph $\Gamma$) at discrete time values $t=0, \dots, m-1$. 
	 At $t=0$ the wave is stimulated in a point (i.e. the vertex $B_w$ of $\Gamma$). 
	 From $t=n$ to $t=n+1$ all wave crests transform into wave troughs and vice versa.
	 The evolution of the wave can be divided into three periods: First for $0 
	 \leqslant t \leqslant d_1$ the wave front travels in both directions and thus the agitated 
	 region grows until the first wave front hits the nearest boundary where it gets reflected.
	 Then for $d_1 < t \leqslant d_2$ the resulting wave packet propagates towards the other 
	 boundary where the wave front again gets reflected in such a way that the superposition
	 of the resulting wave fronts leads to extinction. Finally for $d_2 < t \leqslant m-1$ the
	 agitated region shrinks and the wave eventually vanishes. Each period corresponds to
	 one row in the above table.
	
   Finally it should be noted that for $0 < l < m-1$ all the sources (resp. sinks) 
   in the minimal complex describing $F_{\widehat{l}_q} B_w$ are mapped under $\pi_s$ 
   to the unique element in $\desc{L}(\widehat{l}_q)$ (resp. $\{r, q\} \setminus 
   \desc{L}(\widehat{l}_q)=\desc{L}(\widehat{l+1}_q)$).
   
   The claim is proven by induction on $l$ and explicit calculation. For $l=0$ nothing has
   to be checked and the case $l=1$ follows from \cref{lemFirstStep}.
\end{proof}

\begin{ex}
   We want to illustrate the claim of the last proof in the case $m_{q, r} = 8$.
%
   We get for $\Gamma = \pi_s^{-1}(\{q,r\})$
   \[ \begin{tikzpicture}[node distance=1cm, auto, baseline=(current  bounding  box.center)]
      \tikzstyle{every label}=[text height=\heightof{(1)}];
      \tikzstyle{vertex}=[circle, fill, inner sep=1.5pt, outer sep=0mm];
      \tikzstyle{autom}=[<->, shorten >= 2pt, shorten <= 2pt, >=stealth', bend left, red];
      
      \node [vertex, label=below:\small${[}1{]}$] (v1) at (0,0) {};
      \node [vertex, label=below:\small${[}2{]}$, right of=v1] (v2) {};
      \node [vertex, label=below:\small${[}3{]}$, right of=v2] (v3) {};
      \node [vertex, label=below:\small${[}4{]}$, right of=v3] (v4) {};
      \node [vertex, label=below:\small${[}5{]}$, right of=v4] (v5) {};
      \node [vertex, label=below:\small${[}6{]}$, right of=v5] (v6) {};
      \node [vertex, label=below:\small${[}7{]}$, right of=v6] (v7) {};
      \draw (v1) to (v2);
      \draw (v2) to (v3);
      \draw (v3) to (v4);
      \draw (v4) to (v5);
      \draw (v5) to (v6);
      \draw (v6) to (v7);
   \end{tikzpicture} \]
   where $\pi_s^{-1}(r) = \{[1],[3],[5],[7]\}$ and $\pi_s^{-1}(q) = \{[2],[4],[6]\}$.
   For $k=3$ we have the following graphs for $F_{\widehat{l}_q} B_{[k]}$ in the first phase:
   \begin{center}
      \begin{tabular}{ccc}
         $l=0$ & $l=1$ & $l=2$ \\
      
         $\begin{tikzpicture}[node distance=1cm, auto, baseline=(current  bounding  box.center)]
            \tikzstyle{every label}=[text height=\heightof{1}];
            \tikzstyle{every path}=[->, shorten >= 0.5pt, >=stealth']
            \tikzstyle{vertex}=[circle, fill, inner sep=1.5pt, outer sep=0mm];
            \tikzstyle{non-ex}=[black!30, fill=black!30, label={[black!30]#1}];
            
            \node [vertex, label=left:\footnotesize${[}3{]}$] (v3) at (0,-1) {};
            \node [vertex, non-ex=right:\footnotesize${[}4{]}$] (v4) at (1,-0.5) {};
            \node [vertex, non-ex=left:\footnotesize${[}1{]}$, below of=v3] (v1) {};
            \node [vertex, non-ex=right:\footnotesize${[}2{]}$, below of=v4] (v2) {};
            \node [vertex, non-ex=left:\footnotesize${[}5{]}$, above of=v3] (v5) {};
            \node [vertex, non-ex=right:\footnotesize${[}6{]}$, above of=v4] (v6) {};
            \node [vertex, non-ex=left:\footnotesize${[}7{]}$, above of=v5] (v7) {};
         \end{tikzpicture}$ &
         
         $\begin{tikzpicture}[node distance=1cm, auto, baseline=(current  bounding  box.center)]
            \tikzstyle{every label}=[text height=\heightof{1}];
            \tikzstyle{every path}=[->, shorten >= 0.5pt, >=stealth']
            \tikzstyle{vertex}=[circle, fill, inner sep=1.5pt, outer sep=0mm];
            \tikzstyle{non-ex}=[black!30, fill=black!30, label={[black!30]#1}];
            
            \node [vertex, label=right:\footnotesize${[}3{]}$] (v3) at (1,-0.5) {};
            \node [vertex, label=left:\footnotesize${[}4{]}$] (v4) at (0,0) {};
            \node [vertex, non-ex=right:\footnotesize${[}1{]}$, below of=v3] (v1) {};
            \node [vertex, label=left:\footnotesize${[}2{]}$, below of=v4] (v2) {};
            \node [vertex, non-ex=right:\footnotesize${[}5{]}$, above of=v3] (v5) {};
            \node [vertex, non-ex=left:\footnotesize${[}6{]}$, above of=v4] (v6) {};
            \node [vertex, non-ex=right:\footnotesize${[}7{]}$, above of=v5] (v7) {};
            \draw (v2) to (v3);
            \draw (v4) to (v3);
         \end{tikzpicture}$ &
         
         $\begin{tikzpicture}[node distance=1cm, auto, baseline=(current  bounding  box.center)]
            \tikzstyle{every label}=[text height=\heightof{1}];
            \tikzstyle{every path}=[->, shorten >= 0.5pt, >=stealth']
            \tikzstyle{vertex}=[circle, fill, inner sep=1.5pt, outer sep=0mm];
            \tikzstyle{non-ex}=[black!30, fill=black!30, label={[black!30]#1}];
            
            \node [vertex, label=left:\footnotesize${[}3{]}$] (v3) at (0,-1) {};
            \node [vertex, label=right:\footnotesize${[}4{]}$] (v4) at (1,-0.5) {};
            \node [vertex, label=left:\footnotesize${[}1{]}$, below of=v3] (v1) {};
            \node [vertex, label=right:\footnotesize${[}2{]}$, below of=v4] (v2) {};
            \node [vertex, label=left:\footnotesize${[}5{]}$, above of=v3] (v5) {};
            \node [vertex, non-ex=right:\footnotesize${[}6{]}$, above of=v4] (v6) {};
            \node [vertex, non-ex=left:\footnotesize${[}7{]}$, above of=v5] (v7) {};
            \draw (v1) to (v2);
            \draw (v3) to (v2);
            \draw (v3) to (v4);
            \draw (v5) to (v4);
         \end{tikzpicture}$
      \end{tabular}
   \end{center}
   The second phase looks like:
   \begin{center}
      \begin{tabular}{cccc}
         $l=3$ & $l=4$ \\
         
         $\begin{tikzpicture}[node distance=1cm, auto, baseline=(current  bounding  box.center)]
            \tikzstyle{every label}=[text height=\heightof{1}];
            \tikzstyle{every path}=[->, shorten >= 0.5pt, >=stealth']
            \tikzstyle{vertex}=[circle, fill, inner sep=1.5pt, outer sep=0mm];
            \tikzstyle{non-ex}=[black!30, fill=black!30, label={[black!30]#1}];
            
            \node [vertex, label=right:\footnotesize${[}3{]}$] (v3) at (1,-0.5) {};
            \node [vertex, label=left:\footnotesize${[}4{]}$] (v4) at (0,0) {};
            \node [vertex, label=right:\footnotesize${[}1{]}$, below of=v3] (v1) {};
            \node [vertex, label=left:\footnotesize${[}2{]}$, below of=v4] (v2) {};
            \node [vertex, label=right:\footnotesize${[}5{]}$, above of=v3] (v5) {};
            \node [vertex, label=left:\footnotesize${[}6{]}$, above of=v4] (v6) {};
            \node [vertex, non-ex=right:\footnotesize${[}7{]}$, above of=v5] (v7) {};
            \draw (v2) to (v1);
            \draw (v2) to (v3);
            \draw (v4) to (v3);
            \draw (v4) to (v5);
            \draw (v6) to (v5);
         \end{tikzpicture}$ &
         
         $\begin{tikzpicture}[node distance=1cm, auto, baseline=(current  bounding  box.center)]
            \tikzstyle{every label}=[text height=\heightof{1}];
            \tikzstyle{every path}=[->, shorten >= 0.5pt, >=stealth']
            \tikzstyle{vertex}=[circle, fill, inner sep=1.5pt, outer sep=0mm];
            \tikzstyle{non-ex}=[black!30, fill=black!30, label={[black!30]#1}];
            
            \node [vertex, label=left:\footnotesize${[}3{]}$] (v3) at (0,-1) {};
            \node [vertex, label=right:\footnotesize${[}4{]}$] (v4) at (1,-0.5) {};
            \node [vertex, non-ex=left:\footnotesize${[}1{]}$, below of=v3] (v1) {};
            \node [vertex, label=right:\footnotesize${[}2{]}$, below of=v4] (v2) {};
            \node [vertex, label=left:\footnotesize${[}5{]}$, above of=v3] (v5) {};
            \node [vertex, label=right:\footnotesize${[}6{]}$, above of=v4] (v6) {};
            \node [vertex, label=left:\footnotesize${[}7{]}$, above of=v5] (v7) {};
            \draw (v3) to (v2);
            \draw (v3) to (v4);
            \draw (v5) to (v4);
            \draw (v5) to (v6);
            \draw (v7) to (v6);
         \end{tikzpicture}$
      \end{tabular}
   \end{center}
   And in the last phase we get:
   \begin{center}
      \begin{tabular}{ccc}
         $l=5$ & $l=6$ & $l=7$ \\
         
         $\begin{tikzpicture}[node distance=1cm, auto, baseline=(current  bounding  box.center)]
            \tikzstyle{every label}=[text height=\heightof{1}];
            \tikzstyle{every path}=[->, shorten >= 0.5pt, >=stealth']
            \tikzstyle{vertex}=[circle, fill, inner sep=1.5pt, outer sep=0mm];
            \tikzstyle{non-ex}=[black!30, fill=black!30, label={[black!30]#1}];
            
            \node [vertex, label=right:\footnotesize${[}3{]}$] (v3) at (1,-0.5) {};
            \node [vertex, label=left:\footnotesize${[}4{]}$] (v4) at (0,0) {};
            \node [vertex, non-ex=right:\footnotesize${[}1{]}$, below of=v3] (v1) {};
            \node [vertex, non-ex=left:\footnotesize${[}2{]}$, below of=v4] (v2) {};
            \node [vertex, label=right:\footnotesize${[}5{]}$, above of=v3] (v5) {};
            \node [vertex, label=left:\footnotesize${[}6{]}$, above of=v4] (v6) {};
            \node [vertex, label=right:\footnotesize${[}7{]}$, above of=v5] (v7) {};
            \draw (v4) to (v3);
            \draw (v4) to (v5);
            \draw (v6) to (v5);
            \draw (v6) to (v7);
         \end{tikzpicture}$ &
         
         $\begin{tikzpicture}[node distance=1cm, auto, baseline=(current  bounding  box.center)]
            \tikzstyle{every label}=[text height=\heightof{1}];
            \tikzstyle{every path}=[->, shorten >= 0.5pt, >=stealth']
            \tikzstyle{vertex}=[circle, fill, inner sep=1.5pt, outer sep=0mm];
            \tikzstyle{non-ex}=[black!30, fill=black!30, label={[black!30]#1}];
            
            \node [vertex, non-ex=left:\footnotesize${[}3{]}$] (v3) at (0,-1) {};
            \node [vertex, label=right:\footnotesize${[}4{]}$] (v4) at (1,-0.5) {};
            \node [vertex, non-ex=left:\footnotesize${[}1{]}$, below of=v3] (v1) {};
            \node [vertex, non-ex=right:\footnotesize${[}2{]}$, below of=v4] (v2) {};
            \node [vertex, label=left:\footnotesize${[}5{]}$, above of=v3] (v5) {};
            \node [vertex, label=right:\footnotesize${[}6{]}$, above of=v4] (v6) {};
            \node [vertex, non-ex=left:\footnotesize${[}7{]}$, above of=v5] (v7) {};
            \draw (v5) to (v4);
            \draw (v5) to (v6);
         \end{tikzpicture}$ &
         
         $\begin{tikzpicture}[node distance=1cm, auto, baseline=(current  bounding  box.center)]
            \tikzstyle{every label}=[text height=\heightof{1}];
            \tikzstyle{every path}=[->, shorten >= 0.5pt, >=stealth']
            \tikzstyle{vertex}=[circle, fill, inner sep=1.5pt, outer sep=0mm];
            \tikzstyle{non-ex}=[black!30, fill=black!30, label={[black!30]#1}];
            
            \node [vertex, non-ex=right:\footnotesize${[}3{]}$] (v3) at (1,-0.5) {};
            \node [vertex, non-ex=left:\footnotesize${[}4{]}$] (v4) at (0,0) {};
            \node [vertex, non-ex=right:\footnotesize${[}1{]}$, below of=v3] (v1) {};
            \node [vertex, non-ex=left:\footnotesize${[}2{]}$, below of=v4] (v2) {};
            \node [vertex, label=right:\footnotesize${[}5{]}$, above of=v3] (v5) {};
            \node [vertex, non-ex=left:\footnotesize${[}6{]}$, above of=v4] (v6) {};
            \node [vertex, non-ex=right:\footnotesize${[}7{]}$, above of=v5] (v7) {};
         \end{tikzpicture}$
      \end{tabular}
   \end{center}
   
   For $k=4$ the following graphs describe $F_{\widehat{l}_r} B_{[k]}$ in the first phase:
   \begin{center}
      \begin{tabular}{cccc}
         $l=0$ & $l=1$ & $l=2$ & $l=3$ \\

         $\begin{tikzpicture}[node distance=1cm, auto, baseline=(current  bounding  box.center)]
            \tikzstyle{every label}=[text height=\heightof{1}];
            \tikzstyle{every path}=[->, shorten >= 0.5pt, >=stealth']
            \tikzstyle{vertex}=[circle, fill, inner sep=1.5pt, outer sep=0mm];
            \tikzstyle{non-ex}=[black!30, fill=black!30, label={[black!30]#1}];
            
            \node [vertex, non-ex=right:\footnotesize${[}3{]}$] (v3) at (1,-0.5) {};
            \node [vertex, label=left:\footnotesize${[}4{]}$] (v4) at (0,0) {};
            \node [vertex, non-ex=right:\footnotesize${[}1{]}$, below of=v3] (v1) {};
            \node [vertex, non-ex=left:\footnotesize${[}2{]}$, below of=v4] (v2) {};
            \node [vertex, non-ex=right:\footnotesize${[}5{]}$, above of=v3] (v5) {};
            \node [vertex, non-ex=left:\footnotesize${[}6{]}$, above of=v4] (v6) {};
            \node [vertex, non-ex=right:\footnotesize${[}7{]}$, above of=v5] (v7) {};
         \end{tikzpicture}$ &
         
         $\begin{tikzpicture}[node distance=1cm, auto, baseline=(current  bounding  box.center)]
            \tikzstyle{every label}=[text height=\heightof{1}];
            \tikzstyle{every path}=[->, shorten >= 0.5pt, >=stealth']
            \tikzstyle{vertex}=[circle, fill, inner sep=1.5pt, outer sep=0mm];
            \tikzstyle{non-ex}=[black!30, fill=black!30, label={[black!30]#1}];
            
            \node [vertex, label=left:\footnotesize${[}3{]}$] (v3) at (0,-1) {};
            \node [vertex, label=right:\footnotesize${[}4{]}$] (v4) at (1,-0.5) {};
            \node [vertex, non-ex=left:\footnotesize${[}1{]}$, below of=v3] (v1) {};
            \node [vertex, non-ex=right:\footnotesize${[}2{]}$, below of=v4] (v2) {};
            \node [vertex, label=left:\footnotesize${[}5{]}$, above of=v3] (v5) {};
            \node [vertex, non-ex=right:\footnotesize${[}6{]}$, above of=v4] (v6) {};
            \node [vertex, non-ex=left:\footnotesize${[}7{]}$, above of=v5] (v7) {};
            \draw (v3) to (v4);
            \draw (v5) to (v4);
         \end{tikzpicture}$ &
         
         $\begin{tikzpicture}[node distance=1cm, auto, baseline=(current  bounding  box.center)]
            \tikzstyle{every label}=[text height=\heightof{1}];
            \tikzstyle{every path}=[->, shorten >= 0.5pt, >=stealth']
            \tikzstyle{vertex}=[circle, fill, inner sep=1.5pt, outer sep=0mm];
            \tikzstyle{non-ex}=[black!30, fill=black!30, label={[black!30]#1}];
            
            \node [vertex, label=right:\footnotesize${[}3{]}$] (v3) at (1,-0.5) {};
            \node [vertex, label=left:\footnotesize${[}4{]}$] (v4) at (0,0) {};
            \node [vertex, non-ex=right:\footnotesize${[}1{]}$, below of=v3] (v1) {};
            \node [vertex, label=left:\footnotesize${[}2{]}$, below of=v4] (v2) {};
            \node [vertex, label=right:\footnotesize${[}5{]}$, above of=v3] (v5) {};
            \node [vertex, label=left:\footnotesize${[}6{]}$, above of=v4] (v6) {};
            \node [vertex, non-ex=right:\footnotesize${[}7{]}$, above of=v5] (v7) {};
            \draw (v2) to (v3);
            \draw (v4) to (v3);
            \draw (v4) to (v5);
            \draw (v6) to (v5);
         \end{tikzpicture}$ &

         $\begin{tikzpicture}[node distance=1cm, auto, baseline=(current  bounding  box.center)]
            \tikzstyle{every label}=[text height=\heightof{1}];
            \tikzstyle{every path}=[->, shorten >= 0.5pt, >=stealth']
            \tikzstyle{vertex}=[circle, fill, inner sep=1.5pt, outer sep=0mm];
            \tikzstyle{non-ex}=[black!30, fill=black!30, label={[black!30]#1}];
            
            \node [vertex, label=left:\footnotesize${[}3{]}$] (v3) at (0,-1) {};
            \node [vertex, label=right:\footnotesize${[}4{]}$] (v4) at (1,-0.5) {};
            \node [vertex, label=left:\footnotesize${[}1{]}$, below of=v3] (v1) {};
            \node [vertex, label=right:\footnotesize${[}2{]}$, below of=v4] (v2) {};
            \node [vertex, label=left:\footnotesize${[}5{]}$, above of=v3] (v5) {};
            \node [vertex, label=right:\footnotesize${[}6{]}$, above of=v4] (v6) {};
            \node [vertex, label=left:\footnotesize${[}7{]}$, above of=v5] (v7) {};
            \draw (v1) to (v2);
            \draw (v3) to (v2);
            \draw (v3) to (v4);
            \draw (v5) to (v4);
            \draw (v5) to (v6);
            \draw (v7) to (v6);
         \end{tikzpicture}$
      \end{tabular}
   \end{center}
   
   The second phase completely collapses and in the third phase we have:
   \begin{center}
      \begin{tabular}{cccc}
         $l=4$ & $l=5$ & $l=6$ & $l=7$ \\

         $\begin{tikzpicture}[node distance=1cm, auto, baseline=(current  bounding  box.center)]
            \tikzstyle{every label}=[text height=\heightof{1}];
            \tikzstyle{every path}=[->, shorten >= 0.5pt, >=stealth']
            \tikzstyle{vertex}=[circle, fill, inner sep=1.5pt, outer sep=0mm];
            \tikzstyle{non-ex}=[black!30, fill=black!30, label={[black!30]#1}];
            
            \node [vertex, label=right:\footnotesize${[}3{]}$] (v3) at (1,-0.5) {};
            \node [vertex, label=left:\footnotesize${[}4{]}$] (v4) at (0,0) {};
            \node [vertex, label=right:\footnotesize${[}1{]}$, below of=v3] (v1) {};
            \node [vertex, label=left:\footnotesize${[}2{]}$, below of=v4] (v2) {};
            \node [vertex, label=right:\footnotesize${[}5{]}$, above of=v3] (v5) {};
            \node [vertex, label=left:\footnotesize${[}6{]}$, above of=v4] (v6) {};
            \node [vertex, label=right:\footnotesize${[}7{]}$, above of=v5] (v7) {};
            \draw (v2) to (v1);
            \draw (v2) to (v3);
            \draw (v4) to (v3);
            \draw (v4) to (v5);
            \draw (v6) to (v5);
            \draw (v6) to (v7);
         \end{tikzpicture}$ &
         
         $\begin{tikzpicture}[node distance=1cm, auto, baseline=(current  bounding  box.center)]
            \tikzstyle{every label}=[text height=\heightof{1}];
            \tikzstyle{every path}=[->, shorten >= 0.5pt, >=stealth']
            \tikzstyle{vertex}=[circle, fill, inner sep=1.5pt, outer sep=0mm];
            \tikzstyle{non-ex}=[black!30, fill=black!30, label={[black!30]#1}];
            
            \node [vertex, label=left:\footnotesize${[}3{]}$] (v3) at (0,-1) {};
            \node [vertex, label=right:\footnotesize${[}4{]}$] (v4) at (1,-0.5) {};
            \node [vertex, non-ex=left:\footnotesize${[}1{]}$, below of=v3] (v1) {};
            \node [vertex, label=right:\footnotesize${[}2{]}$, below of=v4] (v2) {};
            \node [vertex, label=left:\footnotesize${[}5{]}$, above of=v3] (v5) {};
            \node [vertex, label=right:\footnotesize${[}6{]}$, above of=v4] (v6) {};
            \node [vertex, non-ex=left:\footnotesize${[}7{]}$, above of=v5] (v7) {};
            \draw (v3) to (v4);
            \draw (v3) to (v2);
            \draw (v5) to (v4);
            \draw (v5) to (v6);
         \end{tikzpicture}$ &
         
         $\begin{tikzpicture}[node distance=1cm, auto, baseline=(current  bounding  box.center)]
            \tikzstyle{every label}=[text height=\heightof{1}];
            \tikzstyle{every path}=[->, shorten >= 0.5pt, >=stealth']
            \tikzstyle{vertex}=[circle, fill, inner sep=1.5pt, outer sep=0mm];
            \tikzstyle{non-ex}=[black!30, fill=black!30, label={[black!30]#1}];
            
            \node [vertex, label=right:\footnotesize${[}3{]}$] (v3) at (1,-0.5) {};
            \node [vertex, label=left:\footnotesize${[}4{]}$] (v4) at (0,0) {};
            \node [vertex, non-ex=right:\footnotesize${[}1{]}$, below of=v3] (v1) {};
            \node [vertex, non-ex=left:\footnotesize${[}2{]}$, below of=v4] (v2) {};
            \node [vertex, label=right:\footnotesize${[}5{]}$, above of=v3] (v5) {};
            \node [vertex, non-ex=left:\footnotesize${[}6{]}$, above of=v4] (v6) {};
            \node [vertex, non-ex=right:\footnotesize${[}7{]}$, above of=v5] (v7) {};
            \draw (v4) to (v3);
            \draw (v4) to (v5);
         \end{tikzpicture}$ &

         $\begin{tikzpicture}[node distance=1cm, auto, baseline=(current  bounding  box.center)]
            \tikzstyle{every label}=[text height=\heightof{1}];
            \tikzstyle{every path}=[->, shorten >= 0.5pt, >=stealth']
            \tikzstyle{vertex}=[circle, fill, inner sep=1.5pt, outer sep=0mm];
            \tikzstyle{non-ex}=[black!30, fill=black!30, label={[black!30]#1}];
            
            \node [vertex, non-ex=left:\footnotesize${[}3{]}$] (v3) at (0,-1) {};
            \node [vertex, label=right:\footnotesize${[}4{]}$] (v4) at (1,-0.5) {};
            \node [vertex, non-ex=left:\footnotesize${[}1{]}$, below of=v3] (v1) {};
            \node [vertex, non-ex=right:\footnotesize${[}2{]}$, below of=v4] (v2) {};
            \node [vertex, non-ex=left:\footnotesize${[}5{]}$, above of=v3] (v5) {};
            \node [vertex, non-ex=right:\footnotesize${[}6{]}$, above of=v4] (v6) {};
            \node [vertex, non-ex=left:\footnotesize${[}7{]}$, above of=v5] (v7) {};
         \end{tikzpicture}$
      \end{tabular}
   \end{center}
\end{ex}

The main goal of this section is to prove the following theorem:

\begin{thm}
   \label{thmGarsideAct}
   Let $\sigma = w_m w_{m-1} \dots w_1 \in \brm[(W, S)]$ be a non-trivial braid in Garside normal form.
   Set $B = \bigoplus_{w \in \lcellFixedRightDesc{s}} B_w$. Then the following holds:
   \begin{enumerate}
      \item If $k$ is maximal such that $\pH{k}(F_{\sigma}(B))$ is non-zero, then $k = m$.
      \item Let $\lcellFixedRightDesc{s}^{\sigma}$ be the set of Coxeter elements in $\lcellFixedRightDesc{s}$ indexing
            an anchor in $\pH{m}(F_{\sigma}(B))$. Then $\pi_s$ induces a surjection from 
            $\lcellFixedRightDesc{s}^{\sigma}$ onto $\desc{L}(w_m)$.
   \end{enumerate}
   If in addition $(W, S)$ is a Coxeter group of simply-laced type whose Coxeter graph is a tree, 
   $\pi_s$ gives a bijection between $\lcellFixedRightDesc{s}^{\sigma}$ and $\desc{L}(w_m)$.
\end{thm}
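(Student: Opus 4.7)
The plan is to induct on $\ell(\sigma) := \sum_{i=1}^m l(x_i)$, where $x_i := \psi(w_i) \in W$ are the images of the Garside factors. The base case $\ell(\sigma)=1$ forces $\sigma = \varphi(s)$ with $m=1$: by \cref{lemFirstStep}, $F_s B_w \cong B_w(-1)$ (perverse at degree one) when $\pi_s(w)=s$, and $F_s B_w$ is perverse at degree zero otherwise. Since $s \in \lcellFixedRightDesc{s} \cap \pi_s^{-1}(s)$, the top perverse cohomology of $F_s B$ sits in degree one and its anchors project exactly onto $\{s\} = \desc{L}(x_1)$.

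For the inductive step, choose $s \in \desc{L}(x_m)$ and write $\sigma = \varphi(s) \cdot \sigma'$, so $\ell(\sigma') = \ell(\sigma) - 1$. If $l(x_m)=1$, then $\sigma'$ has $m-1$ Garside factors and \cref{lemGarsideNormFormChar} yields $s \in \desc{R}(x_m) \subseteq \desc{L}(x_{m-1})$; by induction $s$ lies in the anchor projection of $F_{\sigma'} B$, so \cref{corAnchorMaxPH} bumps the maximal perverse cohomology degree up to $m$ and identifies the new anchors as indexed by elements of $\pi_s^{-1}(s)$, which project exactly to $\{s\} = \desc{L}(x_m)$, settling (i) and (ii). If instead $l(x_m) > 1$, set $x_m' := s x_m \in W$; the inclusions $\desc{R}(x_m') \subseteq \desc{R}(x_m) \subseteq \desc{L}(x_{m-1})$ confirm that $\sigma' = \varphi(x_m') \, w_{m-1} \cdots w_1$ is still in Garside normal form (with $m$ factors), while reducedness of $x_m = s x_m'$ forces $s \notin \desc{L}(x_m')$. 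By induction $s$ lies outside the anchor projection of $F_{\sigma'} B$, so \cref{corNoAnchorMaxPH} keeps the maximal perverse degree at $m$, which gives (i).

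The main obstacle is pinning down the anchor projection in this second subcase. The natural tool is the distinguished triangle
\[\tauleq{m-1}(F_{\sigma'} B) \longrightarrow F_{\sigma'} B \longrightarrow \pH{m}(F_{\sigma'} B)[-m] \overset{[1]}{\longrightarrow}.\]
Tensoring with $F_s$ and taking the long exact sequence for $\pH{\bullet}$, one observes that both extreme terms lie in $\KCleq{m}$ and obtains the short exact sequence
\[0 \longrightarrow \pH{m}(F_s \tauleq{m-1} F_{\sigma'} B) \longrightarrow \pH{m}(F_\sigma B) \longrightarrow F_s \pH{m}(F_{\sigma'} B) \longrightarrow 0\]
in the heart of the perverse $t$-structure. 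Since, by the induction hypothesis, every indecomposable Soergel bimodule appearing as an anchor of the perverse object $\pH{m}(F_{\sigma'} B)$ has its $\pi_s$-image in $\desc{L}(x_m')$ and hence differs from $s$, \cref{lemFirstStep} presents the right-hand term as a perverse complex whose anchors all project to $\{s\}$, placing $s$ in the anchor projection of $\pH{m}(F_\sigma B)$. The remaining anchors, those projecting to the elements of $\desc{L}(x_m) \setminus \{s\}$, must be produced from the kernel $\pH{m}(F_s \tauleq{m-1} F_{\sigma'} B)$; this is the purpose of a secondary induction that invokes \cref{lemBraidStep} to propagate anchors through the maximal dihedral-parabolic blocks of $\Gamma_s$ and tracks how the Garside compatibility threads through the lower perverse cohomology of $F_{\sigma'} B$. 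This descent-set bookkeeping is the principal technical difficulty of the argument.

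The bijection assertion in the simply-laced tree case is automatic from \cref{lemGammaSFacts}: there $\pi_s \colon \lcellFixedRightDesc{s} \to S$ is globally injective, so the surjection in (ii) is immediately a bijection.
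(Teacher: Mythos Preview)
Your inductive framework, the base case, and the $l(x_m)=1$ subcase are essentially the paper's argument. The real divergence is in the $l(x_m)>1$ subcase, and there your proposal has a genuine gap.

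The short exact sequence
\[
0 \longrightarrow \pH{m}\bigl(F_s\,\tauleq{m-1} F_{\sigma'} B\bigr) \longrightarrow \pH{m}(F_\sigma B) \longrightarrow F_s\,\pH{m}(F_{\sigma'} B) \longrightarrow 0
\]
is correct, but it does not let you finish. The induction hypothesis tells you only about the anchors of the \emph{top} perverse cohomology of $F_{\sigma'}B$; it says nothing about $\tauleq{m-1}F_{\sigma'}B$. Hence you have no control over the kernel term, and you cannot produce anchors for the elements of $\desc{L}(x_m)\setminus\{s\}$ from it. You acknowledge this yourself (``the principal technical difficulty''), but the suggested ``secondary induction'' that ``tracks how the Garside compatibility threads through the lower perverse cohomology'' is not an argument --- it is precisely the missing content. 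Moreover, even if the kernel had the right anchors, anchors of a subobject in the heart need not survive to the middle term (the map $\Hom(B,-)\to\Hom(A,-)$ need not be surjective), so the mechanism by which they would transfer is unclear. Finally, the containment direction of (ii) --- that every anchor of $\pH{m}(F_\sigma B)$ projects into $\desc{L}(x_m)$ --- is not addressed at all by your SES argument in this subcase; controlling anchors of the quotient $F_s\,\pH{m}(F_{\sigma'}B)$ alone is not enough, and your claim that those anchors ``all project to $\{s\}$'' is itself not justified by \cref{lemFirstStep} (applying $F_s$ to a perverse complex and then passing to a minimal model involves cancellations you have not analyzed).

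The paper avoids the lower perverse cohomology entirely. Instead of peeling off one simple reflection and hoping the SES sorts out the rest, it fixes each target $r\in\desc{L}(w_m)$ (respectively each $r\notin\desc{L}(w_m)$) separately and writes $w_m$ relative to \emph{that} $r$: one looks at the dihedral pair $\{r,t\}$ formed with the next letter $t$, does a case analysis on $m_{r,t}$, and then applies a suitable autoequivalence $F_\alpha$ to a Hom-space of the form $\Hom_{K^b}(F_{\text{shorter}}B,\,B_w(\ast)[\ast])$. The point is that $F_\alpha B_w$ is computed explicitly by \cref{lemBraidStep} (and its intermediate two-term complexes), so one can read off directly whether the transported Hom-space vanishes. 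This Hom-space/autoequivalence technique is what replaces your SES, and it never requires information about $\tauleq{m-1}F_{\sigma'}B$.
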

\begin{proof}
   To simplify notation, we will identify $W$ with the set of reduced braids $\varphi(w) \subset
   \brm[(W, S)]$. We will state explicitly if a certain identity only holds in the Coxeter
   group.
  
   We will prove by simultaneous induction on $m$ and on $l(w_m)$ the following three statements from
   which the theorem follows:
   \begin{enumerate}
      \item[P)] $\pH{k}(F_{\sigma}(B)) = 0$ for $k > m$.
      \item[L)] For all $t \in \desc{L}(w_m)$ there exists an element $w \in \pi_s^{-1}(t)$ such that
                the indecomposable Soergel bimodule $B_{w}$ is an anchor of $F_{\sigma}(B)$.                
      \item[A)] Any anchor $B_w$ of $F_{\sigma}(B)$ satisfies $\pi_s(w) \in \desc{L}(w_m)$. 
   \end{enumerate}
   
   Note that under the additional assumptions that $(W, S)$ is a Coxeter group of simply-laced 
   type whose Coxeter graph is a tree, $\pi_s$ is injective and thus the last statement
   of the theorem follows from L).
   
   The proof strategy actually is as follows: We apply induction on $m$; in the base case
   as well as in the inductive step we apply induction on $l(w_m)$. Since the inductive step
   for the induction on $l(w_m)$ does not depend on whether we are in the base case
   or the inductive step for the induction on $m$, we treat these cases together. Introduce 
   the following notation for $x \in \{P, L, A\}$:
   \begin{description}
      \item[$X(n,l)$:] Statement x) holds for all $\beta \in \brm$ with Garside length $\leqslant n$ and
                       length of the final Garside factor $\leqslant l$.
      \item[$X(n,\infty)$: ] $X(n, l)$ for all $l \in \N$. 
   \end{description}
   
   In the case $m=1$ and $w_m = t \in S$ all statements above follow immediately from \cref{lemFirstStep}.
   In other words: \cref{lemFirstStep} $\Rightarrow P(1,1), L(1,1), A(1,1)$

   Suppose by induction that the statements above hold for any positive braid with fewer Garside factors than $\sigma$
   or with $m$ Garside factors and shorter final (i.e. $w_m$) Garside factor than $\sigma$.
   
   For simplicity of notation set $\beta = w_{m-1} w_{m-2} \dots w_1$. Since being a Garside normal form
   can be checked locally (see \cref{propGarsideNormFormLocal}), $\beta$ is in Garside normal form.
   
   First, we prove P):
   
   \begin{lem}
      $m > 1 + P(m-1, \infty) \Longrightarrow P(m, 1)$
   \end{lem}
   \begin{proof} Write $w_m = r \in S$.
   By induction on the number of Garside factors we know that $\pH{k}(F_{\beta}(B)) = 0$ for $k > m-1$. 
   Applying $F_r$, the left handed multiplication formula from \cref{eqnLHMultForm} implies that
   the grading of any indecomposable Soergel bimodule gets shifted up at most by $1$ (compare with
   \cref{lemFsLeqComp}). It follows that $\pH{k}(F_{\sigma}(B)) = 0$ for $k > m$.
   \end{proof}
   
   \begin{lem}
      $l(w_m) > 1 + P(m, l(w_m)-1) + A(m, l(w_m)-1) \Longrightarrow P(m, l(w_m))$
   \end{lem}
   \begin{proof}
   Write $w_m = t u$ with $t \in S$ and $l(u) = l(w_m) - 1 \geqslant 1$.
   By induction on the length of the final Garside factor, we know that the maximal $k$ such that 
   $\pH{k}(F_{u\beta}(B)) \neq 0$ satisfies $k \leqslant m$. Since $t u > u$ in the Coxeter group, 
   it follows that $t \nin \desc{L}(u)$. Again by induction on the length of the final 
   Garside factor, we can apply A) to conclude that no element in $\pi_s^{-1}(t)$ indexes 
   an anchor of $\pH{k}(F_{u\beta}(B))$. \Cref{corNoAnchorMaxPH} shows that $\pH{k+1}(F_{\sigma}(B)) 
   \cong \pH{1}(F_t \pH{k}(F_{u\beta}(B)))$ vanishes.
   \end{proof}

   Next, we will consider L) and A) and prove both $L(m, 1)$ and $A(m, 1)$ at the same time.
   
   \begin{lem}
      $m > 1 + P(m-1, \infty) + L(m-1, \infty) \Longrightarrow L(m, 1) + A(m, 1)$
   \end{lem}
   \begin{proof} Write $w_m = t \in S$.
   \Cref{lemGarsideNormFormChar} shows that $\desc{R}(w_m) \subseteq \desc{L}(w_{m-1})$. Thus
   $t$ lies in $\desc{L}(w_{m-1})$. By induction on the number of Garside factors, we may apply 
   L) to see that there exists $w \in \pi_s^{-1}(t)$ such that $B_{w}$ is an anchor of $F_{\beta}(B)$. 
   Let $k \in \Z$ be maximal such that $\pH{k}(F_{\beta}(B))$ is non-zero. In addition, induction on the number
   of Garside factors applied to P) implies that $k \leqslant m-1$. \Cref{corAnchorMaxPH} shows that
   $\pH{k+1}(F_{\sigma}(B))$ is non-zero and that the indecomposable Soergel bimodules 
   occurring in $\pH{k+1}(F_{\sigma}(B))$ are indexed by elements in $\pi_s^{-1}(t)$. 
   Since $t$ is the only element in $\desc{L}(w_m)$, L) and A) hold in this case.
   \end{proof} 
	
	Then we show the inductive step for $A$ in the induction on $l(w_m)$.
   \begin{lem}
      $l(w_m) > 1 + A(m, l(w_m)-1) \; \Longrightarrow A(m, l(w_m))$.
   \end{lem}
   \begin{proof}
   We show A) by showing its contrapositive. Fix $r \in S \setminus \desc{L}(w_m)$.
   Let $w \in \pi_s^{-1}(r)$ be arbitrary. We want to show that $B_{w}$ is not an anchor 
   of $F_{\sigma}(B)$. Write $w_m = t z$ with $t \in S$ and $l(z) = l(w_m) - 1 \geqslant 1$ 
   in the Coxeter group. It follows that $t \neq r$. As before for $1 \leqslant k \leqslant m_{r, t}$
   denote by  $\prescript{}{t}{\widehat{k}} = t r t \dots$ (resp. $ \widehat{k}_t = \dots trt$) the 
   alternating word in $r$ and $t$ of length $k$ having $t$ in its left (resp. right) 
   descent set. Similarly for $\prescript{}{r}{\widehat{k}}$ and $\widehat{k}_r$.

   \emph{Case 1: $m_{r,t} = 2$ (i.e. $rt = tr$).}
   This implies that $r$ does not lie in the left descent set of $z$. 
   Let $k$ be maximal such that $\pH{k}(F_{z\beta}(B))$ is not zero.
   By induction on the length of the final Garside factor, it follows
   that for all $v \in \pi_s^{-1}(\{r, t\})$ the indecomposable Soergel bimodule
   $B_{v}$ is not an anchor of $\pH{k}(F_{z\beta}(B))$. Therefore 
   \[\Hom_{K^b(\catFixedRightDesc{s})}(F_{z\beta}(B), B_{w}(n-k)[-n])\]
   vanishes for all $n \in \Z$. As there are no edges between $\pi_s^{-1}(t)$ and 
   $w$ in $\Gamma_s$ \cref{lemFirstStep} implies that $F_t B_{w}$ is isomorphic
   to $B_{w}(1)[-1]$. Applying the autoequivalence $F_t$ to both terms of the 
   $\Hom$-space above implies that 
   \[\Hom_{K^b(\catFixedRightDesc{s})}(F_{\sigma}(B), B_{w}(n-k)[-n])\]
   still vanishes for all $n \in \Z$. Using \cref{corNoAnchorMaxPH} for $t$ shows that
   $\pH{k}(F_{\sigma}(B))$ is still the highest non-zero perverse cohomology group.   
   Therefore we see that $B_{w}$ is not an anchor of $F_{\sigma}(B)$.
   
   In the following three cases we have $l < m_{r, t}$ due to $r \nin \desc{L}(w_m)$.
   
   \emph{Case 2: $m_{r, t} \geqslant 3$, $w_m = \prescript{}{t}{\widehat{l}}u$ with $u$ minimal
   in its left $\langle r, t \rangle$-coset, $1\leqslant l < m_{r, t} - 1$ and $l(u) \geqslant 1$.}
   Then $r$ and $t$ are not in $\desc{L}(u)$ and thus by induction on the length of the final Garside 
   factor, for all $v \in \pi_s^{-1}(\{r, t\})$ the indecomposable Soergel bimodule $B_v$ 
   is not an anchor in $F_{u\beta}(B)$. Let $k$ be maximal such  that $\pH{k}(F_{u\beta}(B))$ is non-zero. 
   Therefore \[\Hom_{K^b(\catFixedRightDesc{s})}(F_{u\beta}(B), \bigoplus_{v \in \pi_s^{-1}(\{r, t\})} 
   B_v(n-k)[-n])\] vanishes for all $n \in \Z$. Applying the autoequivalence $F_{\prescript{}{t}{\widehat{l}}}$
   shows that 
   \[\Hom_{K^b(\catFixedRightDesc{s})}(F_{\sigma}(B), \bigoplus_{v \in \pi_s^{-1}(\{r, t\})} 
   F_{\prescript{}{t}{\widehat{l}}} B_v(n-k)[-n])\] 
   still vanishes for all $n \in \Z$.
   From the claim in the proof of \cref{lemBraidStep} we know what each summand in 
   \[\bigoplus_{v \in \pi_s^{-1}(\{r, t\})} F_{\prescript{}{t}{\widehat{l}}} B_v(n-k)[-n]\] 
   looks like and that we may choose $\widetilde{w} \in \pi_s^{-1}(\{r, t\}) \subseteq 
   \lcellFixedRightDesc{s}$ such that $B_w (1)[-1]$ occurs as summand in the first 
   cohomological degree of $F_{\prescript{}{t}{\widehat{l}}} B_{\widetilde{w}}$. 
   In particular there exist sets $I_t \subseteq \pi_s^{-1}(t)$ and $w \nin I_r 
   \subseteq \pi_s^{-1}(r)$ such that $F_{\prescript{}{t}{\widehat{l}}} B_{\widetilde{w}}$ is
   isomorphic to a perverse two-term minimal complex of the following form:
   \[\dots \longrightarrow 0 \longrightarrow \bigoplus_{v \in I_t} B_v \longrightarrow B_w(1) \oplus 
   \bigoplus_{v \in I_r}B_v(1) \longrightarrow 0 \longrightarrow \dots\]
   
  It follows that $B_w$ cannot be an anchor of $F_{\sigma}(B)$ as any non-zero map from 
  $F_{\sigma}(B)$ to $B_w(n-k)[-n]$ for some $n\in \Z$ would induce a non-zero map to 
  $F_{\prescript{}{t}{\widehat{l}}} B_{\widetilde{w}}$ which is a direct summand of 
  \[\bigoplus_{v \in \pi_s^{-1}(\{r, t\})} F_{\prescript{}{t}{\widehat{l}}} B_v(n-1-k)[-n+1]\]
  by post-composition with the class of the following non-zero map in $K^b(\catFixedRightDesc{s})$:
  \resizebox{\textwidth}{!}{$ \begin{xy}
     \xymatrix{
         {\dots} \ar[r] & 0 \ar[r] \ar[d] & 0 \ar[r] \ar[d] 
            & B_w(n-k) \ar[r] \ar[d]^-{ \left( 
               \begin{smallmatrix}
                  \id \\ 0 \\ \vdots \\ 0
               \end{smallmatrix} \right) } 
            & 0 \ar[r] \ar[d] & {\dots} \\
         {\dots} \ar[r] & 0 \ar[r] & {\bigoplus_{v \in I_t} B_v(n-1-k)} \ar[r] 
            & B_w(n-k) \oplus \bigoplus_{v \in I_r }B_v(n-k) \ar[r] & 0 \ar[r] & \dots }
     \end{xy} $ }
   where $B_w(n-k)$ sits in cohomological degree $n$. Indeed, first use repeatedly \cref{corNoAnchorMaxPH} 
   to see that $\pH{k}(F_{\sigma}B)$ still is the highest non-zero perverse cohomology 
   group of $F_{\sigma}B$. Then note that the composition of any representative of the non-zero map 
   $F_{\sigma}(B) \rightarrow B_w(n-k)[-n]$ in $K^b(\catFixedRightDesc{s})$ with the morphism
   of cochain complexes pictured above is non-zero in $C^b(\catFixedRightDesc{s})$. Finally, use the following
   chain of isomorphisms
   \begin{align*}
      & \Hom_{C^b(\catFixedRightDesc{s})}(\pH{k}(F_{\sigma}B), \bigoplus_{v \in \pi_s^{-1}(\{r, t\})} 
        F_{\prescript{}{t}{\widehat{l}}} B_v (n-1)[-n+1]) \\
      &\cong \Hom_{K^b(\catFixedRightDesc{s})}(\pH{k}(F_{\sigma}B), \bigoplus_{v \in \pi_s^{-1}(\{r, t\})} 
        F_{\prescript{}{t}{\widehat{l}}} B_v (n-1)[-n+1]) \\
      &\cong \Hom_{K^b(\catFixedRightDesc{s})}(F_{\sigma}(B), \bigoplus_{v \in \pi_s^{-1}(\{r, t\})} 
        F_{\prescript{}{t}{\widehat{l}}} B_v (n-1)[-n + 1 -k])
   \end{align*}
   where we used the isomorphism from the proof of \cref{lemAnchorEquiv} in the second step and in 
   the first step we assumed that $\pH{k}(F_{\sigma}B)$ is a minimal complex and thus by Soergel's conjecture
   all homotopies between minimal perverse complexes vanish. We conclude 
   that $B_w$ is not an anchor of $F_{\sigma}(B)$.
      
   \emph{Case 3: $m_{r, t} \geqslant 3$, $w_m = \prescript{}{t}{\widehat{l}}$ with 
   $1 < l \leqslant m_{r,t} - 1$.}
   For some $q \in \{r, t\}$ we have $\prescript{}{t}{\widehat{l}} = \widehat{l}_q$. 
   Let $q' \in \{r, t\} \setminus \{q\}$. Let $k$ be maximal such that 
   $\pH{k}(F_{q\beta}(B))$ is non-zero. From the case $w_m = q$ and $m \geqslant 1$ 
   we know that all indecomposable Soergel bimodules occuring in 
   $\pH{k}(F_{q\beta}(B))$ are indexed by elements in $\pi_s^{-1}(q)$.
   
   By \cref{lemBraidStep} there exists $\widetilde{w} \in \pi_s^{-1}(\{r, t\})$ such that 
   $F_{\prescript{}{t}{(\widehat{m_{r, t} - 1})}} B_{\widetilde{w}}$ is isomorphic to
   $B_w(1)[-1]$. Note that $F_{\prescript{}{q}{\widehat{(m_{r, t}-l)}}} B_{\widetilde{w}}$ has
   sources (resp. sinks) indexed by elements in $\pi_s^{-1}(q)$ (resp. $\pi_s^{-1}(q')$) and since 
   each source has non-zero outgoing components of the differential we get (for degree reasons):
   \[ \Hom_{C^b(\catFixedRightDesc{s})}(\pH{k}(F_{q\beta}(B)), F_{\prescript{}{q}{\widehat{(m_{r, t}-l)}}} 
   B_{\widetilde{w}}(n-1-k)[-n+1+k]) = 0 \qquad \forall n \in \Z \]
   
   Now use the following chain of isomorphisms
   \begin{align*}
      & \Hom_{C^b(\catFixedRightDesc{s})}(\pH{k}(F_{q\beta}(B)), F_{\prescript{}{q}{\widehat{(m_{r, t}-l)}}} 
   B_{\widetilde{w}}(n-1-k)[-n+1+k]) \\
      &\cong \Hom_{K^b(\catFixedRightDesc{s})}(\pH{k}(F_{q\beta}(B)), F_{\prescript{}{q}{\widehat{(m_{r, t}-l)}}} 
   B_{\widetilde{w}}(n-1-k)[-n+1+k])\\
      &\cong \Hom_{K^b(\catFixedRightDesc{s})}(F_{q\beta}(B), F_{\prescript{}{q}{\widehat{(m_{r, t}-l)}}} 
   B_{\widetilde{w}}(n-1-k)[-n+1])\\
      &\cong \Hom_{K^b(\catFixedRightDesc{s})}(F_{\sigma}(B), F_{\prescript{}{t}{\widehat{(m_{r, t}-1)}}} 
   B_{\widetilde{w}}(n-1-k)[-n+1])\\
      &\cong \Hom_{K^b(\catFixedRightDesc{s})}(F_{\sigma}(B), B_w(n-k)[-n])\\
   \end{align*}   
   where for the first step one should note that all homotopies between minimal perverse complexes are $0$, in the second step
   the isomorphism from the proof of \cref{lemAnchorEquiv} is used, in the third step the autoequivalence
   $F_{\prescript{}{t}{\widehat{l-1}}}$ is applied and finally the choice of $\widetilde{w}$ comes into play.
   
   This shows that $B_w$ cannot be an anchor of $F_{\sigma}(B)$ as induction applied to A) and \cref{corNoAnchorMaxPH}
   yield that $\pH{k}(F_{\sigma}(B))$ is still the highest non-zero perverse cohomology group.
      
   \emph{Case 4: $m_{r, t} \geqslant 3$, $w_m = \prescript{}{t}{(\widehat{m_{r, t} - 1})}u$ with $u$ minimal
   in its left $\langle r, t \rangle$-coset and $l(u) \geqslant 1$.}
   Then $r$ and $t$ are not in $\desc{L}(u)$ and thus by induction on the length of the final Garside 
   factor, for all $v \in \pi_s^{-1}(\{r, t\})$ the indecomposable Soergel bimodule $B_v$ 
   is not an anchor in $F_{u\beta}(B)$. By \cref{lemBraidStep} there exists 
   $\widetilde{w} \in \pi_s^{-1}(\{r, t\})$ such that 
   $F_{\prescript{}{t}{(\widehat{m_{r, t} - 1})}} B_{\widetilde{w}}$ is isomorphic to
   $B_w(1)[-1]$. Since \[\Hom_{K^b(\catFixedRightDesc{s})}(F_{u\beta}(B), B_{\widetilde{w}}(n-k)[-n])\]
   vanishes for all $n \in \Z$, we may apply the autoequivalence 
   $F_{\prescript{}{t}{(\widehat{m_{r, t} - 1})}}$ to deduce that \[\Hom_{K^b(\catFixedRightDesc{s})}
   (F_{\sigma}(B), B_w(n-k)[-n])\] is zero for all $n \in \Z$. Induction on the length of the Garside factor 
   together with \cref{corNoAnchorMaxPH} imply that $\pH{k}(F_{\sigma}(B))$ is still the highest
   non-zero perverse cohomology group of $F_{\sigma}(B)$. Therefore $B_w$ is not 
   an anchor of $F_{\sigma}(B)$.
   \end{proof}

   Finally, we prove L):
   \begin{lem}
      $l(w_m) > 1 + L(m, l(w_m)-1) + Am, l(w_m)) \Longrightarrow L(m, l(w_m))$
   \end{lem}
   \begin{proof}
   Fix $r \in \desc{L}(w_m)$ and write $w_m = rtz$ with $t \in S$, $l(z) = l(w_m) - 2$ and $z$ possibly trivial.
   Observe that $r \neq t$. We want to show that $B_w$ is an anchor in $F_{\sigma}(B)$
   for some $w \in \pi_s^{-1}(r)$.
   
   \emph{Case 1: $m_{r,t} = 2$ (i.e. $rt = tr$).}
   By induction on the length of the final Garside factor, $B_w$ is an anchor of $F_{rz\beta}(B)$
   for some $w \in \pi_s^{-1}(r)$. Let $k$ be maximal such that $\pH{k}(F_{rz\beta}(B))$ is non-zero.   
   By definition of an anchor, there exists some $n \in \Z$ such that 
   \[\Hom_{K^b(\catFixedRightDesc{s})}(F_{rz\beta}(B), B_w(n-k)[-n])\] does
   not vanish. Applying the autoequivalence $F_t$ and using that $F_t B_w$
   is isomorphic to $B_w(1)[-1]$ by \cref{lemFirstStep} as there are no edges
   between $\pi_s^{-1}(t)$ and $w$ in $\Gamma_s$, it follows that
   \[\Hom_{K^b(\catFixedRightDesc{s})}(F_{\sigma}(B), B_w(n+1-k)[-n-1])\]
   is non-zero. By induction on the length of the Garside factor applied to A, we know that $B_t$
   is not an anchor of $F_{rz\beta}(B)$. Therefore, \cref{corNoAnchorMaxPH} shows that 
   a new non-zero highest perverse cohomology group is not created when applying $F_t$ and 
   thus $B_w$ is an anchor of $F_{\sigma}(B)$.
   
   \emph{Case 2: $m_{r, t} \geqslant 3$, $w_m = \prescript{}{r}{\widehat{l}}u$ with $u$ minimal
   in its left $\langle r, t \rangle$-coset, possibly trivial and $2 \leqslant l \leqslant m_{r, t} - 1$.}
   Let $q \in \{r, t\}$ be such that $\prescript{}{r}{\widehat{l}} = \widehat{l}_q$ and 
   $q' \in \{r, t\} \setminus \{q\}$. By induction on the length of the final Garside factor, there exists some $\widetilde{w} \in \pi_s^{-1}(x)$
   such that $B_{\widetilde{w}}$ is an anchor of $F_{qu\beta}(B)$. Let $k$ be maximal such that 
   $\pH{k}(F_{qu\beta}(B))$ is non-zero. By definition of an anchor, there exists some $n \in \Z$ such that 
   \[\Hom_{K^b(\catFixedRightDesc{s})}(F_{qu\beta}(B), B_{\widetilde{w}}(n-k)[-n])\] does
   not vanish. Applying the autoequivalence $F_{\widehat{(l-1)}_{q'}}$ shows: 
   \[\Hom_{K^b(\catFixedRightDesc{s})}(F_{\sigma}(B), F_{\widehat{(l-1)}_{q'}} B_{\widetilde{w}}(n-k)[-n]) \neq 0 \]
   The claim in the proof of \cref{lemBraidStep} shows that there exist sets $I_t \subseteq \pi_s^{-1}(t)$
   and $I_r \subseteq \pi_s^{-1}(r)$ such that $F_{\widehat{(l-1)}_{q'}} B_{\widetilde{w}}$ is
   isomorphic to a perverse two-term minimal complex of the following form:
   \[ \dots \longrightarrow 0 \longrightarrow \bigoplus_{v \in I_r} B_v 
      \longrightarrow \bigoplus_{v \in I_t} B_v(1) \longrightarrow 0 \longrightarrow \dots \]
   where $\bigoplus_{v \in I_r} B_v$ sits in cohomological degree $0$. Since $u$ is minimal
   in its left $\langle r, t \rangle$-coset and $l < m_{r, t}$, it follows that $t \nin 
   \desc{L}(w_m)$. By A we know that there is no anchor of $F_{\sigma}(B)$ corresponding to $t$.
   Therefore a non-zero map from $F_{\sigma}(B)$ to $F_{\widehat{(l-1)}_{q'}} B_{\widetilde{w}}$ must
   have a non-zero component to a summand $B_w$ for $w \in I_r$ occuring in cohomological degree $0$. 
   Thus post-composition with the non-zero map \\
   \resizebox{\textwidth}{!}{$ \begin{xy}
     \xymatrix{
         {\dots} \ar[r] & 0 \ar[r] \ar[d] & {B_w(n-k) \oplus \bigoplus_{v \in I_r \setminus \{w\}} B_v(n-k)} \ar[r] 
	  \ar[d]^-{ \left( 
               \begin{smallmatrix}
                  \id, & 0, & \dots, & 0
               \end{smallmatrix} \right) }  
            & {\bigoplus_{v \in I_t} B_v(n+1-k)} \ar[r] \ar[d] 
            & 0 \ar[r] \ar[d] & {\dots} \\
         {\dots} \ar[r] & 0 \ar[r] & B_w(n-k) \ar[r] 
            & 0 \ar[r] & 0 \ar[r] & \dots }
     \end{xy} $}
   in $K^b(\catFixedRightDesc{s})$ shows that 
   $\Hom_{K^b(\catFixedRightDesc{s})}(F_{\sigma}(B), B_w(n-k)[-n])$ does not
   vanish. As before, we conclude that $B_w$ is an anchor of $F_{\sigma}(B)$.
   
   \emph{Case 3: $m_{r, t} \geqslant 3$, $w_m = \prescript{}{r}{(\widehat{m_{r,t}})}u$ with $u$ minimal
   in its left $\langle r, t \rangle$-coset and possibly trivial.}
   Let $q \in \{r, t\}$ be such that $\prescript{}{r}(\widehat{m_{r,t}}) = (\widehat{m_{r,t}})_q$ and 
   $q' \in \{r, t\} \setminus \{q\}$. We have $w_m = \prescript{}{r}{(\widehat{m_{r,t}})}_{q} u = 
   \prescript{}{t}{(\widehat{m_{r,t}})}_{q'} u$. By induction on the length of the final Garside 
   factor, we know that there exists $\widetilde{w} \in \pi_s^{-1}(q')$ such that $B_{\widetilde{w}}$ is
   an anchor of $F_{q'u\beta}(B)$. Let $k$ be maximal such that $\pH{k}(F_{q'u\beta}(B))$ is non-zero.  
   By definition of an anchor, there exists some $n \in \Z$ such that 
   \[\Hom_{K^b(\catFixedRightDesc{s})}(F_{q'u\beta}(B), B_{\widetilde{w}}(n-k)[-n])\] does not vanish.
   Since $F_{(\widehat{m_{r,t}-1})_q}B_{\widetilde{w}}$ is isomorphic to $B_w(1)[-1]$ for some 
   $w \in \pi_s^{-1}(r)$ by \cref{lemBraidStep} (due to $(\widehat{m_{r,t}-1})_q = 
   \prescript{}{t}{(\widehat{m_{r,t}-1})}$)  we can apply the autoequivalence 
   $F_{(\widehat{m_{r,t}-1})_q}$ to get a non-zero map from $F_{\sigma}(B)$ to $B_w(n+1-k)[-n-1])$. 
   Induction applied to A together with \cref{corNoAnchorMaxPH} show that $\pH{k}(F_{\sigma}(B))$ 
   is still the highest non-zero perverse cohomology group of $F_{\sigma}(B)$. 
   Therefore $B_w$ is an anchor of $F_{\sigma}(B)$.
   \end{proof}
   
   This concludes the proof of the main theorem.
\end{proof}

From this we will easily deduce the faithfulness of the action in finite type using \cref{lemBrmInjectivity}:

\begin{thm}
   \label{thmActFaithful}
   Assume that $(W, S)$ is a Coxeter system of finite type.
   The action of $\br[(W,S)]$ on $K^b(\catFixedRightDesc{s})$ is faithful.
\end{thm}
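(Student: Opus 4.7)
By \cref{lemBrmInjectivity}, since $(W,S)$ is of finite type, it suffices to show that the induced monoid homomorphism from $\brm[(W,S)]$ into the group of isomorphism classes of autoequivalences of $K^b(\catFixedRightDesc{s})$ is injective. Suppose $\alpha, \beta \in \brm[(W,S)]$ satisfy $F_\alpha \cong F_\beta$; evaluating on $B = \bigoplus_{w \in \lcellFixedRightDesc{s}} B_w$ yields $F_\alpha(B) \cong F_\beta(B)$ in $K^b(\catFixedRightDesc{s})$. The plan is to prove $\alpha = \beta$ by induction on the sum of the word lengths of $\alpha$ and $\beta$ in the simple generators $S$.

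For the base case, suppose $\alpha = 1$. Then $F_\beta(B) \cong B$ is perverse, so $\pH{k}(F_\beta(B)) = 0$ for $k \neq 0$. If $\beta$ were non-trivial, part (i) of \cref{thmGarsideAct} would force the largest $k$ with $\pH{k}(F_\beta(B)) \neq 0$ to equal the Garside length of $\beta$, which is at least $1$, a contradiction. Hence $\beta = 1$; the case $\beta = 1$ is symmetric.

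For the inductive step, assume $\alpha, \beta$ are both non-trivial, with Garside normal forms $w_m^\alpha \cdots w_1^\alpha$ and $w_n^\beta \cdots w_1^\beta$ respectively. Applying part (i) of \cref{thmGarsideAct} to both sides of $F_\alpha(B) \cong F_\beta(B)$ immediately gives $m = n$. Part (ii) then gives $\desc{L}(w_m^\alpha) = \desc{L}(w_m^\beta)$, since the image of $\lcellFixedRightDesc{s}^\sigma$ under $\pi_s$ is an invariant of the top non-vanishing perverse cohomology group, hence of $F_\sigma(B)$ up to isomorphism.

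Pick any $r \in \desc{L}(w_m^\alpha) = \desc{L}(w_m^\beta)$. Since $r$ left-divides both $w_m^\alpha$ and $w_m^\beta$ in the Coxeter group and $\varphi: W \to \brm[(W,S)]$ is multiplicative along reduced expressions, we can factor $\alpha = r\alpha'$ and $\beta = r\beta'$ in $\brm[(W,S)]$, with $\alpha', \beta'$ each of strictly smaller word length. Applying the autoequivalence $E_r$, which is inverse to $F_r$ in $\brcat$, yields $F_{\alpha'} \cong F_{\beta'}$, and the induction hypothesis forces $\alpha' = \beta'$, whence $\alpha = r\alpha' = r\beta' = \beta$. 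The only non-trivial input is \cref{thmGarsideAct}, which supplies the common left divisor $r$ from purely categorical data; the descent using invertibility of Rouquier complexes is then essentially automatic, so the real work has already been done before reaching this theorem.
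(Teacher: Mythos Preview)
Your proof is correct and follows essentially the same approach as the paper. Both arguments reduce to the braid monoid via \cref{lemBrmInjectivity}, use \cref{thmGarsideAct} to read off the Garside length and an element of $\desc{L}(w_m)$ from the top perverse cohomology of $F_\sigma(B)$, peel that simple reflection off using $E_r$, and iterate; the paper phrases this as an algorithm recovering $\sigma$ from the action, while you phrase it as a direct inductive comparison of two braids with isomorphic Rouquier complexes, but the content is identical.
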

\begin{proof}
   Let $\rho: \br[(W, S)] \rightarrow Iso(Aut(K^b(\catFixedRightDesc{s})))$ be the group homomorphism
   corresponding to the action of $\br[(W, S)]$ on $K^b(\catFixedRightDesc{s})$ where 
   $Iso(Aut(K^b(\catFixedRightDesc{s})))$ is the group of isomorphism classes of autoequivalences 
   on $K^b(\catFixedRightDesc{s})$.
   Due to \cref{lemBrmInjectivity} it is enough to show the injectivity for its restriction $\rho^+$
   to the braid monoid $\brm{(W, S)}$. We will show that for any positive braid $\sigma \in \brm{(W, S)}$
   its Garside normal form and thus $\sigma$ itself can fully be recovered from the action 
   of $\sigma$ and its subwords on $K^b(\catFixedRightDesc{s})$. Set $B \defeq \bigoplus_{w 
   \in \lcellFixedRightDesc{s}} B_w$.
   
   Consider $F_{\sigma}(B)$ and its highest non-zero perverse cohomology group $\pH{m}(F_{\sigma}(B))$.
   \Cref{thmGarsideAct} implies that the number of Garside factors of $\sigma$ is $m$.
   Let $\sigma = w_m w_{m-1} \dots w_1$ be the Garside normal form of $\sigma$. To simplify notation,
   set $\beta = w_{m-1} \dots w_1$.
   
   To determine a reduced expression of $w_m$ we will proceed as follows: Let $B_{w}$ be
   an anchor in $\pH{m}(F_{\sigma}(B))$. \Cref{thmGarsideAct} shows that
   $\pi_s(w) = s_1$ lies in $\desc{L}(w_m)$. Write $w_m = s_1 u$ with $l(u) < l(w_m)$. Since the Rouquier complexes
   satisfy up to isomorphism the braid relations, we know: $E_{s_1} F_{\sigma} \cong F_{u\beta}$.
   Now consider the action of $F_{u \beta}$ on $B$. If $\pH{m}(F_{u\beta}(B))$ is zero, then
   $u = 1$, $u\beta$ has a Garside normal form with $m-1$ Garside factors and $w_m = s_1$.
   Otherwise we repeat the argument above to find an element in the left descent set of $u$.   
   After a finite number of steps we have reconstructed a reduced expression $w_m = s_1 s_2 \dots s_k$.
   
   By repeating the whole process we will eventually find all Garside factors of $\sigma$
   and we know their order. Thus we have determined $\sigma$ itself.
\end{proof}

The following result is an immediate consequence as the faithful action from \cref{thmActFaithful}
factors over the $2$-braid group.

\begin{corlab}[Faithfulness of the $2$-braid group in finite type]
   \label{corFaithfulness}
   Let $(W, S)$ be a Coxeter group of finite type.
   For two distinct braids $\sigma \neq \beta \in \br{(W, S)}$ the corresponding Rouquier complexes
   $F_{\sigma}$ and $F_{\beta}$ in the $2$-braid group are non-isomorphic.
\end{corlab}

Since in arbitrary type we can still differentiate the images of different positive braids 
in $\br{(W, S)}$, we get the following known result (the main result of \cite[Theorem 1.1]{Pa}):

\begin{corlab}[Injectivity of the canonical map]
   \label{corCanInject}
   Let $(W, S)$ be a Coxeter group of arbitrary type.
   The canonical morphism $\brm{(W, S)} \longrightarrow \br{(W, S)}$ is injective.
\end{corlab}

\vfill


\printbibliography

\Address

\end{document}